\documentclass[10pt,a4paper]{article}
\usepackage[latin1]{inputenc}
\usepackage{amsmath}
\usepackage{amsfonts}
\usepackage{amssymb}
\usepackage{color}
\newenvironment{proof}{\noindent
PROOF.}{\hfill$\mathbf{\Box}$ \vspace{.5\baselineskip}}
\DeclareMathOperator{\sgn}{sgn}

\setcounter{secnumdepth}{2}

\frenchspacing
\numberwithin{equation}{section}
\title{Uniqueness for Volterra-type stochastic integral equations\\*
\protect\vspace{2cm}
}
\author{Leonid Mytnik  $\mbox{}^{1} $\hspace{2cm}  Thomas S. Salisbury $\mbox{}^{2}$}
\date{}
\begin{document}
\newcounter{archapter}[section]
\newtheorem{theorem}{Theorem}[section]
\newtheorem{lemma}[theorem]{Lemma}
\newtheorem{definition}[theorem]{Definition}
\newtheorem{remark}[theorem]{Remark}
\newtheorem{proposition}[theorem]{Proposition}
\newtheorem{case}{Case}[archapter]
\newtheorem{corollary}[theorem]{Corollary}
\newtheorem{fact}[theorem]{Fact}
\newtheorem{assumption}[theorem]{Assumption}
\newcommand{\gdm}{\hfill\vrule  height5pt width5pt \vspace{.1in}}
\newcommand{\ep}{\epsilon}
\newfont{\msb}{msbm10 scaled \magstep1}
\newfont{\msbh}{msbm7 scaled \magstep1}
\newfont{\msbhh}{msbm5 scaled \magstep1}
\newcommand{\IN}{\mathbb N}
\newcommand{\IP}{\mathbb P}
\newcommand{\IR}{\mathbb R}
\newcommand{\IE}{\mathbb E}
\renewcommand{\Re}{\mbox{\msb R}}
\newcommand{\Zd}{\mbox{\msb Z}}
\newcommand{\Ce}{\mbox{\msb C}}
\newcommand{\Cesm}{\mbox{\msbh C}}
\newcommand{\Te}{\mbox{\msb T}}
\newcommand{\Tesm}{\mbox{\msbh T}}
\newcommand{\Le}{\mbox{\msb L}}
\newcommand{\Lesm}{\mbox{\msbh L}}
\newcommand{\Ee}{\mbox{\msb E}}
\newcommand{\Pe}{\mbox{\msb P}}
\newcommand{\Resm}{\mbox{\msbh R}}
\newcommand{\Resmm}{\mbox{\msbhh R}}
\newcommand{\IZ}{\mathbb Z}
\newcommand{\Rp}{\Re_{+}}
\newcommand{\rmed}{\right|}
\newcommand{\lmed}{\left|}
\newcommand{\lnor}{\left\|}
\newcommand{\rnor}{\right\|}
\newcommand{\lnorm}{\lmed \lmed}
\newcommand{\rnorm}{\rmed \rmed}
\newcommand{\la}{\langle}
\newcommand{\ra}{\rangle}
\newcommand{\lla}{\left\langle}
\newcommand{\rra}{\right\rangle}
\newcommand{\Dd}{\Delta}
\newcommand{\Ddd}{\Delta_{d}}
\newcommand{\Ddl}{\frac{1}{2}\Dd}
\newcommand{\wX}{\widetilde{X}}
\newcommand{\mf}{\mathcal{M}_{\rm f}}
\newcommand{\ctem}{C_{\text{tem}}}
\newcommand{\crap}{C_{\text{rap}}}
\renewcommand{\theenumi}{\alph{enumi}}
\renewcommand{\labelenumi}{(\theenumi)}

\maketitle

\begin{abstract}
We study uniqueness for a class of Volterra-type stochastic integral equations. We focus on the case of non-Lipschitz noise coefficients. 
The connection of these equations to certain degenerate stochastic partial differential equations plays a key role.

\end{abstract}

\vspace*{6.5cm}

\thanks{\noindent 
\today\\
AMS 2000 {\it subject classifications}. 60H15, 60K35. 
 \\
{\it Keywords and phrases}. 
Strong uniqueness, stochastic partial differential equation, stochastic integral equation.\\
{\it Running head}. Uniqueness for Volterra-type equation\\
$\mbox{}^{1}$ Supported in part by  the Israel Science
 Foundation\\
$\mbox{}^{2}$ Supported in part by NSERC
 
}

\pagebreak
\section{Introduction}
\label{sec:intro}

The aim of this paper is to study uniqueness for a Volterra-type stochastic differential equation. Let $0< \alpha<1/2$
and let
$\sigma$ be a H\"older continuous function with exponent $\gamma\in (0,1)$. That is, we assume that there exists $L=L(\gamma)$ such that 
\begin{eqnarray}
\label{eq:sigma_Hold}
|\sigma(x)-\sigma(y)|\leq L|x-y|^{\gamma},\;\; \forall x,y\in \IR. 
\end{eqnarray}

Consider 
 the stochastic integral equation
\begin{eqnarray}
\label{equt:cat6}
  X_t = x_0 +\int_0^t(t-s)^{-\alpha} \mathsf{g}(s)\,ds+\int_0^t (t-s)^{-\alpha}\sigma(X_s) dB_s\,,\;\;t\geq0,
\end{eqnarray}
where $\mathsf{g}$ is a bounded continuous function. 
We will extend the classical
 Yamada-Watanabe strong uniqueness result~\cite{bib:YW71} to the above Volterra-type stochastic integral equation.
 
Existence and uniqueness for non-singular Lipschitz stochastic Volterra equations was shown in~\cite{bib:prot85}. To the best of our knowledge, there are no known
uniqueness results for these equations with non-Lipschitz coefficients. Indeed
a major difficulty 
lies in the absence of any natural semimartingale representation for solutions. However we
will show that some of the 
 methodology developed in~\cite{bib:msp05}, \cite{bib:mp} can be applied in this case.  

Here is the first main result of this paper.

\begin{theorem}
\label{thm:unique1}
Let $\alpha\in (0,1/2)$, and $\sigma$ satisfy~(\ref{eq:sigma_Hold}) for some $\gamma\in (\frac{1}{2(1-\alpha)},1]$. 
Then, for any $x_0\in \IR$,  and bounded continuous $\mathsf{g}$,  there is a pathwise unique solution to the 
 equation ~(\ref{equt:cat6}).  
\end{theorem}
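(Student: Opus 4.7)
The plan is to adapt the Yamada--Watanabe argument to the Volterra setting. Take two solutions $X^1, X^2$ of~\eqref{equt:cat6} on a common filtered probability space, driven by the same Brownian motion $B$ and the same initial condition; since the $\mathsf{g}$-contribution is deterministic and cancels, the difference $D_t := X^1_t - X^2_t$ satisfies
$$D_t = \int_0^t (t-s)^{-\alpha} \bigl[\sigma(X^1_s) - \sigma(X^2_s)\bigr]\, dB_s.$$
The key obstruction is that $D$ is not a semimartingale in $t$, because the Volterra kernel $(t-s)^{-\alpha}$ depends on the upper endpoint, so Ito's formula cannot be applied to $D$ directly. To work around this, for each fixed $c > 0$ I would introduce the genuine continuous $L^2$-martingale
$$N^c_t := \int_0^t (c-s)^{-\alpha} \bigl[\sigma(X^1_s) - \sigma(X^2_s)\bigr]\, dB_s, \quad t \in [0,c],$$
for which $N^c_c = D_c$ and $\langle N^c\rangle_t = \int_0^t (c-s)^{-2\alpha}(\sigma(X^1_s)-\sigma(X^2_s))^2\,ds$. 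Equivalently, $N^c$ is obtained by travelling along the characteristics $\{t+r=c\}$ of the degenerate transport-type SPDE satisfied by the two-parameter field $D_t(r) = \int_0^t (t-s+r)^{-\alpha}[\sigma(X^1_s)-\sigma(X^2_s)]\,dB_s$, $r\ge 0$; this is the SPDE connection referenced in the introduction and in~\cite{bib:msp05},~\cite{bib:mp}.

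Next I would apply Ito's formula to $\phi_n(N^c_t)$, with $\phi_n$ the Yamada--Watanabe $C^2$-approximation of $|\cdot|$ adapted to exponent $\gamma$: $\phi_n \uparrow |\cdot|$, $|\phi_n'| \le 1$, and $\phi_n''(x) \le \frac{2}{n|x|^{2\gamma}} \mathbf{1}_{\{a_n \le |x| \le a_{n-1}\}}$ for $a_n \downarrow 0$ satisfying $\int_{a_n}^{a_{n-1}} u^{-2\gamma}\,du = n$. After establishing a priori $L^p$-bounds on $X^i$ by a standard Volterra--Gronwall argument (so that the stochastic integral term is a true martingale), taking expectation and using~\eqref{eq:sigma_Hold} yields
$$\IE[\phi_n(D_c)] \le \frac{L^2}{n}\, \IE\!\left[\int_0^c (c-t)^{-2\alpha} \frac{|D_t|^{2\gamma}}{|N^c_t|^{2\gamma}}\, \mathbf{1}_{\{a_n \le |N^c_t| \le a_{n-1}\}}\, dt\right].$$

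The heart of the proof lies in estimating this right-hand side, and this is where the hypothesis $\gamma > \frac{1}{2(1-\alpha)}$ becomes decisive: in the classical non-Volterra argument the analogous ratio is identically $1$, whereas here $N^c_t \ne D_t$, and $|D_t|^{2\gamma}/|N^c_t|^{2\gamma}$ can be large on the small-$|N^c_t|$ event. To control it, my plan is to combine (i) a priori Hölder regularity of each $X^i$ of every order strictly below $\tfrac12 - \alpha$ (inherited from the Volterra kernel), which upgrades $\sigma(X^i)$ to Hölder exponent $\gamma(\tfrac12-\alpha)$; (ii) the $L^2$-comparison $\IE[(D_t-N^c_t)^2] \lesssim (c-t)^{1-2\alpha}$ (a direct computation), allowing $N^c_t$ to be treated as a perturbation of $D_t$; and (iii) small-ball/density estimates for $N^c_t$ obtained by conditioning on the past and exploiting the Gaussian-type behaviour of the stochastic integral. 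The numerical condition $\gamma(1-\alpha) > \tfrac12$ is precisely the threshold at which these three ingredients combine into a useful bound --- morally, the regularity of $\sigma(X)$ must beat the singularity of the Volterra kernel. Once the bound has been pushed through to a linear Volterra inequality $\IE[|D_c|] \le C \int_0^c (c-t)^{-\eta} \IE[|D_t|]\, dt$ with $\eta < 1$ (after sending $n \to \infty$), the generalized Gronwall lemma for singular Volterra kernels forces $\IE[|D_c|] \equiv 0$, hence $X^1 = X^2$ a.s. The main obstacle throughout will be step~(iii), namely the delicate bookkeeping of the Yamada--Watanabe cutoffs $a_n, a_{n-1}$ against the joint law of $(D_t, N^c_t)$ on the small-ball event.
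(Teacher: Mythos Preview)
Your plan diverges from the paper's route, and the gap is real rather than cosmetic. The paper does \emph{not} stay with the one-dimensional SIE and the martingale $N^c_t$ along characteristics; instead it embeds the SIE into the degenerate SPDE~\eqref{equt:4}, introduces a genuine spatial variable $x$, mollifies $\tilde X(s,\cdot)$ against $\Phi^m_x=p^\theta_{m^{-1/\alpha}}(x,\cdot)$, and runs the Yamada--Watanabe machinery on $\langle\tilde X_s,\Phi^m_x\rangle$ integrated against a test function $\Psi_s(x)$. The analogue of your ratio problem is then: from $|\langle\tilde X_s,\Phi^{m^{(n)}}_x\rangle|\le a_{n-1}$, conclude $|\tilde X(s,0)|\lesssim a_{n-1}$ (Lemma~\ref{lem:ubnd}). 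This is achieved not by a priori regularity, $L^2$ comparison, or small-ball estimates, but by an \emph{iterative bootstrap} (Theorem~\ref{thm:H1} and Corollary~\ref{cor:ubnd}): near points where $\tilde X$ is small, its H\"older exponent can be successively upgraded via $\xi\mapsto (\xi\gamma+\tfrac12-\alpha)\wedge 1$, and the condition $\gamma>\frac{1}{2(1-\alpha)}$ is exactly what makes this iteration climb past the threshold needed for Lemma~\ref{I3lemma}. Your ingredients (i)--(iii) contain no mechanism for this improvement, and the a priori exponent $\tfrac12-\alpha$ is by itself too weak.

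Separately, your endgame is misaligned with the Yamada--Watanabe structure. In the paper the quadratic-variation term $I_3^{m,n}$ is driven to zero as $n\to\infty$ (Lemma~\ref{I3lemma}); the residual Gronwall-type term in~\eqref{eq:heatGron} comes from spatial integration by parts of $\Delta_\theta$ (Lemma~\ref{lem:conv}(b)), not from an incomplete handling of $I_3$. In your purely one-dimensional framework there is no operator to integrate by parts against, so there is no natural source for a linear Volterra inequality in $\IE[|D_c|]$; either the second-order term vanishes in the limit (which you have not established) or you are stuck. The missing idea is precisely the spatial SPDE lift together with the regularity bootstrap of Section~\ref{sec:proofofimprovement}.
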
 
Note that Yamada-Watanabe result states uniqueness for the above equation 
in the case of $\alpha=0$ for any $\gamma\geq 1/2$. This gives an indication that our result is close to optimal, 
but  we have not succeeded in constructing a counterexample for the case of $\gamma<\frac{1}{2(1-\alpha)}$. 
We believe that the  methods developed in~\cite{bib:bmp} and \cite{bib:mmp} may be useful to tackle 
the non-uniqueness problem.

In fact our motivation for studying the above equation came originally from  the study of strong uniqueness for SPDEs and of catalytic superprocesses in dimension one. Recall that the density of super-Brownian motion with space-time dependent branching rate, in dimension $d=1$, can be represented as a solution to the following SPDE 
\begin{eqnarray} 
\label{equt:3} 
\frac{\partial X_{t}(x)}{\partial t}=\Ddl X_{t}(x)+
\sqrt{\lambda_s(x) X_{t}(x)}\dot{W},\;\;\;t\geq 0,\; x\in \IR. 
\end{eqnarray} 
 Here~$\lambda_s(x)$ maybe interpreted as an
 instantaneous  
rate of branching at the point $x$ at time $s$. If one takes $\lambda_s(x)=1$ and replaces the square root by a more general power, the SPDE 
\begin{eqnarray} 
\label{equt:classicalspde} 
\frac{\partial X_{t}(x)}{\partial t}=\Ddl X_{t}(x)+
|X_{t}(x)|^\gamma \dot{W},\;\;\;t\geq 0,\; x\in \IR. 
\end{eqnarray} 
has been studied extensively. \cite{bib:mp} shows strong uniqueness in \eqref{equt:classicalspde} for $\gamma>\frac34$,
 and \cite{bib:mmp} shows that strong uniqueness fails for $\gamma<\frac34$. 
 Those results are for unconstrained (ie signed) solutions. If solutions are restricted to be positive, as is the case for
 the density of super-Brownian motion, then \cite{bib:bmp} adds an immigration term and show that strong uniqueness
 fails for $\gamma<\frac12$. However altogether there is 
still no good understanding of uniqueness/non-uniqueness problem in  the range $\gamma\in(0,\frac34]$.

One may try to narrow that gap by considering a smoother process, namely {\it catalytic super-Brownian motion}. It will turn out that the analogue of the SPDE \eqref{equt:3} does not make sense, but that closely related SPDEs do, and lead naturally to the stochastic integral equation \eqref{equt:cat6}.

The study of super-Brownian motion with 
$\lambda_s(x)dx$ replaced by a singular measure $\rho_s(dx)$ was initiated 
in~\cite{bib:dawfle},~\cite{bib:dfr91},~\cite{bib:dawfle92}. 
  This pair $(\rho, 
 X)$ serves  as a model of a chemical (or biological) reaction 
 of two substances, called the {\it catalyst} and {\it reactant}. The 
 branching of the particles in the $X$ population ({\it reactant}) 
 occurs  only in the presence of {\em  catalyst} 
 $\rho$. 
 More specifically, $X$ is the super-Brownian motion 
 whose branching rate at time $t$ in the space element $dx$ is given by 
 $\rho_{t}(dx)$.  For $\rho$ the Dirac measure, an elegant  approach for studying the catalytic process was introduced 
 in~\cite{bib:fleleg95}. This approach was later extended  to a more general catalyst (see~\cite{bib:mv05}). 
The relation of catalytic super-Brownian motion to SPDEs was presented in~\cite{bib:z05}. However, in
the case where $\rho$ is the Dirac measure,  the catalytic SBM  cannot be rigorously described as a solution to an SPDE.
As we will see, there is a degenerate SPDE that is closely related to \eqref{equt:cat6}.

Let $\rho=\delta_0$. The process $X$ 
 makes non-trivial sense only in dimension
 $d=1$,  since only then do the paths of underlying Brownian particles hit the point
 catalyst. Before describing the corresponding martingale problem, it is 
 necessary to define the local time of a superprocess at point the $x=0$.   
 
At a heuristic level the 
 local time $l^0_{t}$ of a measure-valued process $X$ at the point $x=0$  
 is a non-decreasing real-valued process such that 
$$
l^0_{t}= \int_{0}^{t}\int_{\Resm}\delta_0(y)
 X_{s}(dy)\,ds,
$$
where $\delta_0$ is the Dirac delta 
 function. The precise definition includes using an approximate delta 
 function instead of $\delta_0$ and passing to the limit.
Then we can write the martingale problem for  super-Brownian motion with a point catalyst 
  at $x=0$ as 
$$
\label{equt:cat1}
\left\{
\begin{aligned}
 &X_{t}(\phi)=
 X_{0}(\phi)+\int_{0}^{t}
 X_{s}(\Delta\phi/2)\, ds+
 M_{t}(\phi), \forall \phi\in {\cal D}(\Delta),\\
&\text{where $M_{t}(\phi)$ 
 is a continuous square integrable ${\cal F}_{t}$-martingale} \\
&\text{with $\langle M(\phi)\rangle_{t}=
 \phi(0)^{2}  l^0(t)$ and  $M_{0}(\phi)=0$.}
\end{aligned}
\right.
$$

If we pretend that the measure $l^0(ds)$ is absolutely continuous with respect to Lebesgue 
measure, that is,
\begin{eqnarray}
\label{cat2}
 l^0_{t} = \int_{0}^{t} X_{s}(0)\,ds, 
\end{eqnarray}
and 
$X_{t}(0)$ is bounded,
then it would be easy to derive that $X_t(\cdot)$ is a solution to the following 
 degenerate SPDE written in
a  mild form:
\[ X_t(x) = \int_{\Resm} p_t(x-y)X_0(dy)
 +\int_0^t p_{t-s}(x)\sqrt{X_s(0)} dB_s\,,\]
where $p_t(x)$ is a transition density of Brownian motion (see~\cite{bib:z05} for 
  related results).  
Set  $x=0$ to get the following stochastic integral equation (SIE)
\begin{eqnarray}
\label{equt:cat4}
  X_t(0) = \int_{\Resm} p_t(y)X_0(dy) +\int_0^t \frac{1}{\sqrt{2\pi}}(t-s)^{-1/2}\sqrt{X_s(0)} dB_s\,.
\end{eqnarray}
However the assumption~(\ref{cat2}) is false -- the local time $l^0(ds)$ is singular 
 with respect to Lebesgue measure (see~\cite{bib:dawfle94},\cite{bib:dflm95}). 
In fact $X_t(dx)$ does not have a density at the point of catalyst $x=0$ 
and hence we do not expect there to be a solution to~(\ref{equt:cat4}) in the ordinary sense. But what we 
may ask about is the feasible parameters $\alpha$ and $\gamma$ such that there is a solution to the 
 following analogous SIE
\begin{eqnarray}
\label{equt:cat5}
  X_t(0) =  x_0 +\int_0^t (t-s)^{-\alpha}\lambda|X_s(0)|^\gamma dB_s\,,
\end{eqnarray}
where $\lambda\in\IR$. 
If we replace $|X_s(0)|^\gamma$ inside the stochastic integral, by a general $\gamma$-H\"older continuous function $\sigma(X_s(0))$ , we arrive at the equation~\eqref{equt:cat6}, uniqueness for which is the main concern of the
current paper. 

The connection between our SIE and SPDEs is more than simply an analogy or heuristic. In fact, some of
our arguments rely on rewriting the SIE in terms of an SPDE that can be thought of as the $\gamma>\frac12$ version of
a ``catalytic Bessel process''. In the particular case of $\gamma=1/2$, we can, in fact, 
show that there is at most one non-negative
solution of the equation~(\ref{equt:cat5}) {\it for any} $\alpha\in (0,1/2)$. Moreover, in our second main result, we
establish weak uniqueness for non-negative solutions of such equations. 
\begin{theorem}
\label{thm:weakUniq}
Assume that $\alpha\in (0,1/2)$, and $\lambda\in\IR$. 
Then for any $x_0>0$ and bounded non-negative continuous $\mathsf{g}$, there exists at most one weak non-negative solution to 
\begin{eqnarray}
\label{equt:cat_pos}
  X_t=  x_0 +\int_0^t(t-s)^{-\alpha} \mathsf{g}(s)\,ds+\int_0^t  (t-s)^{-\alpha}\lambda\sqrt{|X_s|} dB_s\,,\;\;t\geq 0. 
\end{eqnarray}
\end{theorem}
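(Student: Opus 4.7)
The plan is to exploit the connection between \eqref{equt:cat_pos} and the degenerate ``catalytic Bessel''-type SPDE hinted at in the introduction, and then run a log-Laplace/duality argument tailored to the square-root coefficient. Pathwise uniqueness is unavailable here (Theorem~\ref{thm:unique1} requires $\gamma>1/(2(1-\alpha))$, which for $\gamma=1/2$ forces $\alpha=0$), so Yamada--Watanabe cannot be invoked and uniqueness in law has to be established directly.

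First I would associate to any weak non-negative solution $X$ of \eqref{equt:cat_pos} a function- or measure-valued process $U_t$ on $\IR$ of the form
\[
U_t(x) = \int p_t(x-y)\,U_0(dy) + \int_0^t q_{t-s}(x)\,\mathsf{g}(s)\,ds + \int_0^t q_{t-s}(x)\,\lambda\sqrt{X_s}\,dB_s,
\]
with a kernel $q_t$ chosen so that $q_t(0)=c_\alpha t^{-\alpha}$ and so that $U_t(0)=X_t$ reproduces \eqref{equt:cat_pos}. For $\alpha=1/2$ this is just the Gaussian heat kernel (the situation genuinely corresponding to catalytic super-Brownian motion); for general $\alpha\in(0,1/2)$ one needs a substitute, either a subordinated heat kernel or one derived from a time-fractional operator. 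The key structural feature is that the noise coefficient of $U$ is supported at $x=0$ and proportional to $\sqrt{U_t(0)}$, making $U$ a ``point-catalytic'' superprocess-like object.

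I would then set $v_t(\phi):=-\log\IE[\exp(-\langle U_t,\phi\rangle)]$ and, using the continuous-state branching structure inherent in the $\sqrt{\cdot}$ coefficient, derive a closed deterministic evolution equation for $v$, schematically
\[
\partial_t v_t(x) = \tfrac12\Delta v_t(x) - \tfrac12\lambda^2 v_t(0)^2 \delta_0(x) + \mathsf{g}(t)\delta_0(x),
\]
or a time-fractional variant when $\alpha\neq1/2$. Restriction to $x=0$ reduces this to a Volterra integral equation for $v_t(0)$ with a locally Lipschitz right-hand side, which is uniquely solvable by a standard Gronwall/fixed-point argument. Well-posedness of this equation determines the one-dimensional marginals of $U_t$; iterating via the Markov property of $U$ pins down all finite-dimensional distributions of $(U_t)_{t\geq0}$, and since $X_t=U_t(0)$ is a measurable functional of $U$, weak uniqueness of $X$ follows.

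The main obstacle I anticipate is making the evaluation at $x=0$ rigorous, because $\delta_0$ is not a legitimate test function for a general measure-valued process. This will require approximating $\delta_0$ by bump functions $\phi_\varepsilon$, deriving the log-Laplace equation at the level of $\phi_\varepsilon$, and then passing to the limit using moment bounds on $\sqrt{X_s}$ together with regularity of $U_t$ away from the catalyst. A secondary difficulty is the fractional time kernel $(t-s)^{-\alpha}$ for $\alpha\neq 1/2$: it forces either a non-standard diffusion kernel or a time-fractional operator in the SPDE lift, so one likely has to prove a bespoke uniqueness result for the resulting log-Laplace evolution equation rather than quote one directly from the catalytic super-Brownian motion literature.
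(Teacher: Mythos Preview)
Your overall strategy---lift the SIE to a point-catalytic SPDE and establish weak uniqueness via a log-Laplace/duality argument, then pull the result back to the SIE---is exactly the paper's route (Theorem~\ref{thm:unique_weak_SPDE} plus the correspondence from Lemma~\ref{thm:nonLipschitzexistence}).

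The one substantive gap is your construction of the lift for general $\alpha\in(0,1/2)$. Your suggestions (subordinated heat kernel, time-fractional operator) head in the wrong direction and would not give a Markov process in the usual sense, undermining the iteration step. The paper's insight is to degenerate in \emph{space}, not in time: for $\theta>0$ set $\Delta_\theta=\tfrac{2}{(2+\theta)^2}\partial_x\bigl(|x|^{-\theta}\partial_x\bigr)$ and choose $\theta$ so that $\alpha=\tfrac{1}{2+\theta}$. The fundamental solution then satisfies $p^\theta_t(0,0)=c_\theta t^{-\alpha}$, reproducing the Volterra kernel at $x=0$, while the lifted process $X(t,\cdot)$ solves the genuine time-local SPDE~\eqref{equt:4_weak} and is Markov. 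The log-Laplace equation becomes the mild equation $u(t,x)=S_t\phi(x)-\tfrac12\int_0^t p^\theta_{t-s}(x)u(s,0)^2\,ds$, whose well-posedness is handled by adapting the Dawson--Fleischmann argument for the single-point-catalyst case. On your approximation worry: rather than replacing $\delta_0$ by bump functions, the paper smooths the dual test function with the semigroup, pairing $X_t$ against $S_\epsilon U^\phi_{T-t}$, carries out the It\^o computation exactly at that level, and sends $\epsilon\downarrow0$ using continuity of $X_t(\cdot)$ and $U^\phi_t(\cdot)$; this commutes with $\Delta_\theta$ and avoids the issue you flagged.
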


\paragraph{Organization of the paper} In Section~\ref{sec:2}, we first 
prove existence results for our equations. Then,
 in Proposition~\ref{prop:2}, we treat the case of $\gamma=1$ of Theorem~\ref{thm:unique1}.
 In the same section, we introduce the SPDE analogues of equations~\eqref{equt:cat6} and \eqref{equt:cat_pos}, and 
  state 
 corersponding uniqueness Theorems~\ref{thm:uniquekunbounded}, \ref{thm:unique_weak_SPDE}. In 
  Section~\ref{section:weak_uniqueness}, Theorems~\ref{thm:unique_weak_SPDE} and~\ref{thm:weakUniq} are proved. 
 The rest of the paper, except Section~\ref{sec:smoothkernels} is devoted to the proof of 
 Theorem~\ref{thm:uniquekunbounded}, 
 from which Theorem~\ref{thm:unique1}, is an immediate consequence. In Section~\ref{sec:smoothkernels}, the 
  uniqueness for equations with kernels smoother than $(t-s)^{-\alpha}$ is considered.

\section{Existence and background} 
\label{sec:2}
Our first goal is to construct the solution to~(\ref{equt:cat6}).
In fact, we will prove existence of a solution to the more 
general equation:
\begin{eqnarray}
\label{equt:21_1}
  X_t =  h(t) +\int_0^t (t-s)^{-\alpha}\sigma(X_s) \,dB_s\,,
\end{eqnarray}
where $h$ is a continuous function. 

Before we start dealing with the above questions, we introduce some notation, which will
be used throughout this work.  We write $C(\IR)$ for the space of continuous
functions on $\IR.$ A superscript $k$ (respectively $\infty$) indicates that
functions are in addition $k$ times (respectively infinitely many times) continuously
differentiable. A subscript $b$ (respectively $c$) indicates that they are also
bounded (respectively have compact support). We also define tempered norms
\begin{equation*}
||f||_{\lambda,\infty}:=\sup_{x \in \IR} |f(x)| e^{-\lambda |x|},
\end{equation*}
set $$\ctem:=\{f \in C(\IR) , ||f||_{\lambda,\infty}
< \infty  \text{ for every } \lambda >0\}$$ and endow it with the topology induced
by the norms $||\cdot ||_{\lambda,\infty}$ for $\lambda>0.$ That is,
$f_n\to f$ in $\ctem$ iff $\lim_{n\to\infty}\Vert
f-f_n\Vert_{\lambda,\infty}=0$ for all $\lambda>0$. 
Similarly we define  $$\crap:=\{f \in C(\IR) , ||f||_{\lambda,\infty}
< \infty  \text{ for every } \lambda <0\}$$ and endow it with the topology induced
by the norms $||\cdot ||_{\lambda,\infty}$ for $\lambda<0.$
$\ctem^k$ (respectively $\crap^k$) denotes collection of functions in  $\ctem$ (respectively in $\crap$) which  
are in addition $k$ times continuously
differentiable with all the derivatives in  $\ctem$ (respectively in $\crap$). As before $k$ can be equal to $\infty$. 

For
$I\subset \IR_+$, let
$C(I,E)$ be the space of all continuous functions on $I$ taking values in
a topological space $E$, endowed with the topology of uniform convergence
on compact subsets of $I$.  In particular, $X\in C(\IR_+,\ctem)$ denotes a function $X_t(x)$ with $X_t\in \ctem$ varying continuously
with $t$. In this context we will use either the notation $X(t,x)$ or $X_t(x)$, depending on which is more convenient.
We will also denote by~$\ctem^+$ the collection of non-negative functions in $\ctem$. Let 
$\mf=\mf(\IR)$ be the space of finite measures on $\IR$ endowed with weak topology. Throughout the paper
$c_i$ and $c_{i.j}$  will denote fixed positive constants, while $C$ and $c$ will
denote positive constants which may change from line to line.

Now we return to the equation~\eqref{equt:21_1}
First, let us treat the case of Lipschitz $\sigma$  (by this, we  prove  Theorem~\ref{thm:unique1} for the case of $\gamma=1$): 
\begin{proposition}
\label{prop:2}
Let $\sigma$ be a
continuous Lipschitz function. 
Assume $0<\alpha<\frac12$ and $h\in C(\IR_+,\IR)$.
Then there exists a unique strong solution  $X$ to~\eqref{equt:21_1} in $C(\IR_+,\IR)$. Moreover, for any $p>0$, $T>0$, there exists 
a constant $c_{\ref{equt:22_3}}=c_{\ref{equt:22_3}}(p,T,\sigma)<\infty$ such that 
\begin{eqnarray}
\label{equt:22_3}
\sup_{0\leq s\leq T} \IE\left[ \left|X_s\right|^p\right] 
&<& c_{\ref{equt:22_3}}(p,T,\sigma).   
\end{eqnarray}
\end{proposition}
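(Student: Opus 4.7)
The plan is a standard Picard iteration adapted to the singular Volterra kernel, with the restriction $\alpha<1/2$ used precisely so that $\int_0^t(t-s)^{-2\alpha}\,ds<\infty$ makes the stochastic integral $L^2$-controllable.

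First I would set $X^{(0)}_t:=h(t)$ and define inductively
\[
 X^{(n+1)}_t = h(t)+\int_0^t (t-s)^{-\alpha}\sigma(X^{(n)}_s)\,dB_s.
\]
To control the iterates, fix $p\ge 2$ and let $f_n(t):=\IE|X^{(n+1)}_t-X^{(n)}_t|^p$. By the BDG inequality,
\[
 f_n(t)\le C_p\,\IE\Bigl(\int_0^t(t-s)^{-2\alpha}\bigl(\sigma(X^{(n)}_s)-\sigma(X^{(n-1)}_s)\bigr)^2\,ds\Bigr)^{p/2}.
\]
Applying H\"older's inequality with the finite measure $d\mu_t(s)=(t-s)^{-2\alpha}\mathbf{1}_{[0,t]}(s)\,ds$ (whose total mass is $\frac{t^{1-2\alpha}}{1-2\alpha}$), together with the Lipschitz assumption on $\sigma$, yields
\[
 f_n(t)\le C(p,T,L)\int_0^t(t-s)^{-2\alpha}f_{n-1}(s)\,ds.
\]

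Next I would iterate this Volterra-type inequality. Since $2\alpha<1$, the $n$-fold convolution of the kernel $(t-s)^{-2\alpha}$ is, up to a Beta-function constant, of order $t^{n(1-2\alpha)}/\Gamma(n(1-2\alpha)+1)$, which is summable. Combined with the fact that $f_0$ is bounded on $[0,T]$ (a direct BDG computation using only $|\sigma(h(s))|\le|\sigma(0)|+L\|h\|_{\infty,T}$), this produces $\sum_n\sup_{t\le T}f_n(t)^{1/p}<\infty$. Hence $(X^{(n)}_t)$ is Cauchy in $L^p$ uniformly in $t\in[0,T]$, with limit $X_t$; passing to the limit in the defining equation (using the same H\"older/BDG bound to control the stochastic integral on both sides) shows $X$ solves \eqref{equt:21_1}. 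To obtain sample-path continuity and complete the statement $X\in C(\IR_+,\IR)$, I would estimate, for $0\le t_1<t_2\le T$,
\[
 \IE|X_{t_2}-X_{t_1}|^p\le C\bigl(|h(t_2)-h(t_1)|^p+A_1+A_2\bigr),
\]
where $A_1$ comes from $\int_0^{t_1}\bigl[(t_2-s)^{-\alpha}-(t_1-s)^{-\alpha}\bigr]\sigma(X_s)\,dB_s$ and $A_2$ from $\int_{t_1}^{t_2}(t_2-s)^{-\alpha}\sigma(X_s)\,dB_s$; both are estimated by BDG plus direct kernel integration to give $A_1+A_2\le C(p,T)|t_2-t_1|^{(1-2\alpha)p/2}$, so Kolmogorov's criterion gives H\"older-continuous modification as soon as $p$ is chosen large enough.

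Pathwise uniqueness is immediate from the same mechanism: if $X$ and $Y$ are two solutions with finite $p$-th moments, then $g(t):=\IE|X_t-Y_t|^p$ satisfies $g(t)\le C\int_0^t(t-s)^{-2\alpha}g(s)\,ds$, and the generalized Gronwall inequality for weakly singular kernels forces $g\equiv 0$. Finally, the moment bound \eqref{equt:22_3} follows by the same route: using $|\sigma(x)|\le|\sigma(0)|+L|x|$, one gets $\IE|X_t|^p\le C(1+|h(t)|^p)+C\int_0^t(t-s)^{-2\alpha}\IE|X_s|^p\,ds$, and another application of the generalized Gronwall inequality yields the desired uniform bound on $[0,T]$.

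The only delicate step is the Volterra-Gronwall estimate for the iterated singular kernel; everything else is routine BDG/H\"older bookkeeping, and the requirement $\alpha<1/2$ enters exactly where we need $\int_0^T (t-s)^{-2\alpha}\,ds$ to be finite. The case $p\in(0,2)$ in \eqref{equt:22_3} then follows from the $p=2$ bound by Jensen's inequality.
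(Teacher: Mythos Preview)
Your approach is essentially the same as the paper's: Picard iteration, BDG plus H\"older against the measure $(t-s)^{-2\alpha}\,ds$, a Volterra--Gronwall iteration (the paper cites Dalang's Lemma~15 where you spell out the $n$-fold convolution), and Kolmogorov for continuity. One point to tighten in the uniqueness step: you write ``two solutions with finite $p$-th moments,'' but the proposition asks for uniqueness in $C(\IR_+,\IR)$, and an arbitrary continuous solution is not known a priori to have moments; the paper handles this by localizing with $T_K=\inf\{t:|X^1_t|\vee|X^2_t|>K\}$ before applying the Gronwall argument, and you should do the same.
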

\begin{proof}
We will use the standard Picard scheme. Let 
\begin{eqnarray}
\nonumber
X^0_t&=& h(t) \\
\label{equt:21_3}
X^{n+1}_t&=& h(t) + \int_0^t (t-s)^{-\alpha}\sigma(X^n_s) \,dB_s,\;\; n\geq 0.  
\end{eqnarray}
Note that since $\sigma$ is Lipschitz it also satisfies a 
linear growth bound,  
that is,  
there exists a constant $c_{\ref{growthcond}}$ such that
\begin{equation}
\index{linear growth condition}
\label{growthcond}
|\sigma(x)| \leq c_{\ref{growthcond}} (1 + |x|),\;\;\;\forall x\in\IR.
\end{equation}
First, let us prove by induction that $X^n$ is well defined for all $n$. 
In what follows, we fix an arbitrary $T>0$. Assume inductively that $X^n$ is a well defined 
adapted process and
\begin{eqnarray}
\label{equt:21_2}
\sup_{0\leq t\leq T} \IE\left[ \left|X^n_t\right|^2\right] <\infty. 
\end{eqnarray}
Then, by  using the growth  condition~\eqref{growthcond}, one can immediately get 
that the stochastic integral in~\eqref{equt:21_3} is well defined and hence $X^{n+1}$ is well defined, and 
moreover,
\begin{eqnarray}
\nonumber
\sup_{0\leq t\leq T} \IE\left[ \left|X^{n+1}_t\right|^2\right]&\leq& 
2\sup_{0\leq t\leq T}h(t)^2 + \frac{4}{1-2\alpha}T^{1-2\alpha} c_{\ref{growthcond}}^2
 \sup_{0\leq t\leq T}\left(1+\IE\left[ \left|X^n_t\right|^2\right]\right)
    \\
\label{equt:21_4}
&<&\infty.   
\end{eqnarray}
So, by induction, we immediately get that $X^n$ is well defined for all $n$, and~\eqref{equt:21_2} holds for all $n$. 
Similarly, by using Burkholder-Gundy-Davis and H\"older inequalities, one can show that, for any $p\geq 2$, there exists 
a constant $c_{\ref{equt:21_5}}=c_{\ref{equt:21_5}}(p, c_{\ref{growthcond}})<\infty$
such that
\begin{eqnarray*}
\nonumber
\sup_{0\leq s\leq t} \IE\left[ \left|X^{n+1}_s\right|^p\right]&\leq& 
c_{\ref{equt:21_5}} \left(
\sup_{0\leq s\leq T}h(t)^p \right. \\
\nonumber
&&\left. \mbox{} + \int_0^t (t-s)^{-2\alpha}
 \left(1+\sup_{0\leq u\leq s}\IE\left[ \left|X^n_u\right|^p\right]\right)\,ds\right)
    \\
\label{equt:21_5}
&<&\infty.   
\end{eqnarray*}
By this, and by the extension of Gronwall's lemma (see Lemma~15 in~\cite{bib:dal99}), we get that, in fact, there exists 
a constant $c_{\ref{equt:21_6}}=c(p, c_{\ref{growthcond}},T)<\infty$ such that  
\begin{eqnarray}
\label{equt:21_6}
\sup_{n\geq 0}\sup_{0\leq s\leq T} \IE\left[ \left|X^{n}_s\right|^p\right] &\leq & c_{\ref{equt:21_6}}.   
\end{eqnarray}
Now in order to show that the sequence $\{X^n_t\}_{n\geq 0}$ converges in $L^p$, define
$$V^n_t=  \sup_{0\leq s\leq t} \IE\left[ \left|X^{n+1}_s-X^n_s\right|^p\right]. $$
Since $\sigma$ is Lipschitz function, we conclude similarly to~\eqref{equt:21_5} that there exists a constant 
$c_{\ref{equt:22_1}}=c_{\ref{equt:22_1}}(p,c_{\ref{growthcond}})<\infty$ such that
\begin{eqnarray}
V^{n+1}_t&\leq& 
c_{p,\ref{equt:22_1}} \int_0^t (t-s)^{-2\alpha}
V^n_s\,ds. 
\label{equt:22_1}   
\end{eqnarray}
Since, by \eqref{equt:21_6}, $\sup_{0\leq t\leq T} V^0_t<\infty$, we again 
get by the extension of Gronwall's lemma (see Lemma~15 in~\cite{bib:dal99}) that $\{V^n_t\}_{n\geq 0}$ converges to $0$ 
uniformly on $[0,T]$. This inmplies that there exists $X_t$ such that $\{X^n_t\}_{n\geq 0}$ converges to $X$ in $L^p$ 
uniformly on $[0,T]$. It is easy to check that $X$ has a jointly measurable version, and that $X$, 
in fact,  satisfies~\eqref{equt:21_1} for a.e. $t$. The existence of continuous in time version 
of the process follows by standard application of Kolmogorv continuity 
criterion and is left to the reader. As we choose the continuous version of the process we get that $X$ satisfies 
 \eqref{equt:21_1} for all $t$. 

To prove uniqueness, let $X^1_t$ and $X^2_t$ solve~\eqref{equt:21_1}. Suppose $|\sigma(x)-\sigma(y)|\le c|x-y|$. For $K>0$, let $T_K$ be the first time $t$ that either of $|X^i_t|>K$. Set 
$$
m_K(t)=\sup_{s\le t}\IE(X^1_{s\land T_K}-X^2_{s\land T_K})^2]\le 4K^2<\infty.
$$
Then for $s\le t$, 
\begin{align*}
\IE[(X^1_{s\land T_K}-X^2_{s\land T_K})^2]
&= \IE[\int_0^{s\land T_K} (s-q)^{-2\alpha}(\sigma(X^1_{q})-\sigma(X^2_{q}))^2\,dq]\\
&\le c\IE[\int_0^{s} (s-q)^{-2\alpha}(X^1_{q\land T_K}-X^2_{q\land T_K})^2\,dq]\\
&\le cm_K(t)\int_0^{s} (s-q)^{-2\alpha}\,dq=cm_K(t)\frac{t^{1-2\alpha}}{1-2\alpha}.
\end{align*}
Therefore $m_K(t)\le cm_K(t)\frac{t^{1-2\alpha}}{1-2\alpha}$, from which we conclude that $m_K(t)=0$ on some interval $[0,\epsilon]$. Iterating the argument now shows that $m_K(t)=0$ for all $t\ge 0$, and sending $K\to\infty$ implies the desired result.

As for \eqref{equt:22_3}, it  follows immediately by~\eqref{equt:21_6}.
\end{proof}
\begin{remark}
\label{rem:22_1}
Note that the constant $c_{\ref{equt:22_3}}$ depends on $\sigma$ only through the constant $c_{\ref{growthcond}}$. 
\end{remark}

We now turn to the non-Lipschitz case.
\begin{lemma}
\label{cor:wkexistenceforSIE}
Let $\sigma$ be
continuous and satisfy the growth bound
(\ref{growthcond}). Assume $0<\alpha<\frac12$ and  and $h\in C(\IR_+,\IR)$. Then there exists a weak solution 
$X$ to~\eqref{equt:21_1} in $C(\IR_+,\IR)$ and 
\begin{eqnarray}
\label{equt:23_3}
\sup_{0\leq s\leq T} \IE\left[ \left|X_s\right|^p\right] 
&<& c_{\ref{equt:23_3}}(p,T,\sigma).   
\end{eqnarray} 
\end{lemma}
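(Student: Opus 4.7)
The natural plan is an approximation scheme: replace $\sigma$ by smooth Lipschitz approximations, invoke Proposition~\ref{prop:2} to get strong solutions with uniform moment bounds, establish tightness, and extract a weak limit.

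More concretely, I would choose $\sigma_n\in C^\infty_b$ with $\sigma_n\to\sigma$ locally uniformly and satisfying the same linear growth bound \eqref{growthcond} with a common constant $c_{\ref{growthcond}}$ (for instance by mollification followed by truncation outside a growing ball, taking care to preserve the bound). By Proposition~\ref{prop:2}, for each $n$ there is a unique strong solution $X^n$ to \eqref{equt:21_1} with $\sigma$ replaced by $\sigma_n$, defined on a fixed filtered probability space carrying $B$. Crucially, by Remark~\ref{rem:22_1} the constant in \eqref{equt:22_3} depends on $\sigma_n$ only through $c_{\ref{growthcond}}$, so for any $p\ge 2$ and $T>0$,
\begin{equation*}
\sup_n\,\sup_{0\le s\le T}\IE\bigl[|X^n_s|^p\bigr]\le c_{\ref{equt:22_3}}(p,T,c_{\ref{growthcond}})<\infty.
\end{equation*}

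Next I would establish tightness of $\{X^n\}$ in $C(\IR_+,\IR)$ via Kolmogorov's criterion. Writing, for $0\le s<t\le T$,
\begin{equation*}
X^n_t-X^n_s=\bigl(h(t)-h(s)\bigr)+\int_0^s\bigl[(t-q)^{-\alpha}-(s-q)^{-\alpha}\bigr]\sigma_n(X^n_q)\,dB_q+\int_s^t (t-q)^{-\alpha}\sigma_n(X^n_q)\,dB_q,
\end{equation*}
the Burkholder--Davis--Gundy inequality combined with the uniform $L^p$ bound and the linear growth estimate gives, after a routine computation of the deterministic $L^2$-norm of the kernel differences,
\begin{equation*}
\IE\bigl[|X^n_t-X^n_s|^p\bigr]\le C\bigl(|h(t)-h(s)|^p+(t-s)^{p(1/2-\alpha)}\bigr).
\end{equation*}
Choosing $p$ large enough that $p(1/2-\alpha)>1$ (possible since $\alpha<1/2$), Kolmogorov's criterion yields tightness of the laws of $X^n$ on $C([0,T],\IR)$ for every $T$, hence on $C(\IR_+,\IR)$.

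Then I would pass to the limit. By tightness of $(X^n,B)$ and the Skorokhod representation theorem, along a subsequence there is a common probability space supporting processes $(\tilde X^n,\tilde B^n)$ with the same joint laws as $(X^n,B)$ and converging almost surely uniformly on compact time intervals to some $(\tilde X,\tilde B)$, with $\tilde B$ a Brownian motion with respect to the natural filtration of $(\tilde X,\tilde B)$. Each $\tilde X^n$ satisfies the SIE driven by $\tilde B^n$ with coefficient $\sigma_n$. The one genuine obstacle is passing to the limit in the stochastic integral, because the kernel $(t-s)^{-\alpha}$ is singular at $s=t$; I would handle this by an $L^2$ estimate using It\^o isometry and the uniform moment bound, splitting the domain into $[0,t-\delta]$ where $\sigma_n(\tilde X^n)\to\sigma(\tilde X)$ uniformly and bounded convergence applies to a non-singular kernel, and $[t-\delta,t]$ whose contribution is controlled by $\int_{t-\delta}^t(t-q)^{-2\alpha}\,dq\cdot c_{\ref{growthcond}}^2(1+\sup_n\IE[|\tilde X^n_q|^2])$, uniformly small in $n$ as $\delta\downarrow 0$. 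This identifies $\tilde X$ as a weak solution to \eqref{equt:21_1}. Finally, \eqref{equt:23_3} follows from the uniform bound on $\tilde X^n$ by Fatou's lemma. The bound on the singular tail of the integral is the only delicate step; everything else is standard tightness-plus-Skorokhod machinery.
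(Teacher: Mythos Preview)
Your approximation-and-tightness scaffold matches the paper's exactly: Lipschitz $\sigma_n$ with a common growth constant, uniform $L^p$ bounds from Proposition~\ref{prop:2} and Remark~\ref{rem:22_1}, and the Kolmogorov estimate $\IE|X^n_{t'}-X^n_t|^p\le C|t'-t|^{p(1/2-\alpha)}$ for tightness in $C(\IR_+,\IR)$.

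Where you diverge is in identifying the limit as a solution. You attack the singular stochastic integral head-on via Skorokhod representation and a $[0,t-\delta]\cup[t-\delta,t]$ split. The paper instead applies a fractional integral transform: setting $Y^k_t=\int_0^t(t-s)^{\alpha-1}X^{n_k}_s\,ds$, a stochastic Fubini computation collapses the Volterra kernel and gives
\[
Y^k_t=\int_0^t(t-s)^{\alpha-1}h(s)\,ds+c_\alpha\int_0^t\sigma_{n_k}(X^{n_k}_s)\,dB^{n_k}_s,\qquad c_\alpha=\int_0^1(1-r)^{\alpha-1}r^{-\alpha}\,dr,
\]
so the singularity disappears and the stochastic part is an ordinary square-integrable martingale $M^k$. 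Convergence of $X^{n_k}$ in $C(\IR_+,\IR)$ forces convergence of $Y^k$ and hence of $M^k$; uniform integrability identifies the limit as a martingale $M$ with $\langle M\rangle_t=c_\alpha^2\int_0^t\sigma(X_s)^2\,ds$, a Brownian motion $B$ is then built so that $M_t=c_\alpha\int_0^t\sigma(X_s)\,dB_s$, and inverting the transform via $X_t=c_\alpha^{-1}\frac{d}{dt}\int_0^t(t-s)^{-\alpha}Y_s\,ds$ recovers~\eqref{equt:21_1}. This route never needs Skorokhod, and the singular kernel never appears at the limiting step.

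Your direct route is workable in principle, but the phrase ``$L^2$ estimate using It\^o isometry'' glosses over the one genuine difficulty: along the Skorokhod subsequence the integrators $\tilde B^n$ themselves change, and the isometry alone does not control $\int g_n\,d\tilde B^n-\int g\,d\tilde B$. To close this you would need either a stability result for stochastic integrals under joint convergence of integrand and integrator (Kurtz--Protter type), or---which is effectively what the paper does---to argue via the martingale problem by identifying the bracket of the limit and only then manufacturing $B$. Neither is hard here, but that is the step that must be written out; the $\delta$-splitting handles the kernel singularity, not the changing driver.
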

\begin{proof}
Choose  a sequence of Lipschitz functions $\{\sigma_n\}_{n\geq 1}$ which satisfy the growth condition~\eqref{growthcond} 
uniformly in $n$, and such that $\{\sigma_n\}_{n\geq 1}$ converges to $\sigma$ uniformly on $\IR$, as $n\rightarrow \infty$. 
Then by the previous proposition for each $n\geq 1$ there exists a  process $X^n$ 
that solves  \eqref{equt:21_1} with $\sigma_n$. 
Since $\sigma_n$ satisfy the growth condition~\eqref{growthcond} with the same constant, by
\eqref{equt:21_6} and Remark~\ref{rem:22_1}, we get that for any $T>0, p\geq 2,$
\begin{eqnarray}
\label{equt:22_4}
\sup_{n\geq 1}\sup_{0\leq s\leq T} \IE\left[ \left|X^{n}_s\right|^p\right] &
< \infty.   
\end{eqnarray}
Now, for any $0\leq t<t'$,
we have
\begin{eqnarray*}
|X^n_{t'}-X^n_t|^p&\leq& 
C_p|h(t')-h(t)|+ 
 C_p\int_0^t ((t'-s)^{-\alpha}- (t-s)^{-\alpha})\sigma_n(X^n_s)\,dB_s\\
&&\mbox{}+ C_p\int_t^{t'} (t'-s)^{-\alpha} \sigma_n(X^n_s)\,dB_s\,.
\end{eqnarray*}
To bound the expectations of the three terms on the right hand side, use  
the Burkholder-Davis-Gundy and H\"older inequalities,  \eqref{growthcond}, \eqref{equt:22_4} and some simple algebra. This implies 
\[ \IE\left[|X^n_{t'}-X^n_t|^p\right]\leq C |t'-t|^{p(1/2-\alpha)},\]
where the constant on the right hand side does not depend on $n$. 
By the Kolmogorov criterion we get the tightness of $\{X^n\}_{n\geq 1}$ in $C(\IR_+, \IR)$, and each 
 weak limit point 
is H\"older contiuous with any index less than $1/2 -\alpha$. 

Let $\{X^{n_k}\}_{k\geq 1}$ be some converging subsequence and $X$ be the corresponding limit point. First, clearly
\eqref{equt:23_3} follows from~\eqref{equt:22_4}. 
We will show that $X$ satisfies~\eqref{equt:21_1}. 
Define
\begin{eqnarray*}
Y^k_t&=& \int_0^t(t-s)^{\alpha-1} X^{n_k}_s\,ds,\;\;t\geq 0,\;k\geq 1. 
\end{eqnarray*}
It is easy to check that $Y^k$ satisfies the following equation 
\begin{eqnarray*}
Y^k_t&=&  \int_0^t(t-s)^{\alpha-1} h(s)\,ds+ 
 c_{\alpha}\int_0^t \sigma_{n_k}(X^{n_k}_s)\,dB^{n_k}_s,
\end{eqnarray*}
where $c_{\alpha}=\int_0^1 (1-r)^{\alpha-1}r^{-\alpha}\,dr$. By passing to the limit, due to convergence of 
$\{X^{n_k}\}_{k\geq 1}$ 
in $C(\IR_+, \IR)$ we get that $\{Y^k\}_{k\geq 1}$ converges in $C(\IR_+, \IR)$ to 
$Y_t=\int_0^t(t-s)^{\alpha-1} X_s\,ds,\;\;t\geq 0.$ Moreover 
$$M^k_t\equiv c_{\alpha}\int_0^t \sigma_{n_k}(X^{n_k}_s)\,dB^{n_k}_s\,, t\geq 0,\;k\geq 1,$$ is a sequence of 
square integrable martingales with quadratic variations given by 
$$\langle M^k_{\cdot}\rangle_t =  c_{\alpha}^2\int_0^t \sigma_{n_k}(X^{n_k}_s)^2\,ds\,, t\geq 0,\;k\geq 1.$$
By the  uniform integrability, uniform convergence of $\{\sigma_{n_k}\}_{k\geq 1}$ to $\sigma$ 
 and again by convergence of $\{X^{n_k}\}_{k\geq 1}$ 
 in $C(\IR_+,\IR)$, we get that martingales converge to the martingale $M$ with quadratic varation 
$$\langle M_{\cdot}\rangle_t =  c_{\alpha}^2\int_0^t \sigma(X_s)^2\,ds\,, t\geq 0.$$
Now it is standard to show that there exists a Brownian motion $B$ such  that
$M_t=  c_{\alpha}\int_0^t \sigma(X_s)\,dB_s,\; t\geq 0$, and hence,
\begin{eqnarray*}
Y_t&=&  \int_0^t(t-s)^{\alpha-1} h(s)\,ds+ 
 c_{\alpha}\int_0^t \sigma(X_s)\,dB_s,\;t\geq 0.
\end{eqnarray*}
By reversing the transformation, that is, by recalling that 
 $$X_t=\frac{1}{c_{\alpha}}\frac{d}{dt} \int_0^t (t-s)^{-\alpha}Y_s\,ds,\;t\geq 0,$$ it is easy to verify  that $X$ is a solution 
 to~\eqref{equt:21_1}.
\end{proof}

We will now construct an SPDE related~\eqref{equt:21_1}. 
Fix $\theta >0$. Define
\begin{eqnarray}
\Delta_\theta&=& \frac{2}{(2+\theta)^2}\frac{\partial }{\partial x} \left| x\right|^{-\theta}
  \frac{\partial }{\partial x}.
\end{eqnarray}
Then, for some constant $c_{\theta}>0$,  the function
\begin{eqnarray}
\label{equt:32}
 p^{\theta}_t(x) =\frac{c_{\theta}}{t^{\frac{1}{2+\theta}}}e^{-\frac{|x|^{2+\theta}}{2t}}
\end{eqnarray}
is a classical solution to the following evolution equation 
\begin{eqnarray*}
\left\{\begin{array}{rcl}
\frac{\partial u}{\partial t}&=&\Delta_\theta u \\
  u_0&=& \delta_0
\end{array}
\right. 
\end{eqnarray*} 
on $\IR_+\times\IR$. 
By changing variables, we see that $\int p^\theta_t(x)\,dx$ is independent of $t$, and we choose $c_\theta$ to make $p^\theta_t$ a probability density. Note that $\Delta_0=\frac12\Delta$, where $\Delta$ is the classical Laplacian. 


Let $\{S_t\,, t\geq 0\}$  be the 
semigroup generated by $\Delta_{\theta}$. That is, 
\begin{eqnarray}
S_t\phi(x)=\int_{\IR}p^{\theta}_t(x,y)\phi(y)\,dy,
\end{eqnarray}
where $p^{\theta}_t(x,y)$ is the transition density for the process with generator $\Delta_\theta$ (ie the fundamental solution to $\dot u=\Delta_\theta u$).
Define the domain of the operator $\Delta_{\theta}$:
\begin{eqnarray}
D(\Delta_{\theta}) \equiv \left\{ \phi\in \crap^2: \Delta_{\theta}\phi\in \crap\right\}.
\end{eqnarray}
In certain cases we will need also domain containing more functions:
\begin{eqnarray}
D_{\rm tem}(\Delta_{\theta})\equiv \left\{ \phi\in \ctem^2: \Delta_{\theta}\phi\in \ctem\right\}.
\end{eqnarray}

The generator is ambiguous at $x=0$, but we choose the semigroup to be symmetric; $p_t(0,x)=p_t(0,-x)$. Because $\Delta_\theta$ is in divergence form, $p^\theta_t(x,y)=p^\theta_t(y,x)$, so in particular,
$$
p_t^\theta(x,0)=p_t^\theta(0,x)=p_t^\theta(x),
$$
where the latter is given by \eqref{equt:32}. It is simple to verify that if a process $\xi_t$ has semigroup $S_t$ then 
$|\xi_t|^{1+\frac{\theta}{2}}\text{sign}(\xi_t)$ is a Bessel process of dimension $\frac{2}{2+\theta}<1$, so in fact, explicit formulas for $p_t(x,y)$ could be given.


\begin{lemma}
\label{thm:nonLipschitzexistence}
Let $X_0\in \ctem$ and $g\in C(\IR_+\,,\ctem)\cup C(\IR_+\,,\mf)$. 
Assume that $\theta\in(0,\infty)$ and that $\sigma$ is continuous and satisfies a 
linear growth condition \eqref{growthcond}. 
Then there exists a weak solution $X\in C(\IR_{+}\,,\ctem)$  to the following SPDE
\begin{eqnarray}
\label{equt:4}
X(t,x)&=& S_t X_0(x)+
\int_0^t p^{\theta}_{t-s}(x)\sigma(X(s,0))
 \,dB_s\\
\nonumber
&&\mbox{}+ \int_0^t \int_{\Resm}p^{\theta}_{t-s}(x,y) g(s,y)\,dy\,ds,\;t\geq 0. 
 \end{eqnarray} 

\end{lemma}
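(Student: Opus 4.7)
The plan is to reduce the SPDE to the Volterra SIE already handled by Lemma~\ref{cor:wkexistenceforSIE}, by first solving for the trace $X(\cdot,0)$ and then defining $X(t,x)$ for general $x$ by the mild formula.  Setting $x=0$ formally in \eqref{equt:4} and using the identity $p^\theta_t(0) = c_\theta t^{-\alpha}$ with $\alpha := 1/(2+\theta) \in (0,1/2)$, the trace $Y_t := X(t,0)$ should satisfy the Volterra-type equation
\[
Y_t = h(t) + c_\theta \int_0^t (t-s)^{-\alpha} \sigma(Y_s)\, dB_s,
\]
where
\[
h(t) := S_t X_0(0) + \int_0^t \int_{\IR} p^\theta_{t-s}(0,y) g(s,y)\, dy\, ds.
\]
Using continuity of the semigroup on $\ctem$ and the Gaussian-type decay of $p^\theta_t$, one checks $h \in C(\IR_+,\IR)$; this works uniformly for $g\in C(\IR_+,\ctem)$ or $g\in C(\IR_+,\mf)$.

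First I would apply Lemma~\ref{cor:wkexistenceforSIE} (absorbing $c_\theta$ into $\sigma$ preserves the linear growth bound) to obtain a weak solution $(Y,B)$ of the above SIE on some probability space, together with the moment bound $\sup_{s\le T}\IE[|Y_s|^p]<\infty$ for every $p\ge 2$ and $T>0$ from \eqref{equt:23_3}.  Second, on that probability space I would simply \emph{define}
\[
X(t,x) := S_t X_0(x) + \int_0^t p^\theta_{t-s}(x) \sigma(Y_s)\, dB_s + \int_0^t \int_{\IR} p^\theta_{t-s}(x,y) g(s,y)\, dy\, ds.
\]
By construction $X(t,0)=Y_t$, so $\sigma(X(s,0))=\sigma(Y_s)$ and \eqref{equt:4} is satisfied automatically.

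The main obstacle is then showing that this $X$ admits a modification in $C(\IR_+,\ctem)$.  The two deterministic terms are handled by standard facts about the semigroup $\{S_t\}$ acting on $\ctem$ together with the Gaussian-type decay $|p^\theta_t(x,y)|\le C t^{-\alpha} \exp(-c|x-y|^{2+\theta}/t)$.  The nontrivial work is for the stochastic integral
\[
Z(t,x) := \int_0^t p^\theta_{t-s}(x) \sigma(Y_s)\, dB_s.
\]
Here I would combine the Burkholder-Davis-Gundy inequality, H\"older's inequality, the linear growth of $\sigma$, and the moment bounds on $Y$ to establish, for each $\lambda>0$, $T>0$ and $p$ sufficiently large,
\[
\IE\bigl[\, |Z(t,x)-Z(t',x')|^p\, \bigr] \le C_{\lambda,T,p}\, e^{p\lambda(|x|\vee|x'|)} \bigl(|t-t'|^{p\beta} + |x-x'|^{p\beta}\bigr)
\]
for some $\beta>0$ and all $t,t'\in[0,T]$, $x,x'\in\IR$.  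The key analytic input is an estimate on $\int_0^T \bigl(p^\theta_{t-s}(x)-p^\theta_{t'-s}(x')\bigr)^2 ds$, which follows from the self-similar scaling of $p^\theta$ and the H\"older regularity of $p^\theta_t(x)$ in both arguments; the exponent $\alpha<1/2$ is exactly what makes the resulting time integral finite.  A Kolmogorov continuity argument applied in the weighted space then produces a $\ctem$-valued continuous modification of $Z$, completing the proof.
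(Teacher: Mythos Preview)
Your approach is exactly the one the paper takes: solve the trace SIE via Lemma~\ref{cor:wkexistenceforSIE} (with $h(t)=S_tX_0(0)+\int_0^t\int p^\theta_{t-s}(0,y)g(s,y)\,dy\,ds$ and $\sigma$ rescaled by $c_\theta$), then define $X(t,x)$ by the mild formula and observe $X(t,0)=Y_t$. The only difference is that the paper dismisses the $C(\IR_+,\ctem)$ regularity of $X$ as ``trivial to check,'' whereas you sketch the BDG/Kolmogorov argument for the stochastic term; your extra detail is sound and is essentially what underlies Proposition~\ref{prop} later in the paper.
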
 
\begin{proof}
By Lemma~\ref{cor:wkexistenceforSIE} there exists  weak solution $V$ to the SIE
\begin{eqnarray*}
  V_t =  h(t) +\int_0^t c_{\theta} (t-s)^{-\alpha}\sigma(V_s) dB_s\,,
\end{eqnarray*}
with 
$$ h(t) =S_tX_0(0) + \int_0^t \int_{\Resm}p^{\theta}_{t-s}(0,y) g(s,y)\,dy\,ds, 
$$
$c_{\theta}$ as in\eqref{equt:32}, and 
\begin{eqnarray}
\label{equt:22_5}
\alpha\equiv \frac{1}{2+\theta}.
\end{eqnarray}
Now define 
\begin{eqnarray*}
X(t,x)= S_tX_0(x) + \int_0^t \int_{\Resm}p^{\theta}_{t-s}(x,y) g(s,y)\,dy\,ds
 +\int_0^t p^{\theta}_{t-s}(x)\sigma(V_s)
 \,dB_s
\end{eqnarray*}
It is trivial to check that $X$ is indeed solution to~\eqref{equt:4} with $X(t,0)=V_t\,, t\geq 0,$
and $X$ is in 
$ C(\IR_{+}\,,\ctem)$

\end{proof}


 For the rest of the 
paper we will also assume~\eqref{equt:22_5}. 

It is clear from Lemma~\ref{thm:nonLipschitzexistence} and its proof that there is a correspondence between SPDEs of type~\eqref{equt:4} and SIEs of 
type~\eqref{equt:21_1}.
Consider the particular case with $X_0=x_0={\rm const}$ and
$g(s,x)=\frac{1}{c_{\theta}} \mathsf{g}(s)\delta_0$ in~\eqref{equt:4}.
  Then $S_tx_0=x_0$, so~\eqref{equt:4} becomes
\begin{eqnarray}
\nonumber
X(t,x)&=&
x_0+ \int_0^t p^{\theta}_{t-s}(x)  \frac{\mathsf{g}(s)}{c_{\theta}}\,ds+
\int_0^t p^{\theta}_{t-s}(x)\sigma(X(t,0))
\,dB_s.
 \end{eqnarray}
In particular for $x=0$ we have 
\begin{eqnarray}
\nonumber
X(t,0)&=&x_0+ \int_0^t (t-s)^{-\alpha}  \mathsf{g}(s)\,ds+
\int_0^t  c_\theta (t-s)^{-\alpha}\sigma(X(s,0))
 \,dB_s.
\end{eqnarray}
Thus we get that $X_t=X(t,0)$ 
satisfies the SIE given in~\eqref{equt:cat6} with $c_\theta\sigma(\cdot)$ instead of 
 $\sigma(\cdot)$. 
Conversely, if $X_t$ is a solution to~\eqref{equt:cat6} ith $c_\theta\sigma(\cdot)$ instead of 
 $\sigma(\cdot)$, then as in the proof of Lemma~\ref{thm:nonLipschitzexistence}
we can define
 \begin{eqnarray*}
\label{equt:5}
X(t,x)&\equiv&x_0+ \int_0^t p^{\theta}_{t-s}(x)  \frac{\mathsf{g}(s)}{c_{\theta}}\,ds+
\int_0^t  p^{\theta}_{t-s}(x)
\sigma(X(s))\,dB_s. 
\end{eqnarray*}
Then  $X(\cdot,\cdot)$ lies in $C(\IR_{+}\,,\ctem)$ and satisfies~(\ref{equt:4}) with $X(0,\cdot)=x_0$ and 
$g(s,\cdot)=\frac{\mathsf{g}(s)}{c_{\theta}}\delta_0(\cdot)$. 
Thus Theorem~\ref{thm:unique1} will follow if we can show pathwise uniqueness for~(\ref{equt:4}). In order to prove  the pathwise uniqueness for (\ref{equt:cat6}) it is enough to prove the
pathwise uniqueness for~(\ref{equt:4}). In other words, it follows once we prove the following theorem. 
\begin{theorem}
\label{thm:uniquekunbounded}
Assume that $\alpha\in (0,1/2)$ and that $\sigma:\IR\to\IR$ satisfies 
(\ref{growthcond}) and~(\ref{eq:sigma_Hold}) 
for some
$\gamma\in
(\frac{1}{2(1-\alpha)} , 1]$. 
Let $X_0\in \ctem$ and $g\in C(\IR_+\,,\ctem)\cup C(\IR_+\,,\mf)$.
Then pathwise uniqueness holds for 
solutions of  (\ref{equt:4}) in $C(\IR_{+},\ctem)$.
\end{theorem}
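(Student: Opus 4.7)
Suppose $X^1,X^2\in C(\IR_+,\ctem)$ are two solutions of~\eqref{equt:4} driven by the same $(X_0,g,B)$, and set $u=X^1-X^2$. Subtracting the mild forms kills the deterministic and initial pieces and leaves
\begin{eqnarray*}
u(t,x)=\int_0^t p^{\theta}_{t-s}(x)\bigl[\sigma(X^1(s,0))-\sigma(X^2(s,0))\bigr]\,dB_s.
\end{eqnarray*}
Since $|\sigma(X^1(s,0))-\sigma(X^2(s,0))|\le L|u(s,0)|^\gamma$, pathwise uniqueness is equivalent to showing $u(t,0)\equiv 0$ a.s.; the whole field $u$ then vanishes by the displayed identity. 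The Volterra singularity $p^\theta_{t-s}(0)=c_\theta(t-s)^{-\alpha}$ means $u(\cdot,0)$ is not a semimartingale, so the classical Yamada-Watanabe scheme cannot be applied directly.

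\textbf{Extracting a semimartingale.} To circumvent this I would exploit the underlying SPDE: pairing~\eqref{equt:4} against a test function $\phi\in D_{\rm tem}(\Delta_\theta)$ and using stochastic Fubini gives the weak form
\begin{eqnarray*}
\la u(t,\cdot),\phi\ra=\int_0^t\la u(s,\cdot),\Delta_\theta\phi\ra\,ds+\int_0^t\phi(0)\bigl[\sigma(X^1(s,0))-\sigma(X^2(s,0))\bigr]\,dB_s,
\end{eqnarray*}
which \emph{is} a continuous semimartingale (for each fixed $\phi$), with quadratic variation $\phi(0)^2\int_0^t[\sigma(X^1(s,0))-\sigma(X^2(s,0))]^2\,ds$. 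This is the SPDE analogue of the Yamada-Watanabe setup exploited in~\cite{bib:mp} and~\cite{bib:msp05}.

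\textbf{Yamada-Watanabe with test functions concentrating at the catalyst.} I would then apply It\^o's formula to $\psi_n(\la u(t,\cdot),\phi_m\ra)$, where $\psi_n\uparrow|\cdot|$ is the standard Yamada-Watanabe family with $\psi_n''\ge0$ supported on $[a_{n+1},a_n]\downarrow\{0\}$, and $\phi_m\ge0$ is a smooth bump concentrating at $x=0$ (for instance a mollified version of $p^\theta_{1/m}(\cdot,0)$, so that $\phi_m(0)\sim m^{\alpha}$ and $\la u(s,\cdot),\phi_m\ra\to u(s,0)$ pointwise). Taking expectations kills the martingale part, and using the H\"older bound on $\sigma$ yields
\begin{eqnarray*}
\IE\psi_n(\la u(t,\cdot),\phi_m\ra)\le \IE\!\int_0^t\psi_n'(\cdot)\la u(s,\cdot),\Delta_\theta\phi_m\ra\,ds+\frac{L^2\phi_m(0)^2}{2}\IE\!\int_0^t\psi_n''(\la u(s,\cdot),\phi_m\ra)\,|u(s,0)|^{2\gamma}\,ds,
\end{eqnarray*}
and the game is to drive both error terms to zero under a joint limit in $(n,m)$.

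\textbf{Balancing scales and the main obstacle.} The success of the scheme rests on sharp a priori moment and spatial H\"older estimates for $u$ near $x=0$, refining the Kolmogorov-type bound already used in Lemma~\ref{cor:wkexistenceforSIE}; these let me replace $\la u(s,\cdot),\phi_m\ra$ by $u(s,0)$ with controlled error, and let me estimate the drift term $\la u(s,\cdot),\Delta_\theta\phi_m\ra$. The condition $\gamma>\frac{1}{2(1-\alpha)}$ appears precisely as the arithmetic inequality $2\gamma(1-\alpha)>1$ needed to absorb the singularity $\phi_m(0)^2\sim m^{2\alpha}$ into the smallness of $|u(s,0)|^{2\gamma}$ on the scale $a_n$ where $\psi_n''$ lives, once the spatial H\"older exponent of $u$ (which is governed by $1-\alpha$) is taken into account. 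After sending $n\to\infty$ and $m\to\infty$ with the scales coupled appropriately, what remains is a Volterra Gronwall-type inequality (Lemma~15 of~\cite{bib:dal99}) for $\IE|u(t,0)|$ with kernel $(t-s)^{-2\alpha}$, forcing $u(t,0)\equiv 0$. The main technical obstacle is precisely this three-parameter bookkeeping---test-function width $1/m$, Yamada-Watanabe cutoff $a_n$, and the H\"older scale of $u$: one must choose the scales coherently so that the It\^o correction vanishes while the drift term stays bounded, and in particular one must control the singular divergence-form operator $\Delta_\theta$ acting on test functions concentrating at the catalyst $x=0$.
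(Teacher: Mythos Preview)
Your structural outline---semimartingale via weak form, Yamada--Watanabe functions $\psi_n$ applied to $\la u,\phi_m\ra$, joint limit in $(n,m)$---matches the paper's Sections~\ref{section:auxiliary} and~\ref{section:uniquenessargument}. But there is a genuine gap: you are missing the \emph{iterative improvement of the H\"older regularity of $u$ near its zero set}, which in the paper is Theorem~\ref{thm:H1} and Corollary~\ref{cor:ubnd}, and which is the core of the argument.

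Here is why it is essential. The critical It\^o-correction term (the paper's $I_3^{m,n}$) carries a factor $\phi_m(0)^2\,\psi_n''(\la u_s,\phi_m\ra)\,|u(s,0)|^{2\gamma}$, and controlling it requires knowing that \emph{smallness of the average $\la u_s,\phi_m\ra$ forces smallness of the pointwise value $u(s,0)$ at a comparable scale}. Concretely, if $\phi_m$ has spatial width $\epsilon^\alpha$ and $|\la u_s,\phi_m\ra|\le a_{n-1}$, you must couple $m$ and $n$ so that $a_{n-1}=\epsilon^\xi$ and conclude $|u(s,0)|\lesssim\epsilon^\xi$. With the \emph{a priori} H\"older exponent $\xi\approx\tfrac12-\alpha$ from Proposition~\ref{prop} (not ``$1-\alpha$'' as you write), the bookkeeping in Lemma~\ref{I3lemma} goes through only when $\xi>\tfrac{\alpha}{2\gamma-1}$, equivalently $\gamma>\tfrac{1}{2(1-2\alpha)}$---strictly stronger than the hypothesis $\gamma>\tfrac{1}{2(1-\alpha)}$. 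To reach the stated threshold one must bootstrap: where $u$ is small, the driving noise $\sigma(X^1)-\sigma(X^2)$ is of order $|u|^\gamma$, which makes $u$ locally smoother, with exponent improved from $\xi$ to roughly $\xi\gamma+\tfrac12-\alpha$; iterating pushes $\xi$ up to $\tfrac{\frac12-\alpha}{1-\gamma}\wedge1$, and Lemma~\ref{lemmaonrightxi} shows this \emph{improved} exponent exceeds $\tfrac{\alpha}{2\gamma-1}$ exactly when $\gamma>\tfrac{1}{2(1-\alpha)}$. Your proposal invokes only ``sharp a priori moment and spatial H\"older estimates'' refining Kolmogorov, which caps out at $\tfrac12-\alpha$ and cannot close the gap.

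A secondary issue: the paper does not terminate in a Volterra--Gronwall inequality for $\IE|u(t,0)|$. Instead, after integrating $\phi_n(\la u,\Phi^m_x\ra)$ against a \emph{second} test function $\Psi_s(x)$ in the $x$-variable, the drift $I_2$ is handled by integration by parts (Lemma~\ref{lem:conv}(b)), and the limit yields $\int\IE|\tilde X(t,x)|\Psi_t(x)\,dx\le\int_0^t\int\IE|\tilde X(s,x)|\,|\Delta_\theta\Psi_s+\dot\Psi_s|\,dx\,ds$; choosing $\Psi_s=(S_{t-s}\phi)g_N$ then kills the right side. Your single-test-function scheme leaves $\la u_s,\Delta_\theta\phi_m\ra$ unpaired, and since $\Delta_\theta$ blows up on bumps concentrating at $0$, you would need an additional device to control it.
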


\bigskip 
The proof of our pathwise uniqueness theorems will require some moment bounds and regularity 
properties 
for {\it arbitrary} continuous 
$\ctem$-valued solutions to the equation~(\ref{equt:4}).
We know that the fractional Brownian motion $\int_0^t (t-s)^{-\alpha}\,dB_s$ is 
H\"older continuous with exponent $\xi$ for any  \mbox{$\xi<\frac12-\alpha$}. 
More generally, if $X$ is any solution to ~(\ref{equt:4}) then $X(t,0)$  is H\"older  with exponent $\xi$ for any  \mbox{$\xi<\frac12-\alpha$}. 
In fact, we have the following result.

\begin{proposition}
\label{prop}
Let $X_{0} \in \ctem\,, g\in C(\IR_+\,,\ctem)\cup C(\IR_+\,,\mf),\alpha\in (0,1/2)$ and  let $\sigma$ be a
continuous function satisfying the growth bound~(\ref{growthcond}).
 Then any solution
$X\in C(\IR_{+}, \ctem)$ to (\ref{equt:4})
has  the following properties.
\begin{itemize}
\item[{\bf (a)}]
For any $T,\lambda >0$ and $p \in (0,\infty),$
\begin{equation}
\label{eq:uniformmoments2a}
\sup_{0\leq t\leq T}\sup_{x\in\IR}\IE\Big( 
|X(t,x)|^{p} e^{-\lambda |x|}   \Big)
< \infty.
\end{equation}
\item[{\bf (b)}]
For any   $\xi \in (0, \frac12-\alpha)$ the process $X(\cdot,\cdot)$ is a.s.
uniformly H\"older continuous on compacts in $(0,\infty)\times [-1,1]$, and 
the process $$Z(t,x)\equiv X(t,x)-S_tX_0(x)-\int_0^t \int_{\Resm}p^{\theta}_{t-s}(x,y) g(s,y)\,dy\,ds,
$$ is uniformly H\"older continuous on
compacts in $[0,\infty)\times [-1,1]$, with H\"older coefficients
$\xi$ in time and space.

Moreover, for any $T>0$, $R>0$, and
$0\le t,t' \leq T, x,x' \in \IR$ such that $|x|,|x'|\leq 1$ as well as $p \in [2,
\infty)$,
there exists a constant
$c_{\ref{eq:Kolmogorov}}=c_{\ref{eq:Kolmogorov}}(T,p)$ such that
\begin{equation}
\label{eq:Kolmogorov}
\IE\left( |Z(t,x) - Z(t',x') |^{p}\right)
\leq  c_{\ref{eq:Kolmogorov}}\left(|t-t'|^{(1/2-\alpha) p}
+ |x-x'|^{(1/2-\alpha)p} \right).
\end{equation}
\end{itemize}
\end{proposition}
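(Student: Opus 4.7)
The plan is to prove part (a) first---which amounts to an $L^p$ bound on $X(t,0)$, since that process drives the stochastic integral---and then deduce (b) from (a) by a Kolmogorov-type estimate on $Z(t,x)$. Once a uniform bound on $\sup_{s\le T}\IE[|X(s,0)|^p]$ is in hand, the general tempered moment \eqref{eq:uniformmoments2a} reduces to kernel estimates on $p^\theta_{t-s}(x)$ together with a bound on $S_tX_0(x)$ using $X_0\in\ctem$.

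The delicate step in (a) is establishing the a priori finiteness of $\IE[|X(t,0)|^p]$. Path continuity of $X$ makes $X(\cdot,0)$ locally bounded almost surely but gives no $L^p$ control, and stopping at $\tau_N:=\inf\{t:|X(t,0)|>N\}$ inside the stochastic integral $\int_0^t p^\theta_{t-s}(0)\sigma(X(s,0))\,dB_s$ is awkward, because the fractional kernel depends on $t$. I would therefore localize the \emph{integrand} rather than the upper limit, introducing
\[
\tilde X^N(t) := h(t) + \int_0^t p^\theta_{t-s}(0)\,\sigma(X(s,0))\,\mathbf{1}_{\{s\le \tau_N\}}\,dB_s,
\]
where $h(t)$ collects the two deterministic terms in the mild representation of $X(t,0)$. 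Since the integrand is bounded by $c_\theta(t-s)^{-\alpha}c_{\ref{growthcond}}(1+N)$ pathwise, $\tilde X^N(t)$ has moments of every order, and $\tilde X^N(t) = X(t,0)$ on $\{t\le \tau_N\}$. Applying BDG, the Jensen--H\"older step from the proof of Proposition~\ref{prop:2}, the identity $X(s,0)\mathbf{1}_{\{s\le\tau_N\}} = \tilde X^N(s)\mathbf{1}_{\{s\le\tau_N\}}$, and the linear growth bound \eqref{growthcond}, one obtains
\[
\sup_{s\le t}\IE[|\tilde X^N(s)|^p] \;\le\; C + C\int_0^t (t-s)^{-2\alpha}\sup_{r\le s}\IE[|\tilde X^N(r)|^p]\,ds
\]
with $C$ independent of $N$. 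The extension of Gronwall's lemma used in the proof of Proposition~\ref{prop:2} then gives $\sup_{s\le T}\IE[|\tilde X^N(s)|^p]\le c_T$, uniformly in $N$; letting $N\to\infty$ and applying monotone convergence yields $\sup_{s\le T}\IE[|X(s,0)|^p]<\infty$. The tempered estimate \eqref{eq:uniformmoments2a} then follows by inserting this bound into \eqref{equt:4}, using $p^\theta_{t-s}(x)\le p^\theta_{t-s}(0)=c_\theta(t-s)^{-\alpha}$ in the stochastic term, and the Gaussian-type spatial decay of $p^\theta_t(x,y)$ together with the hypotheses on $X_0$ and $g$ in the deterministic terms.

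For (b), the decomposition $X(t,x) = S_tX_0(x) + \int_0^t\!\int p^\theta_{t-s}(x,y)g(s,y)\,dy\,ds + Z(t,x)$ shows that the non-$Z$ part is smooth in $x$ and in $t$ away from $t=0$, so it suffices to verify \eqref{eq:Kolmogorov} for $Z$. Combining BDG with the Jensen--H\"older step and the $L^p$ bound on $\sigma(X(\cdot,0))$ from (a), the problem reduces to the deterministic kernel estimates
\[
\int_0^t\bigl(p^\theta_{t-s}(x)-p^\theta_{t-s}(x')\bigr)^2\,ds \;\le\; C|x-x'|^{1-2\alpha},
\]
\[
\int_0^t\bigl(p^\theta_{t-s}(x)-p^\theta_{t'-s}(x)\bigr)^2\,ds + \int_t^{t'}p^\theta_{t'-s}(x)^2\,ds \;\le\; C|t'-t|^{1-2\alpha},
\]
valid for $(t,x),(t',x')$ in a fixed compact subset of $[0,\infty)\times\IR$. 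These follow from the explicit form \eqref{equt:32} by a scaling/change-of-variables argument. Picking $p$ large enough and invoking Kolmogorov's continuity criterion then yields the H\"older regularity of $Z$ on compacts of $[0,\infty)\times[-1,1]$, and adding the smoothness of $X-Z$ away from $t=0$ gives the same regularity for $X$ on compacts of $(0,\infty)\times[-1,1]$.

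The main obstacle is the a priori finiteness of moments of $X(t,0)$: stopping the upper limit of a fractional stochastic integral distorts the kernel, so the argument must instead localize the noise coefficient, at the cost of using the auxiliary process $\tilde X^N$ that agrees with $X(\cdot,0)$ only on $\{t\le\tau_N\}$. Everything else is a fairly standard combination of BDG, Jensen--H\"older, fractional Gronwall, and Kolmogorov's criterion.
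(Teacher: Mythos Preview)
Your proposal is correct and tracks the paper's strategy closely. For part~(b) the paper does exactly what you describe: it writes $Z(t',x)-Z(t,y)$ as a sum of three stochastic integrals, applies BDG and H\"older, invokes the moment bound on $X(s,0)$, and reduces to the two kernel estimates you state (these are proved in the paper as Lemmas~\ref{lem:23_1} and~\ref{lem:23_2}); Kolmogorov then finishes the job.

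For part~(a) the paper is terser than you are: it simply asserts that the moment bound follows from the correspondence between the SPDE~\eqref{equt:4} and the SIE~\eqref{equt:21_1} together with the bound~\eqref{equt:23_3}. But~\eqref{equt:23_3} was established only for the particular weak solution constructed by approximation in Lemma~\ref{cor:wkexistenceforSIE}, not for an arbitrary $C(\IR_+,\ctem)$ solution, so the paper is tacitly assuming that the same BDG--fractional-Gronwall argument can be rerun for any solution. Your localization via $\tilde X^N$ and $\tau_N$ is precisely the clean way to justify that step: it supplies the a~priori finiteness needed to start the Gronwall iteration, and the resulting bound is uniform in $N$. (Minor remark: the passage to the limit works because $|X(t,0)|^p\mathbf{1}_{\{t\le\tau_N\}}\le|\tilde X^N(t)|^p$ and $\tau_N\uparrow\infty$, so monotone convergence applies as you say.) In short, your argument for~(a) fills a gap the paper leaves implicit, while for~(b) the two arguments are essentially identical.
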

The proof of Proposition~\ref{prop} is delayed to Section~\ref{sec:prop}.

It is straightforward to show that under the hypotheses of Lemma
\ref{thm:nonLipschitzexistence}, solutions to (\ref{equt:4}) with
continuous $\ctem$-valued paths are also solutions to the equation in its
distributional form for suitable test functions
$\Phi$.
More specifically, for $\Phi \in C(\IR_+, D(\Delta_{\theta})$, such that $s\mapsto\frac{\partial \Phi_s(\cdot)}{\partial s} \in 
 C(\IR_+, \crap)$, we have
\begin{eqnarray}
\label{weakheatb}
\int_{\IR} X(t,x) \Phi_t(x)dx
&=& \int_{\IR} X_{0}(x)  \Phi_0(x)dx
\\
\nonumber
& &
+\int_{0}^{t}\int_{\IR} X(s,x) \left(\Delta_{\theta}\Phi_s(x)+ \frac{\partial \Phi_s(x)}{\partial s}\right) dx ds
\\
\nonumber
& &+\int_{0}^{t}\int_{\IR}g(s,x)\Phi_s(x)\,dx\, ds
\\
\nonumber
& &+\int_{0}^{t}\sigma(X(s,0))\Phi_s(0) \,dB_s
\quad\forall t\ge 0 \quad a.s.
\end{eqnarray}
In fact, given an appropriate class of test functions,
the two notions of solution (\ref{equt:4}) and (\ref{weakheatb})
are equivalent. 
For the
details of  a similar proof we refer to Shiga~\cite{bib:shiga94} Theorem~2.1 and its proof.
There, 
the setting is a bit different as it works in the setting 
of a non-degenerate SPDE. However, the
arguments do not change as long as the stochastic
integral in (\ref{weakheatb}) is well defined, which can easily be checked.

\medskip

Now we say a few words about the proof of Theorem~\ref{thm:weakUniq}. Again, by
Lemma~\ref{thm:nonLipschitzexistence}, its proof and discussion after it, it is enough 
to prove the weak uniqueness for corresponding SPDE. That is we are going to prove the following resut. 

\begin{theorem}
\label{thm:unique_weak_SPDE}
Assume that $\alpha\in (0,1/2)$,  and $\lambda\in\IR$.
Let $X_0\in \ctem^+$ and\\  \mbox{$g\in C(\IR_+\,,\ctem^+)\cup C(\IR_+\,,\mf)$.} 
Then there exists at most one weak solution $X\in C(\IR_{+}\,,\ctem^+)$  to the following SPDE
\begin{eqnarray}
\label{equt:4_weak}
X(t,x)&=& S_t X_0(x)+
\int_0^t p^{\theta}_{t-s}(x)\lambda \sqrt{X(s,0)}
 \,dB_s\\
\nonumber
&&\mbox{}+ \int_0^t \int_{\Resm}p^{\theta}_{t-s}(x,y) g(s,y)\,dy\,ds,\;t\geq 0. 
 \end{eqnarray} 
\end{theorem}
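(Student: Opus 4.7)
The plan is to prove weak uniqueness via a log-Laplace duality argument, treating \eqref{equt:4_weak} as the mild form of the martingale problem for a superprocess-type object with catalyst concentrated at $x=0$. Using the equivalence between the mild form \eqref{equt:4_weak} and the weak formulation \eqref{weakheatb}, any $C(\IR_+,\ctem^+)$-valued solution satisfies: for $\Phi\in C(\IR_+,D_{\rm tem}(\Delta_\theta))$ with $\partial_s\Phi_s\in C(\IR_+,\ctem)$,
\begin{equation*}
N_t(\Phi)=\langle X_t,\Phi_t\rangle-\langle X_0,\Phi_0\rangle-\int_0^t\langle X_s,\Delta_\theta\Phi_s+\partial_s\Phi_s\rangle\,ds-\int_0^t\langle g_s,\Phi_s\rangle\,ds
\end{equation*}
is a continuous local martingale with $\langle N(\Phi)\rangle_t=\int_0^t\lambda^2 X(s,0)\Phi_s(0)^2\,ds$. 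For fixed $T>0$ and $\psi\in\crap$ non-negative, I would then introduce the dual equation
\begin{equation*}
v(s,x)=S_s\psi(x)-\frac{\lambda^2}{2}\int_0^s p^\theta_{s-r}(x)\,v(r,0)^2\,dr,\qquad 0\le s\le T,
\end{equation*}
which is the mild formulation of $\partial_s v=\Delta_\theta v-\tfrac{\lambda^2}{2}v(s,0)^2\delta_0$, $v(0,\cdot)=\psi$. At $x=0$ this collapses to the scalar quadratic Volterra equation $v(s,0)=S_s\psi(0)-\tfrac{\lambda^2 c_\theta}{2}\int_0^s(s-r)^{-\alpha}v(r,0)^2\,dr$, which I would solve by Picard iteration (on short intervals patched together) using the a priori bound $0\le v(s,0)\le\sup\psi$; the spatial profile $v(s,x)$ is then reconstructed from $v(\cdot,0)$ by the mild equation.

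The heart of the argument is an It\^o calculation applied to $F(t)=\exp(-\langle X_t,v(T-t,\cdot)\rangle)$. Substituting $\Phi_s=v(T-s,\cdot)$ into the martingale problem and invoking the log-Laplace PDE, the drift contribution of $\langle X_s,\Delta_\theta\Phi_s+\partial_s\Phi_s\rangle$ collapses to $\tfrac{\lambda^2}{2}X(s,0)v(T-s,0)^2$, which exactly cancels the It\^o second-order correction $\tfrac12\lambda^2 X(s,0)v(T-s,0)^2$ coming from $\langle N(v(T-\cdot,\cdot))\rangle_t$. After multiplying $F(t)$ by the deterministic factor $\exp(\int_0^t\langle g_s,v(T-s,\cdot)\rangle\,ds)$, what remains is a non-negative local martingale; boundedness of $v$ and the moment estimates of Proposition~\ref{prop}(a) upgrade it to a true martingale. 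Equating expectations at $t=0$ and $t=T$ yields
\begin{equation*}
\IE\bigl[e^{-\langle X_T,\psi\rangle}\bigr]=\exp\!\Bigl(-\langle X_0,v(T,\cdot)\rangle-\int_0^T\langle g_s,v(T-s,\cdot)\rangle\,ds\Bigr),
\end{equation*}
depending only on $X_0$, $g$, $\lambda$ and $\theta$. This characterises the law of $X_T$ for every $T\ge 0$; the joint law of $(X_{T_1},\ldots,X_{T_n})$ follows by iterating the duality backward from $T_n$ to $T_1$, enriching the initial data of the dual equation at each $T_i$ by the new test function $\psi_i$, which gives full weak uniqueness.

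The main obstacle I foresee is rigorizing the It\^o step: the dual $v(s,\cdot)$ inherits a cusp at $x=0$ from the $\delta_0$ source, so $v(T-s,\cdot)$ is not literally in $D_{\rm tem}(\Delta_\theta)$ as a test function in \eqref{weakheatb}. I would handle this by approximating $\delta_0$ by a smooth mollifier $\delta^\epsilon$, solving the mollified log-Laplace equation to obtain a smooth $v_\epsilon$ to which the martingale-problem computation applies directly, and then passing to the limit $\epsilon\downarrow 0$. Uniform bounds $0\le v_\epsilon(s,0)\le\sup\psi$ together with Proposition~\ref{prop}(a) provide the uniform integrability needed to exchange the limit with the expectation. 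A secondary issue is positivity $v\ge 0$ (so the Laplace identity has probabilistic meaning): this should follow from the Picard iteration starting at $v^0=0$ together with a comparison argument, or equivalently from interpreting $v$ as the log-Laplace functional of an auxiliary subprocess.
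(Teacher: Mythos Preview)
Your proposal is correct and follows essentially the same log-Laplace duality argument as the paper: introduce the nonlinear dual equation $u(t,x)=S_t\phi(x)-\tfrac{\lambda^2}{2}\int_0^t p^\theta_{t-s}(x)u(s,0)^2\,ds$, apply It\^o's formula to $e^{-\langle X_t,u(T-t,\cdot)\rangle}$, and deduce the Laplace functional of $X_T$. The only notable technical difference is the smoothing step you anticipated: rather than mollifying $\delta_0$ in the dual equation, the paper smooths the exact dual solution itself via the semigroup, taking $\Phi_s=S_\epsilon U^\phi_{T-s}$ as the test function in \eqref{weakheatb} (so the cancellation leaves an explicit error $\tfrac12\bigl(X(s,0)(S_\epsilon U^\phi_{T-s}(0))^2-S_\epsilon X_s(0)\,U^\phi_{T-s}(0)^2\bigr)$ that vanishes as $\epsilon\downarrow 0$ by continuity), and then passes from one-dimensional to full weak uniqueness by the standard Ethier--Kurtz argument rather than by iterating the duality.
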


\bigskip

\section{Proof of Theorems~\ref{thm:unique_weak_SPDE} and~\ref{thm:weakUniq}}
\label{section:weak_uniqueness}
We start with proving Theorem~\ref{thm:unique_weak_SPDE}. By simple scaling, we may and will assume, 
without loss of generality, that $\lambda =1$. 

We need the following lemma. 
\begin{lemma}
Let $\phi$ be a non-negative function in  $C_c(\IR)$. Then there exists a unique, non-negative solution 
$u=U^{\phi}\in C(\IR_+, \crap(\IR))$ to the following equation
\begin{eqnarray}
\label{eq:mildLL}
u(t,x)&=& S_t\phi(x) - \int_0^t p^{\theta}_{t-s}(x)\frac{1}{2}u(s,0)^2\,ds,\;\;t\geq 0, \; x\in \IR.   
\end{eqnarray}
\end{lemma}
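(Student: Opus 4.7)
The plan is to reduce the problem to a scalar Volterra integral equation obtained by evaluating \eqref{eq:mildLL} at $x=0$. Setting $v(t) := u(t,0)$ and using $p^{\theta}_{t-s}(0) = c_{\theta}(t-s)^{-\alpha}$ with $\alpha = 1/(2+\theta)$, the equation becomes
\begin{equation*}
v(t) = h(t) - \frac{c_{\theta}}{2}\int_0^t (t-s)^{-\alpha} v(s)^2 \, ds, \qquad h(t) := S_t\phi(0).
\end{equation*}
Conversely, any bounded non-negative continuous solution $v$ of this scalar equation recovers a solution of \eqref{eq:mildLL} via
$u(t,x) := S_t\phi(x) - \tfrac{1}{2}\int_0^t p^{\theta}_{t-s}(x) v(s)^2 \, ds$, with $u(t,0) = v(t)$; membership in $\crap$ follows from the super-exponential decay of both $S_t\phi(x)$ (since $\phi \in C_c$) and $p^{\theta}_{t-s}(x)$. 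So it is enough to prove existence and uniqueness of a bounded non-negative continuous solution of the scalar Volterra equation.

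For uniqueness, I would exploit the fact that any non-negative solution automatically satisfies $0 \leq v(t) \leq h(t) \leq \|\phi\|_\infty$, since $p^{\theta}_t(0,\cdot)$ is a probability density and the integral term is subtracted. Thus two non-negative continuous solutions $v_1,v_2$ are both uniformly bounded by $\|\phi\|_\infty$, and factoring $v_1^2 - v_2^2 = (v_1+v_2)(v_1-v_2)$ gives
\begin{equation*}
|v_1(t) - v_2(t)| \leq c_{\theta}\|\phi\|_\infty \int_0^t (t-s)^{-\alpha}|v_1(s)-v_2(s)|\,ds,
\end{equation*}
from which the generalized Gronwall lemma (Lemma~15 of \cite{bib:dal99}) yields $v_1 \equiv v_2$.

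For existence, the naive Picard iteration does not preserve positivity — and positivity is precisely the hypothesis that supplies the a priori bound $v\leq h$. I would therefore regularize: replace $\delta_0$ in the underlying formal PDE $\partial_t u = \Delta_{\theta} u - \tfrac{1}{2} u(\cdot,0)^2 \delta_0$ by a smooth non-negative mollifier $\rho_{\epsilon} \uparrow \delta_0$ of compact support, and study
\begin{equation*}
u^{\epsilon}(t,x) = S_t\phi(x) - \tfrac{1}{2}\int_0^t \int p^{\theta}_{t-s}(x,y) u^{\epsilon}(s,y)^2 \rho_{\epsilon}(y)\,dy\,ds.
\end{equation*}
This is the log-Laplace equation for a super-process with $\Delta_{\theta}$-motion and bounded continuous branching rate $\rho_{\epsilon}$; standard super-process theory supplies a unique non-negative solution with $0\leq u^{\epsilon}\leq \|\phi\|_\infty$. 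The family $\{u^{\epsilon}(\cdot,0)\}$ is uniformly bounded and, using the smoothing provided by $p^{\theta}_{t-s}$, equicontinuous on compacts in time; extracting an Arzel\`a--Ascoli convergent subsequence and passing to the limit in the integral equation produces a non-negative continuous $v$ solving the scalar Volterra equation, and hence the desired $u = U^{\phi}$.

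The main obstacle is the existence step: the nonlinearity $-v^2$ has the wrong sign for a direct Picard/contraction argument to deliver a non-negative solution, so one must import positivity from outside the Volterra equation itself — either via the probabilistic log-Laplace interpretation used above, or by a Schauder fixed-point argument on the compact convex set $\{0\leq v\leq \|\phi\|_\infty\}$ together with a stability/comparison estimate to ensure that solutions of the approximate problems remain non-negative in the limit.
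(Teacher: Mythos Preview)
Your proposal is correct and is essentially a fleshed-out version of what the paper does: the paper's proof is simply a pointer to Proposition~2.3.1 of \cite{bib:dawfle94} (with ideas from Theorem~3.5 of \cite{bib:dfr91}), adapted to the $\Delta_\theta$ semigroup, together with the domination $u\le S_t\phi$ for the $\crap$ claim. The Dawson--Fleischmann argument proceeds precisely by approximating the point catalyst by smooth catalysts and invoking the corresponding log-Laplace/super-process theory, which is exactly your regularization-plus-limit scheme; your reduction to the scalar Volterra equation at $x=0$ and the Gronwall-based uniqueness are clean additions but do not constitute a genuinely different route.
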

\begin{proof}
The proof is an easy adaptation of the proof of Proposition 2.3.1 in~\cite{bib:dawfle94}  for our cituation 
where the Brownian semigroup and its kernel is replaced by the semigroup and the kernel generated by $\Delta_{\theta}$. 
It is not difficult  to see that all the basic estimates hold also in this case. Note that the proof of 
Proposition 2.3.1 in~\cite{bib:dawfle94} also uses the ideas from 
the proof of 
Theorem 3.5 in~\cite{bib:dfr91} where a more general set of ``catalysts'' is considered.
The fact that for any $t\geq 0$, $u(t,\cdot)\in \crap(\IR)$ is an easy consequence of the domination 
$$ u(t,x)\leq S_t\phi(x),\;\; t\geq 0, x\in \IR. $$
\end{proof}

For any two functions $\phi, \psi$ on $\IR$, denote 
$$ \la \phi,\psi\ra\equiv \int_{\Resm} \phi(x)\psi(x) dx$$
whenever integral exists. 

Now we need the following lemma. 
\begin{lemma}
\label{lem:uniq_onedim}
Let $X_0\in \ctem^+$ and \mbox{$g\in C(\IR_+\,,\ctem^+)\cup C(\IR_+\,,\mf)$.} 
Let  $X$ be any solution to (\ref{equt:4_weak})  in $C(\IR_+\,,\ctem^+)$. Then for any non-negative $\phi\in C^{\infty}_c(\IR)$, we have 
\begin{eqnarray}
\IE\left[ e^{-\la X_t, \phi \ra}\right]
&=&
 \IE\left[ e^{-\la X_0, U^{\phi}_{t}\ra - \int_{0}^{t}\la g(s,\cdot), U^{\phi}_{t-s}\ra\, ds
}\right], 
\end{eqnarray}
for all $t\geq 0.$
\end{lemma}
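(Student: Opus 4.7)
The plan is to establish this Laplace duality by applying It\^o's formula to an appropriate exponential process and verifying that the resulting local martingale is in fact a true martingale. Fix $t > 0$ and non-negative $\phi \in C_c^\infty(\IR)$, and for $s \in [0,t]$ set
$$F_s := \la X_s, U^{\phi}_{t-s}\ra - \int_0^s \la g(r, \cdot), U^{\phi}_{t-r}\ra \, dr,$$
so that $F_0 = \la X_0, U^{\phi}_t\ra$ and $F_t = \la X_t, \phi\ra - \int_0^t \la g(r, \cdot), U^{\phi}_{t-r}\ra \, dr$. Since $X_0$ and $g$ are deterministic, the claimed identity reduces to $\IE[e^{-F_t}] = e^{-F_0}$.

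Formally differentiating the mild equation~\eqref{eq:mildLL} gives $\partial_r U^{\phi}_r = \Delta_\theta U^{\phi}_r - \tfrac{1}{2} U^{\phi}_r(0)^2 \delta_0$, so inserting $\Phi_r(\cdot) = U^{\phi}_{t-r}(\cdot)$ into the weak form~\eqref{weakheatb} collapses the $\left(\Delta_\theta + \partial_r\right)\Phi_r$ term onto the Dirac mass and (with $\sigma(u) = \sqrt{u}$) yields
$$F_s = \la X_0, U^{\phi}_t\ra + \tfrac{1}{2}\int_0^s X(r,0)\, U^{\phi}_{t-r}(0)^2 \, dr + \int_0^s \sqrt{X(r,0)}\, U^{\phi}_{t-r}(0) \, dB_r.$$
It\^o's formula applied to $e^{-F_s}$ then produces, at time $s$, a drift $-\tfrac{1}{2} e^{-F_s} X(s,0) U^{\phi}_{t-s}(0)^2$ and an It\^o correction $+\tfrac{1}{2} e^{-F_s} X(s,0) U^{\phi}_{t-s}(0)^2$; these cancel exactly, so $e^{-F_s}$ is a local martingale. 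Since $X \ge 0$, $U^{\phi} \ge 0$ and $g \ge 0$, we have $F_s \ge -\int_0^t \la g(r,\cdot), U^{\phi}_{t-r}\ra\, dr$, a finite deterministic constant, so $e^{-F_s}$ is bounded, hence a true martingale. Equating its expectation at $s=0$ and $s=t$ delivers the claim.

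The principal technical obstacle is that $U^{\phi}_{t-r}$ is not a literal member of $D(\Delta_\theta)$: its second space derivative carries a Dirac mass at $x=0$ arising from the mild equation, so it cannot be substituted directly into the weak form~\eqref{weakheatb}. I would resolve this by mollification. Replace $\delta_0$ in the pointwise forcing by the smooth density $p^{\theta}_{\epsilon}(\cdot, 0)$ --- equivalently, replace $p^{\theta}_{t-s}(x)$ in~\eqref{eq:mildLL} by $p^{\theta}_{t-s+\epsilon}(x)$ --- and obtain a classical solution $U^{\phi,\epsilon} \in C(\IR_+, D(\Delta_\theta))$ whose time derivative lies in $C(\IR_+, \crap)$. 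The It\^o argument above then applies verbatim to the functional $F^{\epsilon}_s$ built from $U^{\phi, \epsilon}$, giving $\IE[e^{-F^{\epsilon}_t}] = e^{-F^{\epsilon}_0}$. Using the contraction-type estimates that underlie the existence of $U^{\phi}$ (the analogues for $\Delta_\theta$ of those in the proof of Proposition~2.3.1 of~\cite{bib:dawfle94}) one checks that $U^{\phi,\epsilon} \to U^{\phi}$ in $C([0,t], \crap)$ and in particular $U^{\phi,\epsilon}(\cdot, 0) \to U^{\phi}(\cdot, 0)$ uniformly on $[0,t]$ as $\epsilon \downarrow 0$. Bounded convergence together with the moment bounds of Proposition~\ref{prop}(a) then allows one to pass to the limit on both sides of the martingale identity, completing the proof.
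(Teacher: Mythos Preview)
Your overall strategy---apply It\^o's formula to the exponential of a dual functional so that drift and correction cancel, and regularize to make the test function admissible---is exactly what the paper does. The regularization you propose (replace $\delta_0$ by $p^{\theta}_{\epsilon}$ in the equation for $U^{\phi}$) is a close cousin of the paper's (apply $S_\epsilon$ to the solution $U^\phi$ itself). Either choice would work.

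However, there is a genuine gap in your claim that ``the It\^o argument above then applies verbatim to the functional $F^\epsilon_s$, giving $\IE[e^{-F^\epsilon_t}]=e^{-F^\epsilon_0}$.'' It does not. When you plug $\Phi_r=U^{\phi,\epsilon}_{t-r}$ into the weak form~\eqref{weakheatb}, the drift term $(\Delta_\theta+\partial_r)\Phi_r$ collapses not to a Dirac mass but to $\tfrac12 U^{\phi,\epsilon}_{t-r}(0)^2\,p^{\theta}_\epsilon(\cdot)$, so integrating against $X_r$ produces $\tfrac12 U^{\phi,\epsilon}_{t-r}(0)^2\,S_\epsilon X_r(0)$. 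The It\^o correction, on the other hand, comes from the stochastic integral $\int_0^s \sqrt{X(r,0)}\,U^{\phi,\epsilon}_{t-r}(0)\,dB_r$ and contributes $\tfrac12 X(s,0)\,U^{\phi,\epsilon}_{t-s}(0)^2$. These two do \emph{not} cancel for fixed $\epsilon>0$; you are left with a residual drift
\[
\tfrac12\,e^{-F^{\epsilon}_s}\,U^{\phi,\epsilon}_{t-s}(0)^2\bigl(X(s,0)-S_\epsilon X_s(0)\bigr)
\]
and hence $\IE[e^{-F^\epsilon_t}]=e^{-F^\epsilon_0}+R_\epsilon$ with $R_\epsilon\neq 0$ in general. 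The paper's proof encounters the analogous mismatch (there it is $X(s,0)(S_\epsilon U^\phi_{T-s}(0))^2-S_\epsilon X_s(0)\,U^\phi_{T-s}(0)^2$) and disposes of it by an explicit dominated-convergence argument using the continuity of $X_s(\cdot)$ and of $U^\phi_\cdot(\cdot)$. Your write-up needs this additional step: show $R_\epsilon\to 0$, not merely that $F^\epsilon\to F$. Once you add that, the proof is complete and essentially matches the paper's.
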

\begin{proof}
First, note that by standard arguments the solution $U^{\phi}$ to~\eqref{eq:mildLL} also satisfies the following weak form of the equation
\begin{eqnarray}
\label{eq:weakLL}
\la U^{\phi}_t, \psi\ra &=& \la \phi, \psi\ra + \int_0^t \la U^{\phi}_s, \Delta_{\theta}\psi\ra 
 -\frac{1}{2}U^{\phi}_s(0)^2\psi(0)\,ds,\;\;t\geq 0, \;   
\end{eqnarray}
forall $\psi\in D_{\rm tem}(\Delta_{\theta})$. Moreover $\la U^{\phi}_t, \psi\ra$ is differentialble in $t$ and
\begin{eqnarray}
\label{eq:weakLL1}
\frac{\partial \la U^{\phi}_t, \psi\ra}{\partial t} &=& \la U^{\phi}_t, \Delta_{\theta}\psi\ra 
 -\frac{1}{2}U^{\phi}_t(0)^2\psi(0),\;\;t\geq 0, \; 
\end{eqnarray}
forall $\psi\in D_{\rm tem}(\Delta_{\theta})$.

Fix arbitrary non-negative $\phi\in C^{\infty}_c(\IR)$ and $\ep>0$.  By properties of $S_t$ and spaces $\crap, \ctem$, it is easy to check that 

\begin{eqnarray}
\label{eq:ep_U} 
S_{\ep} U^{\phi}_\cdot \in C(\IR_+, D(\Delta_{\theta})), \frac{\partial S_{\ep} U^{\phi}_s}{\partial s} \in C(\IR_+,\crap),
\end{eqnarray}
and 
\begin{eqnarray}
\label{eq:ep_X}  
S_{\ep} X_t \in C(\IR_+, D_{\rm tem}(\Delta_{\theta})),\;\; {\rm a.s.}
 \end{eqnarray}
Fix arbitrary $T>0$. Use~\eqref{weakheatb}, \eqref{eq:ep_U} and~\eqref{eq:ep_X}  to get 

\begin{eqnarray}
\label{eq:DUAL1}
\la X_t, S_{\ep}U^{\phi}_{T-t} \ra
&=& \la X_0, S_{\ep} U^{\phi}_{T} \ra
\\
\nonumber
& &
+\int_{0}^{t} \lla X_s,  \Delta_{\theta} S_{\ep} U^{\phi}_{T-s} + \frac{\partial S_{\ep} U^{\phi}_{T-s}}{\partial s}\rra
 ds
\\
\nonumber
& &+\int_{0}^{t}\la g(s,\cdot), S_{\ep} U^{\phi}_{T-s}\ra\, ds
\\
\nonumber
& &+\int_{0}^{t}\sqrt{X(s,0)}S_{\ep} U^{\phi}_{T-s}(0) \,dB_s
\quad\forall t\ge 0, \quad {\rm a.s.}
\end{eqnarray}
Now use~\eqref{eq:weakLL1} and the fact that 
$$\lla U^{\phi}_{T-s}, \Delta_{\theta}p_{\ep}(x,\cdot)\rra=
\Delta_{\theta} S_{\ep} U^{\phi}_{T-s}(x),\;\; \forall x\in \IR,
$$
 to get  
\begin{eqnarray}
\label{eq:10_1_1}
\lefteqn{\Delta_{\theta} S_{\ep} U^{\phi}_{T-s}(x) + \frac{\partial S_{\ep} U^{\phi}_{T-s}}{\partial s}(x)}
\\
\nonumber
 &=&\frac{1}{2}U^{\phi}_{T-s}(0)^2 p_{\ep}(x),\;\; \forall x\in \IR.
\end{eqnarray}
Then we have 
\begin{eqnarray}
\label{eq:DUAL2}
\la X_t, S_{\ep}U^{\phi}_{T-t} \ra
&=& \la X_0, S_{\ep} U^{\phi}_{T} \ra
\\
\nonumber
& &
+\int_{0}^{t} \frac{1}{2} S_{\ep} X_s(0)U^{\phi}_{T-s}(0)^2
 ds
\\
\nonumber
& &+\int_{0}^{t}\la g(s,\cdot), S_{\ep} U^{\phi}_{T-s}\ra\, ds
\\
\nonumber
& &+\int_{0}^{t}\sqrt{X(s,0)}S_{\ep} U^{\phi}_{T-s}(0) \,dB_s
\quad\forall t\ge 0, \quad {\rm a.s.}
\end{eqnarray}

By the It\^o formula we easily get 
\begin{eqnarray}
\lefteqn{\IE\left[ e^{-\la X_t, S_{\ep}U^{\phi}_{T-t} \ra -\int_{t}^{T}\la g(s,\cdot), S_{\ep} U^{\phi}_{T-s}\ra\, ds }
\right]}
\\
\nonumber &=&
\IE\left[ e^{-\la X_0, S_{\ep}U^{\phi}_{T}\ra - \int_{0}^{T}\la g(s,\cdot), S_{\ep} U^{\phi}_{T-s}\ra\, ds
}\right]
\\
\nonumber &&\mbox{}+\IE\left[\int_0^t 
e^{-\la X_0, S_{\ep}U^{\phi}_{T-s}\ra - \int_{s}^{T}\la g(r,\cdot), S_{\ep} U^{\phi}_{T-r}\ra \,dr} 
\right.
\\
\nonumber
&& \;\;\;\;\left. \mbox{}\times \frac{1}{2}  
\left\{ X(s,0)(S_{\ep} U^{\phi}_{T-s}(0))^2- S_{\ep} X_s(0)U^{\phi}_{T-s}(0)^2   \right\}\,ds\right].
\end{eqnarray}
Now let $\ep\rightarrow 0$. Use the continuity of $X_t(\cdot)$, and $U^{\phi}_{t}(\cdot)$ and the dominated convergence 
theorem to get 
\begin{eqnarray}
\lefteqn{\IE\left[ e^{-\la X_t, U^{\phi}_{T-t} \ra -\int_{t}^{T}\la g(s,\cdot),  U^{\phi}_{T-s}\ra\, ds}\right]}
\\
\nonumber&=&
 \IE\left[ e^{-\la X_0, U^{\phi}_{T}\ra - \int_{0}^{T}\la g(s,\cdot), U^{\phi}_{T-s}\ra\, ds
}\right],
\end{eqnarray}
for all $t\in [0,T]$. By taking $t=T$, we get 
\begin{eqnarray}
\IE\left[ e^{-\la X_T, \phi \ra}\right]
&=&
 \IE\left[ e^{-\la X_0, U^{\phi}_{T}\ra - \int_{0}^{T}\la g(s,\cdot), U^{\phi}_{T-s}\ra\, ds
}\right], \;\;\forall T>0,
\end{eqnarray}
and we are done. 
\end{proof}

\paragraph{Proof of Theorem~\ref{thm:unique_weak_SPDE}}
By Lemma~\ref{lem:uniq_onedim} we immediately get that for any solution $X_{\cdot}$ of 
(\ref{equt:4_weak})  in $C(\IR_+\,,\ctem^+)$ the law of 
 $X_t\in \ctem^+$ is unique for any $t>0$, or in other words the uniqueness of one-dimensional distributions holds. 
 This is true for any initial conditions  $X_0\in \ctem^+$.
By standard argument this implies also uniqueness of one dimensional distribuitions (see e.g. Theorem 4.4.2 and its 
 proof in~\cite{bib:kur86}), and hence weak uniqueness for solutions of~(\ref{equt:4_weak})  in $C(\IR_+\,,\ctem^+)$.
 
\hfill$\square$

\medskip

\paragraph{Proof of Theorem~\ref{thm:weakUniq}}
This follows immediately by corresspondence of solutions to (\ref{equt:4_weak}) and \eqref{equt:cat_pos}
(see Lemma~\ref{thm:nonLipschitzexistence} and discussion after it). 

\hfill$\square$

\medskip

Note that, in fact, $U^{\phi}$ is the so-called log-Laplace equation for the catalytic superprocess with single point catalyst at $0$, and the 
motion process generated by $\Delta_{\theta}$. So in principal, we could prove
Theorem~\ref{thm:unique_weak_SPDE}
by showing that such 
any such superprocess is in fact a weak solution to (\ref{equt:4_weak}).
We gave the more detailed  proof just for the sake of 
completeness.

\section{Uniqueness: preliminary estimates}

\label{section:auxiliary}
In this section we will develop machinery for proving Theorem~\ref{thm:uniquekunbounded}.
The proof follows a similar approach to that in~\cite{bib:msp05}.

Let $\rho$ be a strictly
increasing function on
$\IR_{+}$ such that
\begin{equation}\label{rhobnd}
\rho(x)\ge \sqrt x.
\end{equation}
and
\begin{equation}
\label{rhocond}
\int_{0+} \rho^{-2}(x) dx = \infty.
\end{equation}
As in the proof of Yamada and Watanabe
\cite{bib:YW71}, we may define  a sequence of functions $\phi_{n}$ in the
following way.  First, let $a_{n} \downarrow 0$ be a strictly
decreasing sequence such that $a_{0}=1$, and
\begin{equation}
\label{acond}
\int_{a_{n}}^{a_{n-1}} \rho^{-2}(x) dx = n.
\end{equation}
Second, we define functions $\psi_{n} \in C^{\infty}_{c}(\IR)$ such that
$supp(\psi_{n}) \subset  (a_{n}, a_{n-1})$, and that
\begin{equation}
\label{psicond}
0 \leq \psi_{n}(x) \leq \frac{2 \rho^{-2}(x)}{n}\leq {2\over nx}
\quad \mbox{ for all $x \in \IR$ as well as } \quad \int_{a_{n}}^{a_{n-1}} \psi_{n}(x) dx =1.
\end{equation}
Finally, set
\begin{equation}
\label{def:phi}
\phi_{n}(x) = \int_{0}^{|x|} \int_{0}^{y} \psi_{n}(z) dz dy.
\end{equation}
From this it is easy to see that $\phi_{n}(x) \uparrow |x|$ uniformly in $x\geq 0.$
Note that each $\psi_{n}$ and thus also each $\phi_{n}$ is
identically zero in a neighborhood of zero. This implies that
$\phi_{n} \in C^{\infty}(\IR)$ despite the absolute value in its
definition. We have
\begin{eqnarray*}
\label{phidiff1}
\phi_{n}'(x) &=& \sgn(x) \int_{0}^{|x|}  \psi_{n}(y) dy,\\
\label{phidiff2}
\phi_{n}''(x) &=&  \psi_{n}(|x|).
\end{eqnarray*}
Thus, $|\phi_{n}'(x)| \leq 1$, and
$\int \phi_{n}''(x) h(x) dx \rightarrow h(0)$ for any function $h$ which
is continuous at zero.

\smallskip
Now let $X^{1}$ and $X^{2}$ be two solutions of (\ref{equt:4}) with sample
paths in $C(\IR_+,\ctem)$ a.s.,
 with the same
initial condition, $X^{1}(0)=X^{2}(0)=X_0\in \ctem$, and the same 
Brownian motion $B$
in the setting of Theorem
\ref{thm:uniquekunbounded}.  
  Define $\tilde{X} \equiv X^{1} - X^{2}.$ 
 Set
$\Phi_{x}^{m}( y) = p^{\theta}_{m^{-1/\alpha}}(x,y)$.
\noindent
Note that for any $x\in\IR$,  $\Phi_{x}^{m}( \cdot)\in D(\Delta_{\theta})$.  
 Use~\eqref{weakheatb} to get the the semimartingale decomposition of 
$\la \tilde{X}_{t}, \Phi^{m}_{x} \ra=\la  X^{1}_t - X^{2}_t, \Phi^{m}_{x} \ra$. Then apply 
It\^{o}'s Formula to the semimartingale
$\la \tilde{X}_{t}, \Phi^{m}_{x} \ra$ 
to get
\begin{align*}
& \phi_{n}(\langle \tilde{X}_{t}, \Phi_{x}^{m} \ra)\\
&\qquad= \int_{0}^{t} \int_{\IR} \phi_{n}'(\langle \tilde{X}_{s}, \Phi_{x}^{m} \ra)
\left( \sigma(X^{1}(s,0)) - \sigma(X^{2}(s,0)) \right)
\Phi_{x}^{m}(0)  \,dB_s\\
& \qquad\quad+\int_{0}^{t} \phi_{n}'(\langle \tilde{X}_{s}, \Phi_{x}^{m} \ra)
\langle \tilde{X}_{s},  \Delta_{\theta} \Phi_{x}^{m} \ra ds\\
& \qquad\quad+ \frac{1}{2}
\int_{0}^{t} 
\psi_{n}(|\langle \tilde{X}_{s}, \Phi_{x}^{m} \ra|)
\left( \sigma(X^{1}(s,0)) - \sigma(X^{2}(s,0)) \right)^2
\Phi_{x}^{m}(0)^2 ds.
\end{align*}
We integrate this function of $x$ against another non-negative test
function
$\Psi \in C([0,t], D(\Delta_{\theta}))$ such that 
\begin{eqnarray}
\label{equt:33}
\Psi_s(0)>0,\;\forall s\geq 0\;\;{\rm and}\;\;
\sup_{s\leq t} \left|\int_{\IR} |x|^{-\theta}\left(\frac{\partial \Psi_s(x)}{\partial x}\right)^2\,dx\right|<\infty,\;\;\forall t>0,
\end{eqnarray}
and $s\mapsto\frac{\partial \Psi_s(\cdot)}{\partial s}\in C(\IR_+,\crap).$
Also assume
$\Gamma(t)\equiv\{x:\exists s\le t,\,\Psi_s(x)>0\}
\subset B(0,J(t))$  for  some $J(t)>0.$ We then obtain by the
classical and stochastic version of Fubini's Theorem, and arguing as
in the proof of Proposition II.5.7 of \cite{bib:perk99} to handle the time
dependence in $\Psi$, that for any $t\geq 0$,
\begin{align}
\label{eq:Iparts}
& \left\langle \phi_{n}(\langle \tilde{X}_{t}, \Phi_{.}^{m} \ra), \Psi_t
\right\ra\\
\nonumber
&\qquad= \int_{0}^{t}
\langle \phi_{n}'(\langle \tilde{X}_{s}, \Phi_{\cdot}^{m} \ra) \Phi_{\cdot}^{m}(0)
 , \Psi_s \ra \left( \sigma(X^{1}(s,0)) - \sigma(X^{2}(s,0)) \right)\,
dB_s\\
\nonumber
& \qquad\qquad+ \int_{0}^{t} \langle \phi_{n}'(\langle \tilde{X}_{s}, \Phi_{.}^{m} \ra)
\langle \tilde{X}_{s},  \Delta_{\theta} \Phi_{.}^{m} \ra, \Psi_s \ra ds\\
\nonumber
& \qquad\qquad+ \frac{1}{2}
\int_{0}^{t} \langle
\psi_{n}(|\langle \tilde{X}_{s}, \Phi_{\cdot}^{m} \ra|)\Phi^m_\cdot(0)^2,\Psi_s\rangle
\left( \sigma(X^{1}(s,0)) - \sigma(X^{2}(s,0)) \right)^2\,ds
\\
\nonumber
&\qquad\qquad+\int_0^t\langle\phi_n(\langle\tilde
X_s,\Phi^m_\cdot\ra) ,\dot\Psi_s\ra\,ds\\
\nonumber
&\qquad \equiv I_{1}^{m,n}(t) + I_{2}^{m,n}(t) + I_{3}^{m,n}(t)+I_{4}^{m,n}(t).
\end{align}

\noindent

We need a calculus lemma.  For $f\in C^2(\IR)$, let $\Vert
D^2f\Vert_\infty=\Vert{\partial^2 f\over \partial
x^2}\Vert_\infty$.

\begin{lemma}\label{calc} Let $f\in C_c^2(\IR)$ be non-negative and not
identically zero.  Then
$$\sup\Bigl\{\Bigl({{\partial f\over \partial x}(x)}\Bigr)^2
f(x)^{-1}:f(x)>0\Bigr\}\le 2\Vert D^2f\Vert_\infty.$$
\end{lemma}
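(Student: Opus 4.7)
The plan is to exploit non-negativity of $f$ together with a second-order Taylor expansion, which forces the derivative at a point where $f>0$ to be controlled by the value of $f$ there and the bound on $f''$.

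Fix $x$ with $f(x)>0$. Since $f \in C_c^2(\IR)$, for any $h \in \IR$ Taylor's theorem with remainder gives
\begin{equation*}
f(x+h) = f(x) + f'(x) h + \tfrac{1}{2} f''(\xi) h^2
\end{equation*}
for some $\xi$ between $x$ and $x+h$. Using $f(x+h) \ge 0$ and $|f''(\xi)| \le \|D^2 f\|_\infty$, I would rearrange this to obtain
\begin{equation*}
f(x) + f'(x) h + \tfrac{1}{2} \|D^2 f\|_\infty h^2 \ge 0 \quad \text{for all } h \in \IR.
\end{equation*}
This is a quadratic in $h$ which is non-negative everywhere, so its discriminant must be non-positive:
\begin{equation*}
(f'(x))^2 - 4 \cdot f(x) \cdot \tfrac{1}{2} \|D^2 f\|_\infty \le 0,
\end{equation*}
which immediately rearranges to $(f'(x))^2 f(x)^{-1} \le 2 \|D^2 f\|_\infty$. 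Taking the supremum over all $x$ with $f(x)>0$ gives the claim.

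There is no real obstacle here: the only subtle point is that the bound $\|D^2 f\|_\infty$ is finite, which is guaranteed because $f \in C_c^2(\IR)$ and hence $f''$ is continuous with compact support. The non-negativity and compact support hypothesis on $f$ is essential — without non-negativity one could not pass from the Taylor remainder to a globally non-negative quadratic, and without the $C^2$ hypothesis the pointwise second-order expansion would not be available.
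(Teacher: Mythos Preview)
Your proof is correct and self-contained. The paper does not actually prove this lemma but simply refers the reader to Lemma~2.1 of \cite{bib:msp05}; your Taylor-expansion/discriminant argument is precisely the standard proof (and is essentially what appears in that reference), so there is nothing to compare.

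One very minor point you might make explicit: the discriminant argument requires the leading coefficient $\tfrac12\|D^2f\|_\infty$ to be strictly positive, which holds because a $C_c^2$ function with $f''\equiv 0$ is affine and hence identically zero, contrary to hypothesis.
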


\begin{proof} See Lemma~2.1 of~\cite{bib:msp05}.

\end{proof}

We now consider the expectation of
expression (\ref{eq:Iparts}) stopped at a stopping time $T,$
that we will choose later on. Ultimately we will use the following to show that the contributions of $I^{m,n}_1$, $I^{m,n}_2$, and $I^{m,n}_4$ to this expectation disappear in the limit. $I^{m,n}_3$ is where the classical Yamada-Watanabe calculation
comes into play, to be analyzed in Section~\ref{section:uniquenessargument}
\begin{lemma}\label{lem:conv} Assume the hypotheses of Theorem~\ref{thm:uniquekunbounded}. For any stopping time $T$ and constant $t\ge0$ we
have:
\begin{enumerate}
\item $\displaystyle\IE(I_1^{m,n}(t\wedge T))=0\hbox{ for all }m,n.$
\item 
$\displaystyle\limsup_{m,n\rightarrow \infty} \IE( I_{2}^{m,n}(t \wedge T))
\le  \IE \Big( \int_{0}^{t \wedge T} \int_{\IR}
|\tilde{X}(s,x)|  \Delta_{\theta}\Psi_s(x)dx ds \Big).$
\item 
$\displaystyle\lim_{m,n\rightarrow \infty} \IE(I_4^{m,n}(t\wedge
T))=\IE\Bigl(\int_0^{t\wedge T}|\tilde X(s,x)|\dot\Psi_s(x)\,ds\Bigr).$
\end{enumerate}
\end{lemma}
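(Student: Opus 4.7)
The plan is to handle each of the three parts separately, with parts (1) and (3) following from standard moment and dominated-convergence arguments, while (2) requires a two-step integration by parts to transfer $\Delta_\theta$ from $\Phi^m$ onto $\Psi$. For (1), I would observe that the integrand of the $dB_s$-integral defining $I_1^{m,n}$ is bounded in absolute value by $\la\Phi^m_\cdot(0),\Psi_s\ra\bigl(|\sigma(X^1(s,0))|+|\sigma(X^2(s,0))|\bigr)$, since $|\phi_n'|\le 1$. Because $\Psi$ has compact support and $\Phi^m_x(0)=p^\theta_{m^{-1/\alpha}}(x)$ is a probability density in $x$, the inner product is uniformly bounded in terms of $\|\Psi_s\|_\infty$. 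The linear growth of $\sigma$ combined with the moment estimate \eqref{eq:uniformmoments2a} of Proposition~\ref{prop} applied to $X^1,X^2$ then yields a uniform $L^2$-bound on the integrand, so $I_1^{m,n}(\cdot\wedge T)$ is a true square-integrable martingale with mean zero.

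For (2), set $f^m_s(x):=\la\tilde X_s,\Phi^m_x\ra=S_{m^{-1/\alpha}}\tilde X_s(x)$; symmetry of $p^\theta$ gives $\la\tilde X_s,\Delta_\theta\Phi^m_\cdot\ra(x)=\Delta_\theta f^m_s(x)$. Exploiting the divergence form $\Delta_\theta=\frac{2}{(2+\theta)^2}\partial_x(|x|^{-\theta}\partial_x)$, two successive integrations by parts in $x$ (using $\phi_n'(f)\partial_x f=\partial_x\phi_n(f)$ on the second pass) produce
\begin{align*}
\int\phi_n'(f^m_s)\,\Delta_\theta f^m_s\,\Psi_s\,dx
&=-\frac{2}{(2+\theta)^2}\int\phi_n''(f^m_s)(\partial_x f^m_s)^2|x|^{-\theta}\Psi_s\,dx\\
&\quad{}+\int\phi_n(f^m_s)\,\Delta_\theta\Psi_s\,dx.
\end{align*}
Since $\phi_n''=\psi_n\ge 0$ and $\Psi_s\ge 0$, the first term is non-positive and can be discarded, yielding $I_2^{m,n}(t\wedge T)\le\int_0^{t\wedge T}\la\phi_n(f^m_s),\Delta_\theta\Psi_s\ra\,ds$. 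Letting $m,n\to\infty$, the uniform convergence $\phi_n(u)\uparrow|u|$ together with continuity of $\tilde X$ on the compact set $\Gamma(t)$ gives $\phi_n(f^m_s)\to|\tilde X_s|$ pointwise, and the tempered moment bound of Proposition~\ref{prop}(a) provides an integrable dominant on the support of $\Psi$, so dominated convergence delivers the stated limsup. The main obstacle is the integration by parts itself: the weight $|x|^{-\theta}$ is singular at $0$ and $\Psi_s(0)>0$, so boundary contributions at $x=0$ must vanish. This is exactly what assumption \eqref{equt:33}, that $|x|^{-\theta}(\partial_x\Psi_s)^2$ is integrable, arranges; on the $f^m_s$ side, smoothing by $S_{m^{-1/\alpha}}$ with $m^{-1/\alpha}>0$ guarantees enough regularity at $0$ for $|x|^{-\theta}\partial_x f^m_s$ to remain bounded, so the boundary terms indeed disappear.

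Part (3) is a direct application of dominated convergence: $\phi_n(f^m_s(x))\to|\tilde X_s(x)|$ pointwise as $m,n\to\infty$ by the same reasoning as in (2), and the integrand is dominated by $S_{m^{-1/\alpha}}|\tilde X_s|(x)\,|\dot\Psi_s(x)|$, where $\dot\Psi_s\in\crap$ and the tempered moment bound of Proposition~\ref{prop}(a) controls $S_{m^{-1/\alpha}}|\tilde X_s|$ uniformly in $m$. Fubini and dominated convergence complete the argument.
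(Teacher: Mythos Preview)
Your arguments for parts (a) and (c) match the paper's closely. For part (b), however, your route is genuinely different from---and noticeably simpler than---the paper's.

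The paper performs the same first integration by parts as you, obtaining the non-positive term $-2\alpha^2\int\psi_n(|f^m_s|)(\partial_x f^m_s)^2|x|^{-\theta}\Psi_s\,dx$ and a cross term $-2\alpha^2\int\phi_n'(f^m_s)|x|^{-\theta}(\partial_x f^m_s)(\partial_x\Psi_s)\,dx$. But in its second integration by parts the paper treats $\partial_x f^m_s\,dx$ as the differential (antiderivative $f^m_s$), producing three pieces $I^{m,n}_{2,1}+I^{m,n}_{2,2}+I^{m,n}_{2,3}$, where $I^{m,n}_{2,2}$ carries a factor $\psi_n(f^m_s)\,f^m_s\,\partial_x f^m_s\,\partial_x\Psi_s$ of indeterminate sign. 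Controlling $I^{m,n}_{2,1}+I^{m,n}_{2,2}$ then requires the case decomposition on the sets $A^{+,s},A^{-,s},A^{0,s}$ and the calculus Lemma~\ref{calc} bounding $(\partial_x\Psi_s)^2/\Psi_s$ by $2\|D^2\Psi_s\|_\infty$, together with assumption~\eqref{equt:33}, to show the combination is $O(a_n/n)$.

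You instead use the chain-rule identity $\phi_n'(f^m_s)\partial_x f^m_s=\partial_x[\phi_n(f^m_s)]$ before the second integration by parts, so the cross term becomes exactly $\int\phi_n(f^m_s)\Delta_\theta\Psi_s\,dx$ with no residual $I_{2,2}$-type piece. The negative term is dropped outright and the remaining term converges by the same dominated-convergence/uniform-integrability reasoning the paper uses for $I^{m,n}_{2,3}$. Your decomposition thus bypasses Lemma~\ref{calc} and the $A^{\pm,s}$ analysis entirely. Both second integrations by parts require the same boundary behaviour at $x=0$ (finiteness of $|x|^{-\theta}\partial_x\Psi_s$, which is forced by $\Psi_s\in D(\Delta_\theta)$), so there is no hidden extra hypothesis in your version. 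The paper's approach is inherited from \cite{bib:msp05}; yours is the more economical argument here.
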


\begin{proof}
(a) Let $g_{m,n}(s)=\langle\phi_n'(\langle \tilde
X_s,\Phi^m_\cdot\rangle)\Phi^m_\cdot(0),\Psi_s\rangle$.  Note first that
$I_1^{m,n}(t\wedge T)$ is a continuous local martingale with square function
\begin{eqnarray}
\nonumber
\langle I_1^{m,n}\rangle_{t\wedge T}&=&\int_0^{t\wedge
T}g_{m,n}(s)^2(\sigma(X^1(s,0))-\sigma(X^2(s,0)))^2
ds\\
\nonumber &\le& C\int_0^{t\wedge T}
 g_{m,n}(s)^2(|X^1(s,0)|+|X^2(s,0)|+2)^2\,ds.
\end{eqnarray}
An easy calculation shows that $|g_{m,n}(s,y)|\le \Vert \Psi\Vert_\infty$, so by (\ref{eq:uniformmoments2a})
\[\IE(\langle I_1^{m,n}\rangle_{t\wedge T})\le C(t)<\infty\ \forall t>0.\]
This shows $I_1^{m,n}(t\wedge T)$ is a square integrable martingale and so has
mean $0$, as required.  

(b) We have  to rewrite $I_{2}^{m,n}$. 
Denote by $\Delta_{x,\theta}$ the $\theta$-Laplacian acting with respect to $x.$
We know by symmetry that 
\[ \Delta_{y,\theta} \Phi_{x}^{m}(y)=\Delta_{x,\theta} \Phi_{x}^{m}(y).\]
Hence, since $\tilde{X}_{s}$ is locally integrable and continuous
 we have for $|x|\le J(t)$,
\[
\label{Deltaex}
\int_{\IR} \tilde{X}(s,y)   \Delta_{y,\theta}\Phi^{m}_x(y) dy
= \int_{\IR} \tilde{X}(s,y)   \Delta_{x,\theta}\Phi^{m}_x(y) dy
=  \Delta_{x,\theta} \int_{\IR} \tilde{X}(s,y) \Phi^{m}_x(y) dy,
\]
for all $m.$
This implies for any $t\geq 0$,
\begin{eqnarray*}
\nonumber
I_{2}^{m,n}(t)&=&
\int_{0}^{t}\int_{\IR}\phi_{n}'(\la \tilde{X}_{s}, \Phi_{x}^{m} \rangle)
   \Delta_{x,\theta}\left(\la \tilde{X}_{s},
\Phi_{x}^{m} \rangle\right) \Psi_s(x) dx ds\\
\nonumber &=&
- 2\alpha^2
\int_{0}^{t}\int_{\IR} \frac{\partial}{\partial x}
\left(\phi_{n}'(\la \tilde{X}_{s}, \Phi_{x}^{m} \rangle)\right)
|x|^{-\theta}\frac{\partial}{\partial x}
\left( \la \tilde{X}_{s}, \Phi_{x}^{m} \rangle \right) \Psi_s(x) dx ds\\
\nonumber & &- 2\alpha^2
\int_{0}^{t}\int_{\IR} \phi_{n}'(\la \tilde{X}_{s}, \Phi_{x}^{m} \rangle)
|x|^{-\theta}\frac{\partial}{\partial x}
\left( \la \tilde{X}_{s}, \Phi_{x}^{m} \rangle \right)
\frac{\partial}{\partial x}\Psi_s(x) dx ds\\
\nonumber &=&-2\alpha^2  \int_{0}^{t}\int_{\IR}
\psi_{n}(|\la \tilde{X}_{s}, \Phi_{x}^{m} \rangle|)
|x|^{-\theta}\left( \frac{\partial}{\partial x}
\la \tilde{X}_{s}, \Phi_{x}^{m} \rangle \right)^{2} \Psi_s(x) dx ds\\
\nonumber & & -2\alpha^2
\int_{0}^{t}\int_{\IR}\phi_{n}'(\la \tilde{X}_{s}, \Phi_{x}^{m} \rangle)
|x|^{-\theta}\frac{\partial}{\partial x}
\left( \la \tilde{X}_{s}, \Phi_{x}^{m} \rangle  \right)
\frac{\partial}{\partial x}\Psi_s(x) dx ds\\
\nonumber
&=& -  2\alpha^2   \int_{0}^{t}\int_{\IR}
\psi_{n}(|\la \tilde{X}_{s}, \Phi_{x}^{m} \rangle|)
|x|^{-\theta}\left( \frac{\partial}{\partial x}
\la \tilde{X}_{s}, \Phi_{x}^{m} \rangle \right)^{2} \Psi_s(x) dx ds\\
\nonumber
& &+2\alpha^2\int_{0}^{t}\int_{\IR}
\psi_{n}(\la \tilde{X}_{s},\Phi_{x}^{m}\rangle)|x|^{-\theta}\frac{\partial}{\partial x}
(\la \tilde{X}_{s},\Phi_{x}^{m}\rangle)
\la \tilde{X}_{s},\Phi_{x}^{m}\rangle \frac{\partial}{\partial x} \Psi_s(x)
dx ds\\
\nonumber
& &+ \int_{0}^{t}\int_{\IR}
\phi_{n}'(\la \tilde{X}_{s},\Phi_{x}^{m}\rangle)
\la \tilde{X}_{s},\Phi_{x}^{m}\rangle
 \Delta_{\theta} \Psi_s(x)dx ds\\
\label{secondterm}
&=& \int_{0}^{t} I_{2,1}^{m,n}(s) + I_{2,2}^{m,n}(s) + I_{2,3}^{m,n}(s) ds.
\end{eqnarray*}
Above, we have used that $\phi_{n}''=\psi_{n}$ and we have repeatedly
used integration by parts, the product rule as well as the chain rule on
$\phi_{n}'(\la \tilde{X}_{s}, \Phi_{x}^{m} \rangle).$
In order to deal with the various parts of $I_{2}^{m,n}$
we will first jointly consider $I_{2,1}^{m,n}$ and $I_{2,2}^{m,n}.$
For fixed $s$ we define a.s.,
\begin{eqnarray*}
A^{s}&=& \Bigl\{ x: \left( \frac{\partial}{\partial x}
\la \tilde{X}_{s}, \Phi_{x}^{m} \rangle \right)^{2} \Psi_s(x) \leq
\la \tilde{X}_{s}, \Phi_{x}^{m} \rangle \frac{\partial}{\partial x}
\la \tilde{X}_{s}, \Phi_{x}^{m} \rangle \frac{\partial}{\partial x}
\Psi_s(x) \Bigr\} \cap \{x:\Psi_s(x)>0\}\\
&=& A^{+,s}\cup A^{-,s} \cup A^{0,s},
\end{eqnarray*}
where
\begin{eqnarray*}
A^{+,s}&=& A^{s} \cap \{ \frac{\partial}{\partial x}
\la \tilde{X}_{s}, \Phi_{x}^{m} \rangle >0\},\\
A^{-,s} &=& A^{s} \cap \{ \frac{\partial}{\partial x}
\la \tilde{X}_{s}, \Phi_{x}^{m} \rangle <0\},\\
A^{0 ,s}&=& A^{s}  \cap \{ \frac{\partial}{\partial x}
\la \tilde{X}_{s}, \Phi_{x}^{m} \rangle =0\}.
\end{eqnarray*}
By~(\ref{equt:33}) we can find $\ep>0$ sufficiently small such that 
\begin{eqnarray}
\label{equt:34}
 B(0,\ep)\subset \Gamma(t),\;\;{\rm and}\;\; \inf_{s\leq t, x\in B(0,\ep)} \Psi_s(x)>0.
\end{eqnarray}
On $A^{+,s}$ we have
\begin{equation*}
0 <\Bigl(\frac{\partial}{\partial x}
\la \tilde{X}_{s}, \Phi_{x}^{m} \rangle \Bigr)\Psi_s(x)\leq
\la \tilde{X}_{s}, \Phi_{x}^{m} \rangle \frac{\partial}{\partial x}
\Psi_s(x),
\end{equation*}
and therefore for any $t\geq 0$,
\begin{align*}
&\int_0^t\int_{A^{+,s}} \psi_{n}(|\la \tilde{X}_{s}, \Phi_{x}^{m} \rangle
|)|x|^{-\theta}
\la \tilde{X}_{s}, \Phi_{x}^{m} \rangle \frac{\partial}{\partial x} \Psi_s(x)
\frac{\partial}{\partial x}
\la \tilde{X}_{s}, \Phi_{x}^{m} \rangle dx\,ds\\
&\qquad\leq \int_0^t\int_{A^{+,s}} \psi_{n}(|\la \tilde{X}_{s}, \Phi_{x}^{m}
\rangle |)|x|^{-\theta}
\la \tilde{X}_{s}, \Phi_{x}^{m} \rangle^{2} \frac{(\frac{\partial}
{\partial x} \Psi_s(x))^{2}}{\Psi_s(x)} dx\,ds\\
&\qquad\leq \int_0^t\int_{A^{+,s}} \frac{2}{n}
1_{\{a_{n-1} \leq |\la \tilde{X}_{s}, \Phi_{x}^{m} \rangle| \leq a_{n} \}}
|x|^{-\theta}|\la \tilde{X}_{s}, \Phi_{x}^{m} \rangle| \frac{(\frac{\partial}
{\partial x} \Psi_s(x))^{2}}{\Psi_s(x)} dx\,ds\quad\hbox{by (\ref{psicond})}
\\ &\qquad\leq \frac{2a_{n}}{n} \int_0^t\int_{\IR} 1_{\{\Psi_s(x)>0\}}|x|^{-\theta}
\frac{(\frac{\partial}{\partial x} \Psi_s(x))^{2}}{\Psi_s(x)} dx\,ds\\
&\qquad\le \frac{2a_n}{n}\int_0^t 
 \left(\int_{B(0,\ep)}
 \frac{(\frac{\partial}{\partial x} \Psi_s(x))^{2}}{\Psi_s(x)}  |x|^{-\theta} \,dx
+
2\Vert
D^2\Psi_s\Vert_\infty\int_{\Gamma\smallsetminus B(0,\ep)} |x|^{-\theta} \,dx\,\right)ds\\
&\qquad\equiv {2a_n\over n}C(\Psi,t),
\end{align*}
where (\ref{equt:33}), (\ref{equt:34})   and  Lemma~\ref{calc} are used in the last two lines.
Similarly, on the set $A^{-,s}_{i},$
\begin{equation*}
0 > \Big(\frac{\partial}{\partial x}
\la \tilde{X}_{s}, \Phi_{x}^{m} \rangle \Big)\Psi_s(x) \geq
\la \tilde{X}_{s}, \Phi_{x}^{m} \rangle \frac{\partial}{\partial x}
\Psi_s(x).
\end{equation*}
Hence, with the same calculation
\begin{align*}
&\int_0^t\int_{A^{-,s}} \psi_{n}(|\la \tilde{X}_{s}, \Phi_{x}^{m} \rangle
|)|x|^{-\theta}
\la \tilde{X}_{s}, \Phi_{x}^{m} \rangle \frac{\partial}{\partial x} \Psi_s(x)
\frac{\partial}{\partial x}
\la \tilde{X}_{s}, \Phi_{x}^{m} \rangle dx\,ds\\
&\qquad\leq \frac{2a_{n}}{n}\int_0^t \int_{\IR} 1_{\{\Psi_s(x)>0\}}|x|^{-\theta}
\frac{(\frac{\partial}{\partial x} \Psi_s(x))^{2}}{\Psi_s(x)} dx\,ds\\
&\qquad\leq {2a_n\over n}C(\Psi,t).
\end{align*}
Finally, for any $t\geq 0$,
\begin{equation*}
\int_0^t\int_{A^{0,s}} \psi_{n}(|\la \tilde{X}_{s}, \Phi_{x}^{m} \rangle |)
|x|^{-\theta}
\la \tilde{X}_{s}, \Phi_{x}^{m} \rangle \frac{\partial}{\partial x} \Psi_s(x)
\frac{\partial}{\partial x}
\la \tilde{X}_{s}, \Phi_{x}^{m} \rangle dx\,ds=0,
\end{equation*}
and we conclude that
\begin{equation*}
\IE( I_{2,1}^{m,n}(t \wedge T)
+ I_{2,2}^{m,n}(t \wedge T)) \leq 4\alpha^2C(\Psi,t)
 \frac{a_{n}}{n},
\end{equation*}
which tends to zero as $n \rightarrow \infty.$ 

For $I_{2,3}^{m,n}$
recall that $\phi_{n}'(X) X \uparrow |X|$ uniformly in $X$ as
$n \rightarrow \infty,$
and that $\la \tilde{X}_{s}, \Phi_{x}^{m} \rangle$
tends to $\tilde{X}(s,x)$ as $m \rightarrow
\infty$ for all $s,x$ a.s. by the a.s. continuity of $\tilde X$.
This implies
that
$\phi_{n}'(\la \tilde{X}_{s}, \Phi_{x}^{m} \rangle)
\la \tilde{X}_{s}, \Phi_{x}^{m} \rangle
\rightarrow |\tilde{X}(s,x)|$
pointwise a.s.
as $m,n \rightarrow \infty,$ where it is unimportant how we take the limit.
We also have the bound
\begin{eqnarray}
\label{phinbound}
|\phi_{n}'(\la \tilde{X}_{s}, \Phi_{x}^{m} \rangle)
\la \tilde{X}_{s}, \Phi_{x}^{m} \rangle|
&\leq&
|\la \tilde{X}_{s}, \Phi_{x}^{m} \rangle|
\leq \la |\tilde{X}_{s}|, \Phi_{x}^{m} \rangle.
\end{eqnarray}
The a.s. continuity of $\tilde X$ implies a.s. convergence for all $s,x$ of
$\langle|\tilde X_s|,\Phi^m_x\rangle$ to $|\tilde X(s,x)|$ as $m\to \infty$. 
Jensen's Inequality and (\ref{eq:uniformmoments2a}) show
that
$\langle|\tilde X_s|,\Phi^m_x\rangle \,$ is $L^p$
bounded on $([0,t]\times B(0,J(t))\times \Omega,ds\times
dx\times \IP)$ uniformly in $m.$  Therefore
\begin{equation}\label{ui}
\{\la |\tilde{X}_{s}|, \Phi_{x}^{m} \ra
:m\}\hbox{ is uniformly integrable
on }([0,t]\times B(0,J(t))\times \Omega).
\end{equation}
This gives uniform
integrability of $
\{|\phi_{n}'(\la \tilde{X}_{s}, \Phi_{x}^{m} \rangle)
\la \tilde{X}_{s}, \Phi_{x}^{m} \rangle|:m,n\}$ by our earlier bound
(\ref{phinbound}).
Since $\Psi_s=0$ off $B(0,J(t))$, this implies that
 $$\lim_{m,n\to\infty}\IE( I_{2,3}^{m,n}(t\wedge
T))=\IE\Bigl(\int_0^{t\wedge T}\int|\tilde X(s,x)|\Delta_{\theta} \Psi_s(x)dx\,ds\Bigr).$$  Collecting the pieces, we have shown that
(b) holds.

\medskip

\noindent(c) As in the above argument we have
\begin{equation}\label{absconv}
\phi_n(\la\tilde X_s,\Phi_x^m\rangle)\to|\tilde
X(s,x)|\hbox{ as }m,n\to\infty\hbox{ a.s. for all }x\hbox{ and all }s\le t.
\end{equation}
The uniform integrability in (\ref{ui})
and the bound $\phi_n(\la\tilde X_s,\Phi^m_x\rangle)\le\la|\tilde
X_s|,\Phi_x^m\rangle$ imply that
$$\{\phi_n(\la\tilde X_s,\Phi_x^m\rangle:n,m\}\hbox{ is uniformly integrable
on }[0,t]\times B(0,J(t))\times\Omega.$$
Therefore the result now follows from the above convergence and the
bound $$|\dot\Psi_s(x)|\le
C1_{\{|x|\le J(t)\}}.$$ \end{proof}

\section{Uniqueness: Theorem~\ref{thm:uniquekunbounded}}
\label{section:uniquenessargument}
Let $T_K=\inf\{t\ge 0: \sup_{x \in [-1,1]} (|X^1(t,x)|+|X^2(t,x)|)>K\}
\wedge K.$ 
Note that 
\begin{eqnarray}
\label{equt:120}
T_{K}\rightarrow \infty, \text{ $\IP$-a.s. as $K\to\infty$}\;
\end{eqnarray}
since each $X^i$ is continuous.
Also define a metric
$d$ by
\[d((t,x),(t',x'))= |t-t'|^{\alpha}+|x-x'|, t,t'\in\IR_+, x,x'\in\IR,\]
and set
\begin{eqnarray*}
 \lefteqn{Z_{K,N,\xi}\equiv\left\{(t,x)\in \Re_+\times\Re: t\leq T_K, \; |x|\leq 2^{-N\alpha},\; 
 \lmed t-\hat{t}\rmed\leq 2^{-N}, |x-\hat{x}|\leq 2^{-N\alpha},\;\right. }
\\
&&\left.
{\rm for\; some}\; 
(\hat{t},\hat{x})\in    [0,T_K]\times\Re\; {\rm satisfying}\; \lmed\wX(\hat{t},\hat{x})\rmed \leq 2^{-N\xi}\right\}
\end{eqnarray*}

We will now use the following key result on  improving the  H\"older continuity of
$\tilde X(t,x)$ when $\tilde X$ and $|x|$ are  small. We will assume this result in this section, where we will use it to show uniqueness.
We will prove Theorem~\ref{thm:H1} in Section~\ref{sec:proofofimprovement}.
\begin{theorem}
\label{thm:H1}
Assume the hypotheses of Theorem~\ref{thm:uniquekunbounded}, 
$\wX=X^1-X^2$,
where $X^i$ is a solution of (\ref{equt:4}) with sample paths in
$C(\IR_+,\ctem)$ a.s. for
$i=1,2$.
Let $\xi\in (0,1)$ satisfy
\begin{eqnarray}
\nonumber
\exists N_{\xi} &=&N_{\xi}(K,\omega)\in\IN\
\text{a.s. such that for any $N\ge N_{\xi}$, and any }
(t,x)\in Z_{K,N,\xi}
\end{eqnarray}
\begin{eqnarray}
\label{reghyp}
\left. \begin{array}{r}
|t'-t|\le 2^{-N}, t,t'\le
T_K
\\
|y-x|\le 2^{-N\alpha}        
       \end{array}
\right\}
&\Rightarrow& |\wX(t,x)-\wX(t',y)|\le
2^{-N\xi}.
\end{eqnarray}
Let $\frac{1}{2}-\alpha<\xi^1<[\xi \gamma + \frac{1}{2}-\alpha]\wedge 1$.
Then there is an $N_{\xi^1}=N_{\xi^1}(K,\omega,\xi)\in\IN$
a.s. such that for any $N\ge N_{\xi^1}$ in $\IN$ and any $(t,x)\in Z_{K,N,\xi^1}$
\begin{eqnarray}
\label{reghyp2}
\left. \begin{array}{r}
|t'-t|\le 2^{-N}, t,t'\le
T_K
\\
|y-x|\le 2^{-N\alpha}        
       \end{array}
\right\}
&\Rightarrow& |\wX(t,x)-\wX(t',y)|\le
2^{-N\xi^1}.
\end{eqnarray}
Moreover there are strictly positive constants
$R,\delta, c_{\ref{Nbnd}.1},c_{\ref{Nbnd}.2}$ depending only on
$(\xi,\xi^1)$, and $N(K) \in \IN$ which also depends on $K,$ such
that
\begin{eqnarray}
\label{Nbnd}
\IP(N_{\xi^1}\ge N)\le c_{\ref{Nbnd}.1} ( \IP(N_{\xi}\ge N/R)
+ K \exp(-c_{\ref{Nbnd}.2} 2^{N\delta}) )
\end{eqnarray}
provided that $N\geq N(K).$
\end{theorem}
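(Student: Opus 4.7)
The strategy is a bootstrap along the lines of~\cite{bib:msp05} and~\cite{bib:mp}. The point is that $\wX=X^1-X^2$ is a stochastic convolution of the kernel $p^\theta_{t-s}(x)$ against $\sigma(X^1(s,0))-\sigma(X^2(s,0))$, which by (\ref{eq:sigma_Hold}) is bounded by $L|\wX(s,0)|^\gamma$. On the event that the input regularity~(\ref{reghyp}) is in force at scale $N$, the hypothesis together with $(t,x)\in Z_{K,N,\xi^1}\subset Z_{K,N,\xi}$ forces $|\wX(s,0)|\le C\cdot 2^{-N\xi}$ for every $s$ in a window of width $\sim 2^{-N}$ about $t$, and hence $|\sigma(X^1(s,0))-\sigma(X^2(s,0))|\le CL\cdot 2^{-N\xi\gamma}$ on that window. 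Combined with the intrinsic $L^2$-modulus of the kernel differences, which via the self-similarity $p^\theta_u(x)=u^{-\alpha}f(x/u^\alpha)$ yields $\int_0^\infty K(s)^2\,ds\le C\cdot 2^{-N(1-2\alpha)}$ whenever $|t-t'|\le 2^{-N}$ and $|x-y|\le 2^{-N\alpha}$ (here $K(s)$ denotes the bracketed kernel difference $p^\theta_{(t-s)_+}(x)1_{\{s\le t\}}-p^\theta_{(t'-s)_+}(y)1_{\{s\le t'\}}$), this predicts an increment of order $2^{-N(\xi\gamma+1/2-\alpha)}$, matching the claimed gain.

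The implementation will proceed in three stages. First, I would discretize: by the a priori H\"older continuity of $\wX$ from Proposition~\ref{prop}(b) together with a dyadic chaining argument, it suffices to verify~(\ref{reghyp2}) on a $(2^{-N},2^{-N\alpha})$-lattice of pairs $(t,x),(t',y)$ inside $[0,T_K]\times[-C\cdot 2^{-N\alpha},C\cdot 2^{-N\alpha}]$, of which there are $O(K\cdot 2^N)$. Second, for each such pair I would introduce a stopping time $\tau=\tau_N(t)$ designed so that $\{\tau\ge t\vee t'\}$ contains the event $\{(t,x)\in Z_{K,N,\xi^1}\}\cap\{N_\xi\le N/R\}$ and so that the integrand $\sigma(X^1(\cdot,0))-\sigma(X^2(\cdot,0))$, truncated by $1_{\{s\le\tau\}}$, is uniformly bounded by $CL\cdot 2^{-N\xi\gamma}$. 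On this stopped event the continuous martingale in the representation of $\wX(t,x)-\wX(t',y)$ has quadratic variation at most $C\cdot 2^{-2N\xi\gamma}\cdot 2^{-N(1-2\alpha)}$, and the Bernstein inequality for continuous martingales produces the Gaussian tail
\[
\IP\bigl(|\wX(t,x)-\wX(t',y)|>2^{-N\xi^1}/4,\ \tau\ge t\vee t'\bigr)\le 2\exp\bigl(-c\cdot 2^{2N(\xi\gamma+1/2-\alpha-\xi^1)}\bigr).
\]
Third, a union bound over the $O(K\cdot 2^N)$ lattice pairs absorbs the polynomial prefactor into the exponent and yields the second term of~(\ref{Nbnd}) with any $\delta<2(\xi\gamma+1/2-\alpha-\xi^1)$. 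The complementary contribution $\{\tau<t\vee t'\}$ is by construction contained in $\{N_\xi\ge N/R\}$, accounting for the first term.

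The main obstacle is the careful construction of the stopping time and the verification that on $\{\tau\ge t\vee t'\}$ the stochastic integrand really is uniformly bounded as required: one must propagate the window bound $|\wX(s,0)|\le C\cdot 2^{-N\xi}$ from the anchor point $(\hat t,\hat x)$ to all $s$ in the time-window around $t$ by a telescoping application of~(\ref{reghyp}) at the finer scale $2^{-N/R}$, which is precisely why the ratio $R>1$ appears in the statement. Secondary technical points that will require care are the precise kernel estimate $\int_0^\infty K(s)^2\,ds\le C\cdot 2^{-N(1-2\alpha)}$ under the metric $d$, the passage from the discrete lattice to general $(t,x),(t',y)$ via dyadic chaining against the baseline modulus from Proposition~\ref{prop}(b), and the handling of the contribution from $s$ far from $t$, where the window bound on $|\sigma(X^1(s,0))-\sigma(X^2(s,0))|$ is not available but where the kernel difference $K(s)$ is itself small enough that the a priori moment bound~(\ref{eq:uniformmoments2a}) suffices to absorb it.
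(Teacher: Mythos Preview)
Your overall architecture---reduce to a Bernstein/Dubins--Schwarz tail bound for the martingale increment, then chain over a dyadic lattice---matches the paper. But there is a real gap in how you handle the contribution from $s$ far from $t$, and a related misidentification of where the factor $R$ comes from.

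You write that for $s$ outside the window $[t-\epsilon,t']$ ``the kernel difference $K(s)$ is itself small enough that the a priori moment bound~(\ref{eq:uniformmoments2a}) suffices to absorb it.'' This does not work. With only the trivial bound $|\wX(s,0)|\le K$ (or $L^p$ moments) on the far field, the integral $\int_0^{t-\epsilon}(p^\theta_{t-s}(x)-p^\theta_{t-s}(y))^2\,ds$ contributes at order $\epsilon^{1-2\alpha}$ with no extra $\epsilon^{2\xi\gamma}$ gain, and a direct check of exponents shows this is \emph{not} dominated by $\epsilon^{2p}|x-y|^{1-2\alpha-2\delta'}$. The bootstrap stalls. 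What the paper actually uses (Lemma~\ref{tubnd}) is the pointwise bound
\[
|\wX(s,0)|\le (4+K)\,2^{\xi N_\xi}\,(t-s)^\xi,\qquad s\in[0,t-\epsilon],
\]
obtained by applying the hypothesis~(\ref{reghyp}) at the \emph{coarser} scales $\tilde N\in[N_\xi,N]$ with $2^{-\tilde N}\approx t-s$ (and the cutoff $T_K$ when $t-s>2^{-N_\xi}$). The weight $(t-s)^{2\gamma\xi}$ this inserts into the far-field quadratic variation is exactly what makes Lemmas~\ref{lem:7.2}--\ref{lem:7.3} close.

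This is also the true source of $R$: the far-field bound carries the prefactor $2^{2\gamma\xi N_\xi}$, and to force $C(K)\,2^{2\gamma\xi N_\xi}\epsilon^{2\hat p}\le \epsilon^{2p}$ one needs $N\ge c(\xi,\xi^1,K)\,N_\xi$, hence $\IP(N_{\xi^1}\ge N)$ picks up $\IP(N_\xi\ge N/R)$. It is not a matter of telescoping~(\ref{reghyp}) at a \emph{finer} scale to reach points in the window---within the window, a single application of~(\ref{reghyp}) at scale $N$ already gives $|\wX(s,0)|\le 3\epsilon^\xi$ directly.
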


\begin{corollary}
\label{cor:ubnd}
Assume the hypthoses of Theorem~\ref{thm:uniquekunbounded}.
  Let $\tilde X$ be as in
Theorem~\ref{thm:H1}, and $\frac12-\alpha<\xi<
\frac{\frac12-\alpha}{1-\gamma} \wedge 1$. There is an a.s. finite positive
random variable
$C_{\xi,K}(\omega)$ such that for any $\epsilon\in(0,1]$, $t\in[0,T_K]$ and $|x|\le
\ep^{\alpha}$, if $|\tilde X(t,\hat{x})|\le \epsilon^\xi$ for some $|\hat{x}-x|\le \epsilon^{\alpha}$, then
$|\tilde X(t,y)| \le C_{\xi,K}\epsilon^\xi$ whenever $|x-y|\le \epsilon^{\alpha}$.  Moreover
there are strictly positive constants
$\delta,c_{\ref{Cprobbnd}.1},c_{\ref{Cprobbnd}.2}$, depending on $\xi$, and an
$r_0(K),$ which also depends on $K,$ such that
\begin{eqnarray}
\label{Cprobbnd}
\IP(C_{\xi,K}\ge r)\le c_{\ref{Cprobbnd}.1}\Bigl[ \Bigl(\frac{r-6}{K+1}
\Bigr)^{-\delta} + K
\exp\Bigl(-c_{\ref{Cprobbnd}.2}\Bigl({r-6\over K+1}\Bigr)^\delta\Bigr)\Bigr]
\end{eqnarray}
for all $r\geq r_0(K)>6+(K+1).$
\end{corollary}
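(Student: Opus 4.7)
The plan is to iterate Theorem~\ref{thm:H1} finitely many times, starting from the base H\"older regularity of $\wX$ provided by Proposition~\ref{prop}(b), and then translate the resulting dyadic estimate into the corollary's continuous-scale statement. For the base case, observe that $\wX=X^1-X^2$ coincides with $Z^1-Z^2$ in the notation of Proposition~\ref{prop}(b), since the shared deterministic parts cancel; hence $\wX$ is a.s.\ jointly H\"older continuous on compacts of $[0,\infty)\times[-1,1]$ with every exponent $\eta<1/2-\alpha$ in both time and space. Fixing such an $\eta$ and $\xi_0<\alpha\eta$, for $|t-t'|\le 2^{-N}$ and $|y-x|\le 2^{-N\alpha}$ we have
\begin{equation*}
|\wX(t,x)-\wX(t',y)|\le 2C(\omega)\,2^{-N\alpha\eta}\le 2^{-N\xi_0}
\end{equation*}
as soon as $N\ge N_{\xi_0}(K,\omega)$; a Garsia--Rodemich--Rumsey estimate applied to~\eqref{eq:Kolmogorov} with $p$ large yields a geometric-in-$N$ tail for $N_{\xi_0}$ with pre-factor of order $K+1$. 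This verifies the hypothesis~\eqref{reghyp} of Theorem~\ref{thm:H1} with $\xi=\xi_0$.

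Given $\xi$ as in the corollary, pick a strictly increasing finite sequence $\xi_0<\xi_1<\cdots<\xi_n$ with $\xi_{k+1}\in(\xi_k,\,\xi_k\gamma+(1/2-\alpha))$ and $\xi_n\ge\xi$; this is possible because $y\mapsto y\gamma+(1/2-\alpha)$ is an increasing contraction with attractive fixed point $(1/2-\alpha)/(1-\gamma)$, which lies strictly above $\xi$ by the hypothesis on $\xi$. Applying Theorem~\ref{thm:H1} $n$ times produces an a.s.\ finite $N_{\xi_n}(K,\omega)$ satisfying~\eqref{reghyp2} with exponent $\xi_n$, and composing~\eqref{Nbnd} $n$ times with the base tail of $N_{\xi_0}$ gives a bound of the form
\begin{equation*}
\IP(N_{\xi_n}\ge N)\le c_1(K+1)\,a^N+c_1 K\exp(-c_2\,2^{N\delta})\quad\text{for }N\ge N(K),
\end{equation*}
with $a\in(0,1)$ and $\delta>0$ depending only on $\xi$.

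To pass to the continuous scale, given $\epsilon\in(0,1]$ set $N'=\lfloor\xi\log_2(1/\epsilon)/\xi_n\rfloor$, so that $2^{-N'\xi_n}\asymp\epsilon^\xi$ and $2^{-N'\alpha}\ge\epsilon^\alpha$ (the latter because $\xi\le\xi_n$). When $N'\ge N_{\xi_n}$, the corollary's hypothesis places $(t,x)\in Z_{K,N',\xi_n}$, and two applications of~\eqref{reghyp2} with the triangle inequality yield $|\wX(t,y)|\le 6\epsilon^\xi$ for every $|y-x|\le\epsilon^\alpha$. In the complementary regime $N'<N_{\xi_n}$ we use a crude pointwise bound $|\wX(t,y)|\le M_K(\omega)$ valid on the relevant compact set for $t\le T_K$, where $M_K$ can be taken of order $K+1$ via the definition of $T_K$ and continuity of $X^i$, obtaining $|\wX(t,y)|\le M_K\cdot 2^{\xi_n N_{\xi_n}}\epsilon^\xi$. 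Setting
\begin{equation*}
C_{\xi,K}(\omega):=6+M_K\cdot 2^{\xi_n N_{\xi_n}}
\end{equation*}
therefore works, and substituting $N_{\xi_n}=\xi_n^{-1}\log_2\!\bigl((r-6)/M_K\bigr)$ into the iterated tail bound for $N_{\xi_n}$ produces~\eqref{Cprobbnd}.

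The main technical delicacy is the tail bookkeeping through the finite iteration: each application of~\eqref{Nbnd} rescales $N$ by $1/R$ and introduces a factor of $K$, so after $n$ compositions one must confirm that the geometric base tail of $N_{\xi_0}$ survives to give a polynomial-in-$r/(K+1)$ bound of the form~\eqref{Cprobbnd}, with the iteration-dependent constants absorbed into $\delta$ and $c_{\ref{Cprobbnd}.j}$.
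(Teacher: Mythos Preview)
Your proposal is correct and follows essentially the same route as the paper: verify the base regularity \eqref{reghyp} at a small $\xi_0$ via Proposition~\ref{prop}(b), iterate Theorem~\ref{thm:H1} finitely many times up to an exponent $\xi_n\ge\xi$, then convert the dyadic estimate to continuous scales and read off the tail of $C_{\xi,K}$ from that of $N_{\xi_n}$.

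A few minor points where your write-up differs from the paper. The paper fixes the explicit sequence $\xi_{n+1}=\bigl[(\xi_n\gamma+\tfrac12-\alpha)\wedge1\bigr](1-\tfrac{1}{n+3})$ and, at the continuous-scale step, takes $N$ with $2^{-N-1}<\epsilon\le 2^{-N}$ and places $(t,x)$ in $Z_{K,N,\xi_{n_0-1}}$; your choice $N'=\lfloor\xi\log_2(1/\epsilon)/\xi_n\rfloor$ lands $(t,x)$ directly in $Z_{K,N',\xi_n}$, which is arguably cleaner. Also, the base-case tail for $N_{\xi_0}$ in the paper comes from the $K$-independent moment bound \eqref{eq:Kolmogorov} via Kolmogorov, so it carries no $(K+1)$ prefactor; your extra $(K+1)$ is harmless for the final estimate but unnecessary. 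Finally, note that Theorem~\ref{thm:H1} requires $\xi^1>\tfrac12-\alpha$ and $\xi^1<[\xi\gamma+\tfrac12-\alpha]\wedge 1$, so in your sequence one should record that $\xi_1>\tfrac12-\alpha$ (automatic since $\xi_0>0$) and keep the $\wedge 1$ cap; neither affects the argument here since the target $\xi<1$.
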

\begin{proof}
Let $\xi_0=\frac{\alpha}{2}(\frac12-\alpha)$. By  Proposition~\ref{prop}(b) and the equality $\tilde X=Z^1-Z^2$, where
$Z^i(t,x)=X^i(t,x)-S_t X_0(x)-\int_0^t \int_{\Resm}p^{\theta}_{t-s}(x,y) g(s,y)\,dy\,ds$, we have (\ref{reghyp})  with
$\xi=\xi_0$.  Indeed, $\tilde{X}$ is
uniformly H\"older continuous on compacts in $[0,\infty)\times\IR$
in space and in time  with any exponent less than $\frac12-\alpha$. This allows to get \eqref{reghyp} with $\xi=\xi_0$.

Inductively define $\xi_{n+1}=\Bigl[\Bigl( \xi_{n} \gamma + \frac12
-\alpha\Bigr)
\wedge 1\Bigr]\Bigl(1-{1\over n+3}\Bigr)$. It is easily checked that $\xi_0<\xi_1$, from which it follows inductively 
that
$\xi_n\uparrow \frac{\frac12-\alpha}{1-\gamma}\wedge 1$. Let now $\xi$ be as in the statement of the corollary, that is 
$\xi\in (\frac12-\alpha,
\frac{\frac12-\alpha}{1-\gamma} \wedge 1)$. 
Fix $n_0$ so that $\xi_{n_0}\ge \xi>\xi_{n_0-1}$.  Apply
Theorem~\ref{thm:H1} inductively $n_0$ times
to get (\ref{reghyp}) for
$\xi=\xi_{n_0-1}$ and, hence, 
(\ref{reghyp2}) with $\xi^1=\xi_{n_0}$.  

First consider $\epsilon\le 2^{-N_{\xi_{n_0}}}$.  Choose $N\in\IN$ so that
$2^{-N-1}<\epsilon\le 2^{-N}$, and so  $N\ge N_{\xi_{n_0}}$. Assume $t\le T_K$, $|x|\le
\ep^{\alpha}\le2^{-N\alpha}$, and
$|\tilde X(t,\hat{x})|\le \epsilon^\xi\le 2^{-N\xi}\le 2^{-N\xi_{n_0-1}}$ for some
$|\hat{x}-x|\le \epsilon^{\alpha}\le 2^{-N\alpha}$. Then $(t,x)\in Z_{K,N,\xi_{n_0-1}}$.  Therefore
(\ref{reghyp2}) with $\xi^1=\xi_{n_0}$ implies that if $|y-x|\le \epsilon^{\alpha}\le
2^{-N\alpha}$, then
\begin{eqnarray*}
|\tilde X(t,y)|&\le&|\tilde X(t,\hat{x})|+|\tilde X(t,\hat{x})-\tilde X(t,x)|
+|\tilde X(t,x)-\tilde X(t,y)|\\
&\le &2^{-N\xi}+2\cdot 2^{-N\xi_{n_0}}
\le 3\cdot 2^{-N\xi}\le 3(2\epsilon)^\xi\le 6\epsilon^\xi.
\end{eqnarray*}
For $\epsilon>2^{-N_{\xi_{n_0}}}$, we have for $(t,x)$
and $(t,y)$ as in the corollary,
\[|\tilde X(t,y)|\le K+1
\le (K+1) 2^{N_{\xi_{n_0}}\xi}\epsilon^\xi. \]
This gives the conclusion with $C_{\xi,K}=(K+1)  2^{N_{\xi_{n_0}}\xi}+6$.
A short calculation and (\ref{Nbnd}) now imply that there are strictly positive
constants
$\tilde{R},\tilde{\delta},{c}_{\ref{Cprobbnd1}.1},{c}_{\ref{Cprobbnd1}.2}$,
depending on $\xi$ and $K$, such that 
\begin{eqnarray}
\label{Cprobbnd1}
\IP(C_{\xi,K}\ge r)&\le&
{c}_{\ref{Cprobbnd1}.1}\Bigl[\IP\Bigl(N_{\frac{1}{2}(\frac12-\alpha) }\ge
\frac{1}{\tilde{R}} \log_2 \Bigl({r-6\over K+1}\Bigr)
\Bigr)\\
&&\phantom{\tilde{c}_{\ref{Cprobbnd1}.1}\Bigl[\IP\Bigl(N_{\frac{\alpha}{2}(1-\frac{\alpha}{2})
}\ge \frac{1}{R} \log_2}
\nonumber
+ K\exp\Bigl(-{c}_{\ref{Cprobbnd1}.2}\Bigl({r-6\over K+1}\Bigr)^{\tilde{\delta}}\Bigr)\Bigr]
\end{eqnarray}
for all $r\geq r_0(K).
$
The usual Kolmogorov continuity proof applied to 
(\ref{eq:Kolmogorov}) with
$\tilde{X}=Z^1-Z^2$ in place of $Z$ (and $\xi=\frac{\alpha}{2}(\frac12-\alpha)$)
shows there are $\tilde{\epsilon},\tilde{c}_{3} >0$
such that
$$\IP(N_{\frac{\alpha}{2}(\frac12-\alpha)}\geq M)
\leq \tilde{c}_{3} 2^{-M \tilde{\epsilon}}$$
for all $M \in \IR.$
Thus, (\ref{Cprobbnd}) follows from (\ref{Cprobbnd1}).
\end{proof}

\medskip

Now let us prove a simple lemma that will allow us to choose the ``right" 
$\xi$ that will satisfy the conditions of the previous corollary and allow us to push through the 
uniqueness argument. One inequality below is needed to make Corollary~\ref{cor:ubnd} apply. 
The other is required for the proof of Lemma~\ref{I3lemma}. 
\begin{lemma}
\label{lemmaonrightxi}
Fix $\alpha, \gamma$ satisfying the conditions of
Theorem~\ref{thm:uniquekunbounded}, that is, 
\begin{eqnarray}
\label{equt:25}
1>\gamma>\frac{1}{2(1-\alpha)}>\frac12.
\end{eqnarray}
Then we  we can choose $\xi\in (0,1)$ such that 
\begin{eqnarray}\label{alphbnd}
\frac{\alpha}{2\gamma-1}<\xi<\left(\frac{\frac12-\alpha}{1-\gamma}\wedge 1\right).
\end{eqnarray}
\end{lemma}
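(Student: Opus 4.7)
The plan is very simple: the assertion is purely an elementary inequality check, so I will just verify that the open interval
\[
\left(\frac{\alpha}{2\gamma-1},\ \frac{\tfrac12-\alpha}{1-\gamma}\wedge 1\right)
\]
is non-empty under the standing assumption \eqref{equt:25}. Note first that under \eqref{equt:25} all the relevant denominators and the numerator $\tfrac12-\alpha$ are strictly positive, so in particular the lower bound $\frac{\alpha}{2\gamma-1}$ is a well-defined positive number.

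The key observation is that the hypothesis $\gamma>\frac{1}{2(1-\alpha)}$ is just a rearrangement of the inequality
\begin{equation}
\label{eq:key_rearr}
\alpha\gamma<\gamma-\tfrac12,\qquad\text{equivalently}\qquad \alpha<1-\frac{1}{2\gamma}.
\end{equation}
I would first rewrite the hypothesis in this form and then check the two strict inequalities
\[
\frac{\alpha}{2\gamma-1}<\frac{\tfrac12-\alpha}{1-\gamma}\qquad\text{and}\qquad \frac{\alpha}{2\gamma-1}<1,
\]
each of which follows from \eqref{eq:key_rearr}.

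For the first inequality, since all quantities involved are positive I cross-multiply: the inequality is equivalent to $\alpha(1-\gamma)<(\tfrac12-\alpha)(2\gamma-1)$, and expanding both sides collapses exactly to $\alpha\gamma<\gamma-\tfrac12$, which is precisely \eqref{eq:key_rearr}. For the second inequality I need $\alpha<2\gamma-1$; since \eqref{eq:key_rearr} gives $\alpha<1-\tfrac{1}{2\gamma}$, it suffices to observe that
\[
(2\gamma-1)-\Bigl(1-\tfrac{1}{2\gamma}\Bigr)=\frac{(2\gamma-1)^2}{2\gamma}>0
\]
because $\gamma>\tfrac12$, so $1-\tfrac{1}{2\gamma}<2\gamma-1$ and hence $\alpha<2\gamma-1$, i.e., $\frac{\alpha}{2\gamma-1}<1$.

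Combining these two inequalities, the lower endpoint $\frac{\alpha}{2\gamma-1}$ is strictly less than $\frac{\tfrac12-\alpha}{1-\gamma}\wedge 1$, so any $\xi$ strictly between them works. There is essentially no obstacle here; the lemma is really just a bookkeeping statement that \eqref{equt:25} is exactly the right condition to make the two inequalities required later (for Corollary~\ref{cor:ubnd} and for Lemma~\ref{I3lemma}) compatible.
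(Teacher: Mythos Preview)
Your proof is correct and follows essentially the same elementary algebra as the paper's argument. The only organizational difference is that the paper splits into two cases according to whether $\frac{\frac12-\alpha}{1-\gamma}<1$ or $\geq 1$, handling each branch of the minimum separately (in the second case obtaining the sharper bound $\frac{\alpha}{2\gamma-1}\le\frac12$), whereas you prove both inequalities $\frac{\alpha}{2\gamma-1}<\frac{\frac12-\alpha}{1-\gamma}$ and $\frac{\alpha}{2\gamma-1}<1$ unconditionally and then take the minimum. Your route is slightly more streamlined, and the extra step verifying $1-\frac{1}{2\gamma}<2\gamma-1$ via the identity $(2\gamma-1)-\bigl(1-\tfrac{1}{2\gamma}\bigr)=\frac{(2\gamma-1)^2}{2\gamma}$ is a clean way to avoid the case split.
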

\begin{proof}
Let us verify that~(\ref{alphbnd}) is possible. There are two cases, the first being
\begin{eqnarray}
\label{equt:27}
\frac{\frac12-\alpha}{1-\gamma}<1.
\end{eqnarray}
Recall that $\alpha\in (0,\frac12)$ and $\gamma\in(\frac12,1)$. Therefore
\begin{eqnarray}
\nonumber
\frac{\frac12-\alpha}{1-\gamma}-\frac{\alpha}{2\gamma-1}&=&
\frac{(\frac12-\alpha)(2\gamma-1)-\alpha(1-\gamma)}{(1-\gamma)(2\gamma-1)}\\
\nonumber
&=&\frac{\gamma(1-\alpha) -\frac12}{(1-\gamma)(2\gamma-1)}
\\
\label{equt:26}
&>&0,
\end{eqnarray}
where the last inequality follows by~(\ref{equt:25}). Then (\ref{equt:26})
implies that we can fix $\xi$ satisfying~(\ref{alphbnd}) in the case of~(\ref{equt:27}).  

The second case is for
$$
\label{equt:28}
\frac{\frac12-\alpha}{1-\gamma}\geq 1, 
$$
that is 
$$
\label{equt:29}
\alpha \leq \gamma-\frac12.
$$
Hence 
$$
\label{equt:30}
\frac{\alpha}{2\gamma-1}\leq \frac12, 
$$
and we can easily fix  $\xi$ satisfying~(\ref{alphbnd}) in this case as well.

\end{proof}

Now fix $\xi$ as in the previous lemma and define 
\begin{eqnarray*}
\eta\equiv\frac{\xi}{\alpha}.
\end{eqnarray*}
Lemma~\ref{lemmaonrightxi} immediately implies that
\begin{eqnarray}
\label{equt:31}
\eta>\frac{1}{2\gamma-1}.
\end{eqnarray}

We return to the setting and notation of Section 3. In particular $\Psi\in
C_c^\infty([0,t]\times\IR)$ with $\Gamma(t)=\{x:\exists s\le t,\,\Psi_s(x)>0
\}\subset B(0,J(t))$.  

For $a_n$ given by~\eqref{acond}, let $m^{(n)}:=a_{n-1}^{-\frac{1}{\eta}}$. Note that $m^{(n)}\ge
1$ for all $n$.
Set $c_{0}(K):= r_0(K) \vee K^{2}$ (where $r_0(K)$ is chosen as in Corollary \ref{cor:ubnd})
and define the stopping time
\begin{eqnarray*}
& &T_{\xi,K}= \inf\{ t \geq 0: t>T_{K} \text { or }t \leq T_{K} \text{ and there exist }
\epsilon \in (0,1], \hat{x},x,y \in \IR \text{ with } \\
& & \phantom{AAAA}|x|\le \ep^{\alpha}, |\tilde{X}(t,\hat{x})| \leq \epsilon^{\xi},
|x-\hat{x}| \leq \epsilon^{\alpha}, |x-y|\leq \epsilon^{\alpha} \text{ such that } |\tilde{X}(t,y)|> c_{0}(K)
\epsilon^{\xi} \}.
\end{eqnarray*}
Assuming our filtration is completed as usual, $T_{\xi,K}$ is a stopping time 
by the standard projection argument.
Note that for any $t\geq 0,$ by Corollary~\ref{cor:ubnd},
\begin{eqnarray}
\nonumber
\IP( T_{\xi,K} \leq t) &\leq& \IP( T_K \leq t) + \IP(C_{\xi,K} >c_0(K)) \\
\label{eq:stoptbnd}
&\leq& \IP( T_K \leq t)
+ c_{\ref{Cprobbnd}.1}\Bigl[ \Bigl(\frac{K^{2}-6}{K+1
}\Bigr)^{-\delta}\\
\nonumber
& &\phantom{\IP( T_{\xi,K} \leq t) \leq  }
+K \exp\Bigl(-c_{\ref{Cprobbnd}.2}\Bigl({K^{2}-6\over K+1}\Bigr)^\delta\Bigr)\Bigr]
\end{eqnarray}
which tends to zero as $K \rightarrow \infty$ 
due to~(\ref{equt:120}). 

With this set-up we can show the following lemma:
\begin{lemma}
\label{lem:ubnd}
For all $x \in B(0, \frac{1}{m^{(n)}})$ and $s\in[0,T_{\xi,K}]$, if
$ |\la \tilde{X}_{s}, \Phi_{x}^{m^{(n)}} \ra| \leq a_{n-1}$ then
\begin{equation*}
\sup_{y \in B(x, \frac{1}{m^{(n)}})} |\tilde{X}(s,y)| \leq c_{0}(K) a_{n-1}.
\end{equation*}
\end{lemma}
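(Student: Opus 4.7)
My plan is to reduce the lemma, via the very definition of $T_{\xi,K}$, to exhibiting a single nearby ``witness'' point where $|\tilde{X}(s,\cdot)|\le \epsilon^{\xi}$, and then to produce that witness by a mean--value argument against the averaging kernel $\Phi^{m^{(n)}}_{x}$.

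First I reparametrize. Set $\epsilon := a_{n-1}^{1/\xi}$. Since $\eta=\xi/\alpha$, we have $\epsilon^{\xi}=a_{n-1}$ and $\epsilon^{\alpha}=a_{n-1}^{1/\eta}=1/m^{(n)}$; moreover $(m^{(n)})^{-1/\alpha}=\epsilon$, so $\Phi^{m^{(n)}}_{x}(y)=p^{\theta}_{\epsilon}(x,y)$ is a probability density of spatial scale $\epsilon^{\alpha}$ centered essentially at $x$. In these units, the hypothesis reads $|\langle \tilde{X}_{s},p^{\theta}_{\epsilon}(x,\cdot)\rangle|\le \epsilon^{\xi}$ with $|x|\le\epsilon^{\alpha}$, and the desired conclusion is $\sup_{|y-x|\le\epsilon^{\alpha}}|\tilde{X}(s,y)|\le c_{0}(K)\epsilon^{\xi}$.

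The conclusion will follow directly from the definition of $T_{\xi,K}$, applied at the scale $\epsilon$, as soon as I can exhibit a point $\hat{x}$ with $|\hat{x}-x|\le\epsilon^{\alpha}$ and $|\tilde{X}(s,\hat{x})|\le\epsilon^{\xi}$; indeed, any triple $(\hat{x},x,y)$ with $|y-x|\le\epsilon^{\alpha}$ violating the conclusion would be a witness that $s\ge T_{\xi,K}$. To produce such $\hat{x}$ I argue by contradiction: suppose $|\tilde{X}(s,y)|>\epsilon^{\xi}$ for every $y\in\overline{B(x,\epsilon^{\alpha})}$. Continuity of $\tilde{X}(s,\cdot)$ and connectedness of the ball force constant sign there, and we may assume $\tilde{X}(s,y)>\epsilon^{\xi}$ throughout. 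The explicit form \eqref{equt:32} of $p^{\theta}_{t}$, combined with the symmetry and the $\epsilon^{\alpha}$-spatial-scale of the semigroup $S_{\epsilon}$ (which is that of the Bessel process $|\xi_{t}|^{1+\theta/2}\operatorname{sign}(\xi_{t})$), yields a universal constant $c_{1}>0$ with
\[
\int_{B(x,\epsilon^{\alpha})} p^{\theta}_{\epsilon}(x,y)\,dy \ge c_{1}\qquad\text{uniformly in }\epsilon\in(0,1],\ |x|\le \epsilon^{\alpha}.
\]
Consequently the in-ball portion of the weighted mean is bounded below by $c_{1}\epsilon^{\xi}$.

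The main obstacle, and the technical heart of the proof, is to bound the tail
$\int_{B(x,\epsilon^{\alpha})^{c}}\tilde{X}(s,y)\,p^{\theta}_{\epsilon}(x,y)\,dy$
from below by $-o(\epsilon^{\xi})$ (as $n\to\infty$, equivalently $\epsilon\downarrow 0$), since otherwise a very negative tail could cancel the positive interior contribution. On $[-1,1]\setminus B(x,\epsilon^{\alpha})$ the estimate $|\tilde{X}(s,y)|\le 2K$ (from $s\le T_{K}$) combined with the super-Gaussian decay $\exp(-|y-x|^{2+\theta}/(2\epsilon))$ of $p^{\theta}_{\epsilon}$ beyond its characteristic scale makes the annular contribution smaller than any power of $\epsilon$; on $\{|y|>1\}$ the $\ctem$ bound on $X^{1},X^{2}$ is crushed by the same super-Gaussian tail. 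Combining these with the in-ball lower bound gives $|\langle\tilde{X}_{s},p^{\theta}_{\epsilon}(x,\cdot)\rangle|> \epsilon^{\xi}$ for $n$ large, contradicting the hypothesis and producing the required witness. (The harmless universal factor $c_{1}^{-1}$ is absorbed by applying $T_{\xi,K}$ at the slightly enlarged scale $\tilde\epsilon=c_{1}^{-1/\xi}\epsilon$, which enlarges the conclusion only by a universal constant and is amply covered by $c_{0}(K)\ge K^{2}$.)
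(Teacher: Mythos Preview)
Your overall plan coincides with the paper's: exhibit a witness $\hat{x}$ close to $x$ with $|\tilde X(s,\hat{x})|\le a_{n-1}=\epsilon^{\xi}$ and then read off the conclusion from the definition of $T_{\xi,K}$ at the scale $\epsilon$ determined by $\epsilon^{\alpha}=1/m^{(n)}$. The paper dispatches the witness step in a single sentence, simply citing the continuity of $\tilde X_s$ against the probability kernel $\Phi_x^{m^{(n)}}$ to produce $\hat x\in B(x,1/m^{(n)})$.

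Your attempt to justify that step via a contradiction/tail argument, however, has a genuine gap. The kernel $p_\epsilon^\theta(x,\cdot)$ lives at spatial scale exactly $\epsilon^{\alpha}$, so by scaling the mass it places \emph{outside} $B(x,\epsilon^{\alpha})$ is a universal constant in $(0,1)$; it does not tend to zero as $\epsilon\downarrow 0$. Concretely, at $x=0$ one has
\[
\int_{|y|>\epsilon^{\alpha}} p_\epsilon^\theta(y)\,dy
= c_\theta\int_{|y|>\epsilon^{\alpha}} \epsilon^{-\alpha}e^{-|y|^{1/\alpha}/(2\epsilon)}\,dy
= c_\theta\int_{|u|>1} e^{-|u|^{1/\alpha}/2}\,du,
\]
a fixed positive number. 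Hence the annular contribution to $\langle\tilde X_s,\Phi_x^{m^{(n)}}\rangle$ is bounded only by a constant times $K$ (the a~priori bound on $|\tilde X_s|$ over $[-1,1]$ before $T_K$), and this dominates $\epsilon^{\xi}$ for small $\epsilon$. Your claim that the annulus contributes something ``smaller than any power of $\epsilon$'' confuses the pointwise decay of $p_\epsilon^\theta(x,y)$ for \emph{fixed} $y$ (where indeed the exponent $-|y|^{1/\alpha}/(2\epsilon)\to-\infty$) with the behaviour near the inner boundary $|y-x|\approx\epsilon^{\alpha}$, where that exponent is merely $O(1)$. (Separately, the kernel $p_\epsilon^\theta(x,y)$ is not translation invariant, so the asserted decay in $|y-x|$ is not the right form either.) Consequently the intended inequality $|\langle\tilde X_s,\Phi_x^{m^{(n)}}\rangle|>\epsilon^{\xi}$ does not follow, the contradiction fails, and the rescaling patch in your final paragraph cannot rescue it: even with that adjustment you would still need the tail to be $o(\epsilon^{\xi})$, which it is not.
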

\begin{proof}
Since $ |\la \tilde{X}_{s}, \Phi_{x}^{m^{(n)}} \ra| \leq a_{n-1}$ and $\tilde
X_s(\cdot)$ is continuous there exists an $\hat{x} \in B(x, \frac{1}{m^{(n)}})$ such
that
$|\tilde{X}(s,\hat{x})| \leq a_{n-1}.$ 
Apply the definition of the stopping time
with $\epsilon^\alpha=1/m^{(n)}\in (0,1]$ and so $\epsilon^\xi=a_{n-1}$ to obtain the required
bound.
\end{proof}

\smallskip

Next, we bound
$|I_{3}^{m^{(n)},n}|$ of~\eqref{eq:Iparts} using the H\"older continuity of
$\sigma,$ as well  as the definition of $\psi_{n}$.  

\begin{lemma}
\label{I3lemma}
\begin{equation}
\label{eq:limI_3}
\lim_{n \rightarrow \infty} \IE \Big( |I_{3}^{m^{(n)},n}(t\wedge T_{\xi,K})| \Big)=0
\end{equation}
\end{lemma}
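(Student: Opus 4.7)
The plan is to reduce the integrand using the Hölder bound $(\sigma(X^1(s,0))-\sigma(X^2(s,0)))^2\le L^2|\tilde X(s,0)|^{2\gamma}$ from \eqref{eq:sigma_Hold}, and to exploit the crucial fact that $|\tilde X(s,0)|$ does not depend on the spatial integration variable $x$, so it may be pulled outside the $x$-integral to give
\[
|I_3^{m^{(n)},n}(s)|\le \tfrac{L^2\|\Psi\|_\infty}{2}\,|\tilde X(s,0)|^{2\gamma}\int_\IR \psi_n(|\la\tilde X_s,\Phi^{m^{(n)}}_x\ra|)\,\Phi^{m^{(n)}}_x(0)^2\,dx.
\]
I would then split the $x$-integral at an intermediate scale $C(n)/m^{(n)}$, where $C(n)$ grows polynomially in $n$ (roughly $C(n)\sim n^{2/(2+\theta)}$ with a large enough constant).

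For the outer region $|x|>C(n)/m^{(n)}$, I would use the crude bound $|\tilde X(s,0)|\le 2K$ available for $s\le T_K$, the bound $\|\psi_n\|_\infty\le 2/(na_n)$ from \eqref{psicond}, and the Gaussian-type exponential decay of $\Phi^{m^{(n)}}_x(0)=p^\theta_{m^{-(2+\theta)}}(x)$ from \eqref{equt:32}. A change of variables $u=m^{(n)}x$ gives $\int_{|x|>C(n)/m^{(n)}}\Phi^{m^{(n)}}_x(0)^2\,dx\le Cm^{(n)}e^{-C(n)^{2+\theta}}/C(n)^{1+\theta}$, and by choosing $C(n)$ large enough relative to $\log(m^{(n)}/a_n)$ and $\log n$, the outer contribution vanishes as $n\to\infty$.

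For the inner region $|x|\le C(n)/m^{(n)}$ the key step is a mild extension of Lemma~\ref{lem:ubnd}: I would apply the definition of $T_{\xi,K}$ with $\epsilon=(C(n)/m^{(n)})^{1/\alpha}$, so that $\epsilon^\alpha=C(n)/m^{(n)}$ and $\epsilon^\xi=C(n)^{\eta}a_{n-1}$, and reuse the $\hat x\in B(x,1/m^{(n)})\subset B(x,\epsilon^\alpha)$ with $|\tilde X(s,\hat x)|\le a_{n-1}\le \epsilon^\xi$ produced in the proof of Lemma~\ref{lem:ubnd}. Taking $y=0$ in the stopping-time condition (which is permitted because $|x|\le\epsilon^\alpha$) one concludes that whenever the integrand is nonzero with $s\le T_{\xi,K}$, $|\tilde X(s,0)|\le c_0(K)\,C(n)^{\eta}\,a_{n-1}$. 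Combining this with $\|\psi_n\|_\infty\le 2/(na_n)$, the estimate $\int\Phi^{m^{(n)}}_x(0)^2\,dx\le Cm^{(n)}$, and the identity $m^{(n)}=a_{n-1}^{-1/\eta}$, the inner contribution is controlled by $C(K)\,C(n)^{2\gamma\eta}\,a_{n-1}^{2\gamma-1/\eta}/(na_n)$.

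The main obstacle, and the point where the parameter restrictions come in decisively, is verifying that this inner bound goes to zero. Since $\rho(x)\ge\sqrt x$, \eqref{acond} forces $a_n\le a_{n-1}e^{-n}$, hence $a_n\le e^{-n(n+1)/2}$ for large $n$. Combined with \eqref{equt:31}, which gives $2\gamma-1/\eta>1$, a direct computation shows that the exponent in $a_{n-1}^{2\gamma-1/\eta}/a_n$ tends to $-\infty$ like $-cn^2$ for some $c>0$, which easily dominates the polynomial factor $C(n)^{2\gamma\eta}\approx n^{\mathrm{poly}}$. Taking expectation via Fubini then yields the claim, since all the bounds above hold pointwise in $\omega$ on the event $\{s\le T_{\xi,K}\}$.
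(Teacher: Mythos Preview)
Your proof is correct and shares the same backbone as the paper's: apply the H\"older bound on $\sigma$, use $\psi_n\le 2/(na_n)$ on its support, invoke Lemma~\ref{lem:ubnd} (via the stopping time $T_{\xi,K}$) to bound $|\tilde X(s,0)|$ by $c_0(K)$ times a power of $a_{n-1}$, compute $\int\Phi_x^{m^{(n)}}(0)^2\,dx\sim m^{(n)}=a_{n-1}^{-1/\eta}$, and conclude via $2\gamma-1/\eta>1$ from~\eqref{equt:31}. The one genuine difference is your inner/outer split in $x$. The paper applies Lemma~\ref{lem:ubnd} across the full $x$-integral, but that lemma is stated only for $x\in B(0,1/m^{(n)})$; for $x$ outside this ball the indicator condition need not force $|\tilde X(s,0)|\le c_0(K)a_{n-1}$, and the trivial bound $|\tilde X(s,0)|\le 2K$ would leave a term of order $m^{(n)}/(na_n)$ that does not vanish. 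Your split at radius $C(n)/m^{(n)}$ with $C(n)\sim n^{2/(2+\theta)}$ handles this cleanly: the outer tail is killed by the Gaussian-type decay of $\Phi_x^{m^{(n)}}(0)$, while on the inner region you may still invoke $T_{\xi,K}$ with the slightly larger $\epsilon^\alpha=C(n)/m^{(n)}$, at the harmless cost of a polynomial factor $C(n)^{2\gamma\eta}$ that is swallowed by the $e^{-cn^2}$ decay of $a_{n-1}^{2\gamma-1/\eta}/a_n$. In short, your argument is a more careful execution of the same idea.
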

\begin{proof}
By~\eqref{eq:sigma_Hold} we have
\begin{eqnarray*}
|I_{3}^{m^{(n)},n}(t \wedge T_{\xi,K})|
&\leq& \frac{L^2}{n}
\int_{0}^{t\wedge T_{\xi,K}} \int_{\IR}
1_{\{a_{n} \leq |\la \tilde{X}_{s}, \Phi_{x}^{m^{(n)}} \ra| \leq a_{n-1}\} } \,
a_{n}^{-1} |\tilde{X}(s,0)|^{2\gamma}  \\
& &\phantom{AAAAAAAAA}\times
\Phi_{x}^{m^{(n)}}(0)^2 \Psi_s(x) dx
ds.
\end{eqnarray*}
We obtain from Lemma~\ref{lem:ubnd}
\begin{align*}
&|I^{m^{(n)},n}_{3}(t \wedge T_{\xi,K})| \\
&\qquad\leq  L^2c_{0}(K)^{\gamma} \frac{a_{n-1}^{2
\gamma}}{2na_{n}}
\int_{0}^{t\wedge T_{\xi,K}} \int_{\IR}
1_{\{a_{n} \leq |\la \tilde{X}_{s}, \Phi_{x}^{m^{(n)}} \ra| \leq a_{n-1}\} } \,\\
& \qquad\qquad\qquad\qquad\qquad\qquad\qquad\qquad\qquad
\cdot \Phi_{x}^{m^{(n)}}(0)^2 \Psi_s(x) dx ds\\
&\qquad\leq \frac{ L^2 
||\Psi||_{\infty} c_{0}(K)^{2\gamma}}{n} \frac{a_{n-1}^{2 \gamma}}{a_{n}}
\int_{0}^{t\wedge T_{\xi,K}} 
\left( \int_{\Gamma(t)}
\Phi_{x}^{m^{(n)}}(0)^2 dx \right) ds\\
&\qquad\leq \frac{ L^2 
||\Psi||_{\infty} c_{0}(K)^{2\gamma}t}{n} \frac{a_{n-1}^{2 \gamma}}{a_{n}}
m^{(n)}c_\theta\\
&\qquad\leq \frac{C(L, \Psi) c_{0}(K)^{2\gamma} t}{n} 
\frac{a_{n-1}^{2
\gamma}}{a_{n}} a_{n-1}^{-1/\eta}\\
&\qquad= \frac{C(L, \Psi)  c_{0}(K)^{2\gamma}t}{n}
\frac{a_{n-1}^{(2\gamma-\frac{1}{\eta} )} }{a_{n}}.
\end{align*}

\noindent
If we choose $\rho(x)=\sqrt{x}$ then
$\int_{a_{n}}^{a_{n-1}} x^{-1} dx = n$
so that $\frac{a_{n-1}}{a_{n}} = e^{n}$ or (using that $a_{0}=1$)
$a_{n} = e^{-\frac{n(n+1)}{2}}.$ Thus~\eqref{eq:limI_3} holds
if
$n(n+1) - (2\gamma-\frac{1}{\eta} )(n-1)n <0$
for $n$ large. This is equivalent to
\begin{equation*}
1-(2\gamma-\frac{1}{\eta	} ) <0 \Leftrightarrow
\gamma > \frac{1}{2} + \frac{1}{2\eta}
\end{equation*}
which holds by (\ref{equt:31}). A similar argument applies for any $\rho$ satisfying~\eqref{rhobnd} and~\eqref{rhocond}.
\end{proof}

Use (\ref{absconv}) and
Fatou's Lemma on the left-hand side of
(\ref{eq:Iparts}),
and Lemmas~\ref{lem:conv} and~\ref{I3lemma} on the
right-hand side, to take limits in this equation and
so conclude that
\begin{eqnarray*}
\int_{\IR} \IE\Big( |\tilde{X}(t\wedge T_{\xi,K},x)|\Big) \Psi_t(x) dx
&\leq&  \liminf_{n \rightarrow \infty} \int_{\IR} \IE \Big(
\phi_{n}(\la \tilde{X}_{t \wedge T_{\xi,K}}, \Phi^{m^{(n)}}_{x} \ra) \Big)
\Psi_t(x) dx \\
&\leq&  \IE \Big( \int_{0}^{t \wedge T_{\xi,K}} \int_{\IR}
|\tilde{X}(s,x)|
\Bigl(\Delta_{\theta} \Psi_s(x)+\dot\Psi_s(x)\Bigr)dx ds \Big)\\
&\leq& \int_{0}^{t} \int_{\IR} \IE \Big(  |\tilde{X}(s,x)| \Big)
|\Delta_{\theta} \Psi_s(x)+\dot\Psi_s(x)|dx ds.
\end{eqnarray*}
Since $T_{\xi,K}$ tends in probability to infinity as $K \rightarrow \infty$
according to (\ref{eq:stoptbnd}), we have that \hfil\break
$\tilde{X}(t\wedge
T_{\xi,K},x)
\rightarrow \tilde{X}(t,x)$ and so we finally conclude with another application of
Fatou's Lemma that
\begin{equation}
\label{eq:heatGron}
\int_{\IR} \IE\Big( |\tilde{X}(t,x)|\Big) \Psi_t(x) dx
\leq \int_{0}^{t} \int_{\IR} \IE \Big(  |\tilde{X}(s,x)| \Big)
\Bigl|
\Delta_{\theta} \Psi_s(x)+\dot\Psi_s(x)\Bigr|dx ds.
\end{equation}

Let $\{g_N\}$ be a sequence of functions in $C_c^\infty(\IR)$ such that
$g_N:\IR\to[0,1]$,
$$B(0,N)\subset\{x:g_N(x)=1\},\quad B(0,N+1)^c\subset\{x:g_N(x)=0\},$$
and
$$\sup_{N\geq 1}[\Vert |x|^{-\theta} g'_N\Vert_\infty+\Vert \Delta_{\theta}g_N\Vert_\infty]\equiv
C<\infty,$$
where $g'_N$ denotes the derivative with respect to the spatial variable.
Now let $\phi\in C_c^\infty(\IR)$, and for $(s,x)\in[0,t]\times\IR$
set $\Psi_N(s,x)=(S_{t-s}\phi(x)) g_N(x)$.  It is then easy to check that $\Psi_N\in
C_c^\infty([0,t]\times \IR)$ and for $\lambda>0$ there is a
$C=C(\lambda,\phi,t)$ such that for all $N$
\begin{eqnarray*}
|\Delta_{\theta}
\Psi_N(s,x)+\dot\Psi_N(s,x)|&=&
\left| 4\alpha^2|x|^{-\theta}\frac{\partial}{\partial x}
S_{t-s}\phi(x)\frac{\partial}{\partial x}g_N(x)
+S_{t-s}\phi(x)\Delta_{\theta}g_N(x)\right|\\
&\le&C e^{-\lambda|x|}1_{\{|x|>N\}}.
\end{eqnarray*}
Use this in (\ref{eq:heatGron}) to conclude that
$$\int_{\IR}\IE(|\tilde X(t,x)|)\phi(x)\,dx\le
C\int_0^t\int_{\IR}\IE(|\tilde X(s,x)|)e^{-\lambda
|x|}1_{\{|x|>N\}}\,dx\,ds.$$
By Proposition~\ref{prop} 
the right-hand side of the above approaches zero as $N\to\infty$ and we
see that
$$\IE\Bigl(\int_{\IR}|\tilde X(t,x)|dx\Bigr)=0.$$
Therefore $X^1(t)=X^2(t)$ for all $t\ge 0$ a.s. by continuity.
\gdm

\bigskip

\section{H\"older continuity:  Proposition~\ref{prop}}
\label{sec:prop}
First we will introduce a number of technical lemmas that will be frequently used. 
The proof of the next lemma is elementary and therefore is omitted. 
\begin{lemma}
\label{lem:2}
For any $x,y\in \IR$, $0\leq\beta\leq 1$,
\begin{equation*}
 |p_t(x)-p_t(y)|\leq ct^{-\alpha}\left(\frac{|x-y|}{t}\right)^{\beta}\left(\max(|x|,|y|)\right)^{(\frac{1}{\alpha}-1)\beta}.
\end{equation*}
\end{lemma}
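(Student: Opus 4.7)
I propose the following plan. The lemma is about the kernel $p_t(x)=p^\theta_t(x)=c_\theta t^{-\alpha}\exp(-|x|^{2+\theta}/(2t))$ with $\alpha=1/(2+\theta)$, so $1/\alpha-1=1+\theta$, and I need a spatial modulus estimate that interpolates between the trivial sup bound and a mean-value bound.

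\textbf{Step 1: the case $\beta=0$.} Since $p_t(x)=c_\theta t^{-\alpha}e^{-|x|^{2+\theta}/(2t)}\le c_\theta t^{-\alpha}$, the triangle inequality gives $|p_t(x)-p_t(y)|\le 2c_\theta t^{-\alpha}$, which is the desired bound with $\beta=0$.

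\textbf{Step 2: the case $\beta=1$.} A direct differentiation gives
\begin{equation*}
\frac{\partial}{\partial x}p_t(x)=-c_\theta t^{-\alpha}\,\frac{(2+\theta)|x|^{1+\theta}\mathrm{sgn}(x)}{2t}\,e^{-|x|^{2+\theta}/(2t)},
\end{equation*}
so using $e^{-u}\le 1$ on $[0,\infty)$ we get $|p_t'(z)|\le c\,t^{-\alpha}|z|^{1+\theta}/t$. Because $1+\theta\ge 1$, the map $z\mapsto|z|^{1+\theta}$ is convex, so $\sup_{z\in[x\wedge y,x\vee y]}|z|^{1+\theta}=\max(|x|,|y|)^{1+\theta}$. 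The mean value theorem then yields
\begin{equation*}
|p_t(x)-p_t(y)|\le c\,t^{-\alpha}\,\frac{|x-y|}{t}\,\max(|x|,|y|)^{1/\alpha-1},
\end{equation*}
which is the $\beta=1$ case (and incidentally also handles the case $y=-x$ since $0$ lies in the segment but the sup is attained at the endpoints).

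\textbf{Step 3: interpolation for $0<\beta<1$.} Write $|p_t(x)-p_t(y)|=|p_t(x)-p_t(y)|^{1-\beta}|p_t(x)-p_t(y)|^\beta$, then apply the $\beta=0$ bound to the first factor and the $\beta=1$ bound to the second. The $t^{-\alpha}$ factors combine to $t^{-\alpha}$ and we pick up $(|x-y|/t)^\beta\,\max(|x|,|y|)^{(1/\alpha-1)\beta}$ from the second factor, giving exactly the claimed inequality.

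There is no real obstacle: the only point that requires a brief check is that $z\mapsto|z|^{1+\theta}$ attains its maximum on a segment at an endpoint, which is immediate from convexity since $\theta>0$. The whole proof is a two-line computation plus geometric interpolation.
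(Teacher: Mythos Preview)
Your proof is correct. The paper omits the proof entirely, calling it ``elementary,'' and your argument---the sup bound for $\beta=0$, the mean-value estimate using $|p_t'(z)|\le c\,t^{-\alpha}|z|^{1+\theta}/t$ for $\beta=1$, and interpolation via $|p_t(x)-p_t(y)|^{1-\beta}|p_t(x)-p_t(y)|^{\beta}$---is exactly the kind of computation the authors had in mind.
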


\begin{lemma}
\label{lem:23_1}
For any $0<t<t'\leq T$, $x\in\IR$,
\begin{equation*}
\int_0^t (p_{t'-s}(x)-p_{t-s}(x))^2\,ds \leq c(T)|t'-t|^{1 -2\alpha}. 
\end{equation*}
\end{lemma}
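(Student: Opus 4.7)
}
The plan is to substitute $u = t-s$ and set $h := t'-t$, rewriting the integral as $\int_0^t (p_{u+h}(x)-p_u(x))^2\,du$. I will bound this by splitting at $u = h\wedge t$ and using a different $x$-uniform pointwise estimate on $|p_{u+h}(x)-p_u(x)|$ on each piece.

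For $u\in[0,h\wedge t]$, the naive bound $p_r(y)\le p_r(0) = c_\theta r^{-\alpha}$ (which follows from \eqref{equt:32}) gives
$$|p_{u+h}(x)-p_u(x)|\le 2c_\theta u^{-\alpha}$$
uniformly in $x$. Squaring and integrating contributes at most $\int_0^{h} 4c_\theta^2 u^{-2\alpha}\,du = \frac{4c_\theta^2}{1-2\alpha} h^{1-2\alpha}$, which is finite precisely because $\alpha<1/2$.

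For $u\in[h\wedge t, t]$ (empty if $h\ge t$, so nothing to do there) a sharper bound is needed. Differentiating \eqref{equt:32} in the time variable gives
$$\partial_r p_r(x) = c_\theta r^{-\alpha-1}e^{-|x|^{2+\theta}/(2r)}\Bigl(\tfrac{|x|^{2+\theta}}{2r}-\alpha\Bigr),$$
and the elementary inequality $ye^{-y}\le e^{-1}$ for $y\ge 0$, applied with $y=|x|^{2+\theta}/(2r)$, absorbs the polynomial growth in $|x|$ into the exponential factor and yields $|\partial_r p_r(x)|\le C r^{-\alpha-1}$ uniformly in $x$. Integrating in $r$ over $[u,u+h]$ produces $|p_{u+h}(x)-p_u(x)|\le Ch u^{-\alpha-1}$, so
$$\int_{h}^{t}(p_{u+h}(x)-p_u(x))^2\,du \le C^2 h^2\int_h^\infty u^{-2\alpha-2}\,du = \tfrac{C^2}{2\alpha+1}\,h^{1-2\alpha}.$$

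Summing the two pieces gives $\int_0^t (p_{u+h}(x)-p_u(x))^2\,du\le c\, h^{1-2\alpha}$ with a constant independent of both $x$ and $T$ (so the allowed $c(T)$ dependence is not actually needed here). The only mildly delicate step is the $y e^{-y}$ trick producing the $x$-uniform bound on $\partial_r p_r$; beyond that the argument is entirely routine elementary calculus.
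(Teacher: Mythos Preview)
Your proof is correct and follows the same overall decomposition as the paper: after the change of variables $u=t-s$, both you and the paper split at $u=h=t'-t$, use the crude bound $p_r\le c_\theta r^{-\alpha}$ on $[0,h]$, and a derivative-type estimate on $[h,t]$. The execution on the far piece differs slightly: the paper writes $p_{t'-s}(x)-p_{t-s}(x)$ as (prefactor difference)$\times$(exponential) plus (prefactor)$\times$(exponential difference) and bounds the two terms separately, whereas you bound $|\partial_r p_r(x)|\le C r^{-\alpha-1}$ in one stroke via $ye^{-y}\le e^{-1}$ and integrate. Your route is a bit more direct and, as you note, yields a constant independent of $T$; the paper's route is more explicit but slightly longer. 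Either way the argument is routine.
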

\begin{proof}
Assume, without loss of generality, that $t'-t\leq t$. 
\begin{eqnarray}
\label{equt:23_1}
 \int_0^t (p_{t'-s}(x)-p_{t-s}(x))^2\,ds &\leq& 
c \int_{t-|t'-t|}^{t} (t-s)^{-2\alpha}\,ds
+ \int_{0}^{t-|t'-t|} (p_{t'-s}(x)-p_{t-s}(x))^2\,ds
\\
\nonumber
&\leq& c|t'-t|^{1-2\alpha}+ 
c\int_0^{t-|t'-t|}  \left| ((t-s)^{-\alpha}-(t'-s)^{-\alpha})e^{-\frac{|x|^{1/\alpha}}{t-s}}\right|^2\,ds\\
\nonumber
&&\mbox{}+
c \int_{0}^{t-|t'-t|} \left| (t'-s)^{-\alpha}(e^{-\frac{|x|^{1/\alpha}}{t-s}}-e^{-\frac{|x|^{1/\alpha}}{t'-s}})\right|^2\,ds
\end{eqnarray}
The second term on the right hand side is trivially bounded by
\begin{eqnarray*}
 c\int_0^{t-|t'-t|}  ((t-s)^{-\alpha}-(t'-s)^{-\alpha})^2\,ds
&\leq& c\int_0^{t} (t-s)^{-2\alpha}-(t'-s)^{-2\alpha}\,ds\\
&\leq&  c|t'-t|^{1-2\alpha}.
\end{eqnarray*}
The third term on the right hand side of~\eqref{equt:23_1} is also easy to bound, as
\begin{eqnarray*}
c \int_{0}^{t-|t'-t|} (t'-s)^{-2\alpha}e^{-\frac{2|x|^{1/\alpha}}{t'-s}} \frac{|x|^{1/\alpha}(t'-t)}{(t-s)(t'-s)}\,ds
&\leq& c (t'-t) \int_{0}^{t-|t'-t|} (t'-s)^{-2\alpha} (t-s)^{-1}\,ds\\
&\leq&  c|t'-t|^{1-2\alpha}.
\end{eqnarray*}
This completes the proof.
\end{proof}
\begin{lemma}
\label{lem:23_2}
For any $x,y\in [-1,1]$, $t\leq T$, $\beta\in (1/2-\alpha)$,
\begin{equation*}
\int_0^t (p_{t-s}(x)-p_{t-s}(y))^2\,ds \leq c(T)\left(\max(|x|,|y|)\right)^{(\frac{1}{\alpha}-1)2\beta}|x-y|^{1-2\alpha}. 
\end{equation*}
\end{lemma}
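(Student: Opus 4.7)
The plan is to split the time integral at the critical scale $u^\ast := M^{1/\alpha-1}|x-y|$, where $M := \max(|x|,|y|)$, and combine two complementary pointwise bounds on $|p_{u}(x)-p_{u}(y)|$ that coincide at $u=u^\ast$. The first is the trivial bound $|p_u(x)-p_u(y)|\le cu^{-\alpha}$, coming from $p_u\le cu^{-\alpha}$; the second is the gradient bound
\[|p_u(x)-p_u(y)|\le cu^{-\alpha-1}M^{1/\alpha-1}|x-y|,\]
which is the $\beta=1$ case of Lemma~\ref{lem:2}. It follows from the mean value theorem together with the pointwise estimate $|\partial_xp_u(z)|\le cu^{-\alpha-1}|z|^{1/\alpha-1}$ and the observation that $|z|^{1/\alpha-1}\le M^{1/\alpha-1}$ for all $z$ between $x$ and $y$. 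This is valid even when the interval straddles the origin, since the hypothesis $\alpha<1/2$ gives $1/\alpha-1>1$, so $|z|^{1/\alpha-1}$ is continuous on $[-M,M]$ and attains its maximum at the endpoints.

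Substituting $u:=t-s$ and assuming $u^\ast\le t$, on the short-time piece $u\in(0,u^\ast]$ the trivial bound gives
\[\int_0^{u^\ast}u^{-2\alpha}\,du=\tfrac{(u^\ast)^{1-2\alpha}}{1-2\alpha}=\tfrac{c}{1-2\alpha}\,M^{(1/\alpha-1)(1-2\alpha)}|x-y|^{1-2\alpha},\]
and on the long-time piece $u\in(u^\ast,t]$ the squared gradient bound integrates to
\[cM^{2(1/\alpha-1)}|x-y|^{2}\int_{u^\ast}^{t}u^{-2\alpha-2}\,du\le \tfrac{c}{2\alpha+1}\,M^{2(1/\alpha-1)}|x-y|^{2}(u^\ast)^{-2\alpha-1}.\]
Plugging in the definition of $u^\ast$, the exponent on $M$ becomes $(1/\alpha-1)[2-(2\alpha+1)]=(1/\alpha-1)(1-2\alpha)$ and the exponent on $|x-y|$ becomes $2-(2\alpha+1)=1-2\alpha$, so this too equals a constant times $M^{(1/\alpha-1)(1-2\alpha)}|x-y|^{1-2\alpha}$. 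Summing the two pieces gives the stated bound, with the factor $(\max(|x|,|y|))^{(\frac{1}{\alpha}-1)2\beta}$ corresponding to $2\beta=1-2\alpha$.

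In the remaining case $u^\ast>t$, only the short-time regime is present, so $\int_0^tu^{-2\alpha}\,du\le ct^{1-2\alpha}\le c(u^\ast)^{1-2\alpha}$, which is again the target.

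The argument contains no serious obstacle; the delicate points are (i) identifying the crossover $u^\ast$, which is forced by matching the two competing pointwise bounds, (ii) checking that the mean value estimate from Lemma~\ref{lem:2} still applies on intervals containing the origin, which uses $\alpha<1/2$, and (iii) verifying that the exponent bookkeeping in both halves of the split produces exactly the same power $|x-y|^{1-2\alpha}M^{(1/\alpha-1)(1-2\alpha)}$ rather than two different rates.
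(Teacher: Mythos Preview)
Your argument is correct and in fact yields the sharp endpoint exponent $2\beta=1-2\alpha$; since $M=\max(|x|,|y|)\le 1$ on $[-1,1]$, this immediately implies the stated bound for every $\beta\le \tfrac12-\alpha$ by monotonicity of $M\mapsto M^{(\frac1\alpha-1)2\beta}$. You should state this reduction explicitly, since as written you only claim the single value $2\beta=1-2\alpha$.

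The paper's proof takes a slightly different route. It splits at $u=|x-y|$ rather than at your scale $u^\ast=M^{1/\alpha-1}|x-y|$. On the long-time piece it uses the same $\beta=1$ gradient bound you do, but on the short-time piece it invokes Lemma~\ref{lem:2} with the \emph{given} parameter $\beta\in(0,\tfrac12-\alpha)$, so that the integral $\int_{t-|x-y|}^{t}(t-s)^{-2\alpha-2\beta}\,ds$ converges and produces exactly the factor $M^{(\frac1\alpha-1)2\beta}$ in the statement. Your approach avoids the interpolated form of Lemma~\ref{lem:2} altogether, using only the extreme cases $\beta=0$ (the trivial bound) and $\beta=1$; the price is that the split point must be shifted to $u^\ast$ so that the two bounds match there. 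The paper's version keeps $\beta$ as a free parameter throughout, which is convenient because the later application (Lemma~\ref{lem:7.2}) plugs in a specific $\beta=\tfrac12-\alpha-\delta'$; your sharper endpoint bound of course covers that case as well.
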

\begin{proof}
\begin{equation}
\label{equt:23_2}
\int_0^t (p_{t-s}(x)-p_{t-s}(y))^2\,ds \le \int_0^{t-|x-y|} (p_{t-s}(x)-p_{t-s}(y))^2\,ds
+ \int_{t-|x-y|}^t (p_{t-s}(x)-p_{t-s}(y))^2\,ds,
\end{equation}
so by Lemma~\ref{lem:2} (by taking $\beta=1$ there) we can bound the first term on the right hand side 
by 
\begin{eqnarray*}
\lefteqn{\left(\max(|x|,|y|)\right)^{(\frac{1}{\alpha}-1)2}\int_0^{t-|x-y|} (t-s)^{-2\alpha-2} |x-y|^2\,ds}
\\
&\leq& C \left(\max(|x|,|y|)\right)^{(\frac{1}{\alpha}-1)2\beta} |x-y|^{-2\alpha-1+2}\\
&\leq&\left(\max(|x|,|y|)\right)^{(\frac{1}{\alpha}-1)2\beta} C |x-y|^{-2\alpha+1}.   
\end{eqnarray*}
By Lemma~\ref{lem:2} again, with $\beta$ as given in this lemma, we can bound the second  term on the right hand side 
of~\eqref{equt:23_2} 
by 
\begin{eqnarray*}
\lefteqn{\left(\max(|x|,|y|)\right)^{(\frac{1}{\alpha}-1)2\beta} |x-y|^{2\beta}
\int_{t-|x-y|}^t (t-s)^{-2\alpha-2\beta}\,ds}
\\
&\leq&  C \left(\max(|x|,|y|)\right)^{(\frac{1}{\alpha}-1)2\beta}|x-y|^{-2\alpha+1},  
\end{eqnarray*}
and we are done.
\end{proof}

\paragraph{Proof of Proposition~\ref{prop}}
\begin{itemize}
\item[{\bf (a)}]
follows by the correspondence between the SPDE~(\ref{equt:4}) and SIE~\eqref{equt:21_1} and the moment bound 
(\ref{equt:23_3}). 

\item[{\bf (b)}]
Let $Z(t,x)=  X(t,x)-S_tX_0(x)-\int_0^t \int_{\Resm}p^{\theta}_{t-s}(x,y) g(s,y)\,dy\,ds$. Then
\begin{eqnarray*}
|Z(t',x)-Z(t,y)|&=& \int_0^t (p^\theta_{t'-s}(x)-p^\theta_{t-s}(y))\sigma(X(s,0))\,dB_s
\\
&&\mbox{}+
 \int_t^{t'} p^\theta_{t'-s}(x)\sigma(X(s,0))\,dB_s\\
&=& \int_0^t (p^\theta_{t'-s}(x)-p^\theta_{t-s}(x))\sigma(X(s,0))\,dB_s\\
&&\mbox{}+
\int_0^t (p^\theta_{t-s}(x)-p^\theta_{t-s}(y))\sigma(X(s,0))\,dB_s\\
&&\mbox{}+
 \int_t^{t'} p^\theta_{t'-s}(x)\sigma(X(s,0))\,dB_s\,.
\end{eqnarray*}
By the Burkholder-Gundy-Davis and H\"older inequalities, the moment bound on $X(s,0)$,
  and Lemmas~\ref{lem:23_1} and~\ref{lem:23_2}, the required moment bound
\[ \IE\left[|Z(t',x)-Z(t,y)|^p\right]\leq C(|t'-t|^{p(1/2-\alpha)}+|x-y|^{p(1/2-\alpha)})\]
now follows. By the Kolmogorov criterion we also get the required H\"older continuity of $Z$.
\end{itemize}
\gdm\nopagebreak

\section{H\"older continuity: Theorem~\ref{thm:H1}}
\label{sec:proofofimprovement}

{\bf Proof of Theorem~\ref{thm:H1}} 

We proceed along the lines of the proof of Theorem~4.1 of~\cite{bib:msp05}.
Fix arbitrary (deterministic) $(t,x), (t',y)$ such that
$|t-t'|\le \epsilon\equiv 2^{-N}$ ($N\in\IN$), $\vert x\vert\leq 2^{-N\alpha}$,
 $\vert x-y\vert\leq 2^{-N\alpha}$ and $t\le t'$ (the 
case $t' \leq t$
works analogously).

In the following we will define small numbers $\delta,\delta',\delta_1\,,\delta_2>0$ as follows. 
As $\xi_1<(\xi\gamma+\frac{1}{2}-\alpha)\wedge1$, we may choose
$\delta\in(0,\frac{1}{2}-\alpha)$ such that 
\begin{equation*}
 \xi_1< ((\xi\gamma+\frac{1}{2}-\alpha)\wedge1)-\alpha\delta<1.
\end{equation*} 
Fix $\delta'\in (0,\delta)$. Now choose $\delta_1\in (0,\delta')$ sufficiently small
such that 
\begin{equation}
\label{equt:11}
 \xi_1< ((\xi\gamma+\frac{1}{2}-\alpha)\wedge1)-\alpha\delta+\alpha\delta_1<1.
\end{equation} 
Moreover define 
\begin{eqnarray*}
p\equiv  ((\xi\gamma+\frac{1}{2}-\alpha)\wedge1)-\alpha(1/2-\alpha)+\alpha\delta_1\,,
\end{eqnarray*} 
and hence by~(\ref{equt:11}) we easily get that 
\begin{eqnarray}
\label{equt:10}
 p+\alpha(1/2-\alpha-\delta)=((\xi\gamma+\frac{1}{2}-\alpha)\wedge1)-\alpha\delta+\alpha\delta_1
\in (\xi_1\,,1). 
\end{eqnarray} 
Also define $\delta_2>0$ sufficiently small such that
\begin{eqnarray}
\label{equt:12}
\delta'-\delta_2>\delta_1\,
\end{eqnarray} 
 and define 
\begin{eqnarray}
\label{equt:17}
\nonumber
 \hat p &\equiv& p+ \alpha(\delta'-\delta_2-\delta_1)\\
\label{equt:18}
&=&((\xi\gamma+\frac{1}{2}-\alpha)\wedge1)-\alpha(1/2-\alpha)+\alpha(\delta'-\delta_2).
\end{eqnarray} 
By~(\ref{equt:12}) we get that 
\[\hat p > p.\]




\noindent Now consider for some random $N_1=N_1(\omega,\xi,\xi_1)$ (to be chosen below in~\eqref{equt:24}),
\begin{eqnarray}
\label{eq:H1}
\lefteqn{\IP\left( |\tilde X(t,x)-\tilde X(t,y)| \geq
|x-y|^{\frac{1}{2}-\alpha-\delta}\epsilon^{p}, (t,x)\in Z_{K,N,\xi}, 
N\ge N_1
 \right)}\\
\nonumber
&&\mbox{}+\IP\left( |\tilde X(t',x)-\tilde X(t,x)| \geq
|t'-t|^{\alpha(\frac{1}{2}-\alpha-\delta)}\epsilon^{p}, (t,x)\in Z_{K,N,\xi}, t'\le T_K,
N\ge N_1
 \right).
\end{eqnarray}
In what follows we are going to obtain the bound on~\eqref{eq:H1}. 
Set
\begin{eqnarray*}
D^{x,y,t,t'}(s)&=&
\left| p_{t-s}(x)- p_{t'-s}(y)\right|^2
|\wX(s,0)|^{2\gamma},\\
D^{x,t'}(s)&=& p_{t'-s}(x)^2
|\wX(s,0)|^{2\gamma}.
\end{eqnarray*}
With this notation, expression (\ref{eq:H1}) is bounded by
\begin{eqnarray}
\label{eq:H3}
& &\IP\Big( |\wX(t,x)-\wX(t,y)|
\geq |x-y|^{\frac{1}{2}-\alpha-\delta}\epsilon^{p}, (t,x)\in Z_{K,N,\xi},N \geq N_1\\
\nonumber
& &\phantom{AAAAAAAAAAAAAAAAAAAAAA}
\int_{0}^{t}  D^{x,y,t,t}(s)  ds
\leq  |x-y|^{1-2\alpha-2\delta'}\epsilon^{2p} \Big)\\
\nonumber
&+&\IP\Big( |\wX(t',x)-\wX(t,x)|
\geq |t'-t|^{\alpha(\frac{1}{2}-\alpha-\delta)}\epsilon^{p}, (t,x)\in Z_{K,N,\xi}, t'\le
T_K, N \geq N_1\\
\nonumber
& &\phantom{A}
\int_{t}^{t'}  D^{x,t'}(s) ds
+ \int_{0}^{t}  D^{x,x,t,t'}(s) ds \leq
(t'-t)^{2\alpha(\frac{1}{2}-\alpha-\delta')}\epsilon^{2p}
\Big)\\
\nonumber
&+& \IP\Big( \int_{0}^{t} 
D^{x,y,t,t}(s) ds
> |x-y|^{1-2\alpha-2\delta'}\epsilon^{2p}, (t,x)\in
Z_{K,N,\xi}, N\ge N_1
\Big)\\
\nonumber
&+& \IP\Big( 
\int_t^{t'}D^{x,t'}(s) ds
+ \int_{0}^{t}  D^{x,x,t,t'}(s)  ds\\
\nonumber
& &\phantom{AAAAAAAAAA}
>  (t'-t)^{2\alpha(\frac{1}{2}-\alpha-\delta')}\epsilon^{2p},
 (t,x)\in Z_{K,N,\xi}, t'\le T_K, N\ge N_1 \Big)\\
\nonumber
&=:& P_{1}+P_{2}+P_{3}+P_{4}.
\end{eqnarray}

\bigskip

Notice that the processes 
\[\tilde{t} \mapsto \int_{0}^{\tilde{t}}
 ( p_{t-s}(x))
\left(\sigma(X^{1}(s,0))-\sigma(X^{2}(s,0)\right) B(ds)\]
are continuous local martingales for any fixed $x,t$ on $0 \leq \tilde{t} \leq t$. 
We bound the appropriate differences of these integrals by considering the respective
quadratic variations of $\wX(t,x)-\wX(t,y)$  and  $\wX(t',x)-\wX(t,x)$
 (see (\ref{equt:4})). By~\eqref{eq:sigma_Hold},
we see that the time integrals in the above
probabilities differ from the appropriate square functions by a multiplicative factor of
$L^2 $. 

If $\delta''=\delta-\delta'>0$, $B$ is a standard one-dimensional Brownian motion with
$B(0)=0$, and $B^{*}(t):=\sup_{0\leq s \leq t} |B(s)|,$ then
$P_1$  of (\ref{eq:H3}) can be bounded
using the Dubins-Schwarz Theorem:
\begin{eqnarray}
\label{P1est}
\nonumber
P_{1}&\leq& \IP\left(
B^{*}(L^2|x-y|^{1-2\alpha-2\delta'}\epsilon^{2p})
\geq |x-y|^{\frac{1}{2}-\alpha-\delta}\epsilon^{p}\right)\\
\nonumber
&=& \IP\left(
B^{*}(1)L |x-y|^{\frac{1}{2}-\alpha-\delta'}\epsilon^{p}
\geq |x-y|^{\frac{1}{2}-\alpha-\delta}\epsilon^{p}\right)
\\
\label{equt:19}
&=& \IP\left( B^{*}(1)
\geq  L^{-1}|x-y|^{-\delta''}  \right)
\leq c_{\ref{P1est}} \exp(-c'_{\ref{P1est}} |x-y|^{-\delta''}),
\end{eqnarray}
where we have used the reflection principle in the last line. Similarly,
\begin{eqnarray}
\label{P2est}
\nonumber
P_{2}&\leq& \IP\left(
B^{*}(L^2|t'-t|^{1-2\alpha-\delta'}\epsilon^{2p})
\geq |t'-t|^{\frac{1}{2}-\alpha-\delta/2}\epsilon^{p}\right)\\
\nonumber
&=& \IP\left(
B^{*}(1)L |t'-t|^{2\alpha(\frac{1}{2}-\alpha-\delta')}\epsilon^{p}
\geq |t'-t|^{\alpha(\frac{1}{2}-\alpha-\delta)}\epsilon^{p}\right)
\\
\label{equt:20}
&=& \IP\left( B^{*}(1)
\geq  L^{-1}|t'-t|^{-\alpha\delta'' } \right)
\leq c_{\ref{P1est}} \exp(-c'_{\ref{P1est}} |t'-t|^{-\alpha\delta''}),
\end{eqnarray}
Here the constants $c_{\ref{P1est}}$  and
$c'_{\ref{P1est}}$ depend on $L$. 


Before we proceed with bounds on $P_3, P_4$, in the next lemma, we will obtain a useful bound on $\tilde{X}(s,0)$. 


\begin{lemma}
\label{tubnd}
Let $N\ge N_{\xi}.$ Then on $\{\omega:(t,x)\in Z_{K,N,\xi}\}$,
\begin{eqnarray}
\label{equt:tubnd}
|\tilde X(s,0)| &\le& 3\ep^{\xi}
\quad \quad \quad \text{ for } s \in [t-\epsilon,t'], \\
\label{1.1}
|\tilde X(s,0)| &\leq&  (4+ K)2^{\xi N_{\xi}}(t-s)^{\xi}
\quad \quad \text{ for } s \in [0,t-\epsilon].
\end{eqnarray}
\end{lemma}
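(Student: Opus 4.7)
The plan is to derive both inequalities by propagating the smallness of $\tilde X$ at the witness point $(\hat t,\hat x)$ promised by the definition of $Z_{K,N,\xi}$ (it satisfies $|t-\hat t|\le \epsilon$, $|x-\hat x|\le \epsilon^{\alpha}$, and $|\tilde X(\hat t,\hat x)|\le \epsilon^{\xi}$) to an arbitrary $(s,0)$ via the triangle inequality, using two applications of (\ref{reghyp}) through the base point $(t,x)$, at a dyadic scale chosen to match $|s-t|$.

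For the first bound, take $s\in[t-\epsilon,t']$. Since $|t'-t|\le \epsilon = 2^{-N}$, we have $|s-t|\le \epsilon$, while $|x|\le 2^{-N\alpha}$ is already built into $(t,x)\in Z_{K,N,\xi}$. Applying (\ref{reghyp}) at scale $N$ to the pairs $(t,x)\leftrightarrow(s,0)$ and $(t,x)\leftrightarrow(\hat t,\hat x)$ gives $|\tilde X(s,0)-\tilde X(t,x)|\le\epsilon^{\xi}$ and $|\tilde X(t,x)-\tilde X(\hat t,\hat x)|\le\epsilon^{\xi}$. Combining with $|\tilde X(\hat t,\hat x)|\le\epsilon^{\xi}$ yields \eqref{equt:tubnd}.

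For the second bound, fix $s\in[0,t-\epsilon]$ and let $n$ be the unique integer with $2^{-n-1}<t-s\le 2^{-n}$. Since $t-s\ge \epsilon$ we have $n\le N$. I would first observe that the same witness $(\hat t,\hat x)$ also exhibits $(t,x)\in Z_{K,n,\xi}$, because all of the defining inequalities weaken monotonically as $n$ decreases from $N$: $|t-\hat t|\le 2^{-N}\le 2^{-n}$, $|x-\hat x|\le 2^{-N\alpha}\le 2^{-n\alpha}$, $|\tilde X(\hat t,\hat x)|\le 2^{-N\xi}\le 2^{-n\xi}$, and $|x|\le 2^{-n\alpha}$. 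Then I split on the size of $n$. If $n\ge N_{\xi}$, hypothesis (\ref{reghyp}) applies at scale $n$ and the same triangle-inequality argument as in the first part gives $|\tilde X(s,0)|\le 3\cdot 2^{-n\xi}$; since $2^{-n\xi}< 2^{\xi}(t-s)^{\xi}\le 2(t-s)^{\xi}$, this yields $|\tilde X(s,0)|\le 6(t-s)^{\xi}$. If instead $n< N_{\xi}$, then $t-s>2^{-n-1}\ge 2^{-N_{\xi}}$, so $2^{N_{\xi}\xi}(t-s)^{\xi}>1$, while $s\le T_K$ gives the crude bound $|\tilde X(s,0)|\le |X^1(s,0)|+|X^2(s,0)|\le K$, whence $|\tilde X(s,0)|\le K\cdot 2^{N_{\xi}\xi}(t-s)^{\xi}$. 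In either subcase \eqref{1.1} follows, as the prefactor $2^{N_{\xi}\xi}\ge 1$ absorbs the constant from the first subcase into $(4+K)2^{N_{\xi}\xi}$.

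I do not anticipate a real technical obstacle: the content of the lemma is essentially that the $\xi$-H\"older control along the slice $y=0$ is a consequence of the scale-$N$ regularity of $\tilde X$ near $(t,x)$, and the only piece of bookkeeping is verifying that the witness inherited from scale $N$ continues to certify $(t,x)\in Z_{K,n,\xi}$ at every coarser scale $n\le N$, which is immediate from monotonicity.
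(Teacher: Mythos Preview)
Your approach is essentially the same as the paper's: triangle inequality through $(t,x)$ and the witness $(\hat t,\hat x)$, with a dyadic scale matched to $t-s$ in the intermediate range and the crude $K$-bound in the far range. Your explicit check that $(t,x)\in Z_{K,n,\xi}$ for all $n\le N$ is actually more careful than the paper, which leaves that monotonicity implicit.

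One small constant slip: in the case $n\ge N_\xi$ you bound all three increments at scale $n$ and land on $6(t-s)^\xi$, then claim this is absorbed by $(4+K)2^{\xi N_\xi}$. That absorption needs $6\le (4+K)2^{\xi N_\xi}$, which is not automatic for small $K$ and $\xi$. The paper avoids this by keeping the first two terms at the \emph{fine} scale $N$: since $t-s\ge\epsilon=2^{-N}$ one has $|\tilde X(\hat t,\hat x)|+|\tilde X(\hat t,\hat x)-\tilde X(t,x)|\le 2\cdot 2^{-N\xi}\le 2(t-s)^\xi$, and only the third term $|\tilde X(t,x)-\tilde X(s,0)|\le 2^{-\tilde N\xi}\le 2(t-s)^\xi$ needs the coarse scale. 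That gives $4(t-s)^\xi$ and matches the stated constant exactly. This is a one-line sharpening of your argument, not a different idea.
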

\begin{proof}

\noindent
Assume $(t,x)\in Z_{K,N,\xi}$, $0\le t'\le T_K$ and choose $(\hat{t},\hat{x})$ such that
\begin{eqnarray*}
\label{t0x0}
\hbox{
$\hat{t}\le T_K$, $|t-\hat{t}|\leq \epsilon=2^{-N}, |\hat{x}-x|\leq \epsilon^{\alpha}$, and $|\tilde X(\hat{t},\hat{x})|\le
2^{-N\xi}=\epsilon^\xi$.}
\end{eqnarray*}
We first observe that for $s \in [t-\epsilon,t']$, we trivially have $|t-s|\leq \epsilon$.
Therefore by (\ref{reghyp}) and the definition of $Z_{K,N,\xi}$, for $s \in [t-\epsilon,t']$ we get 
\begin{eqnarray}
\nonumber
|\tilde X(s,0)|&\le & |\tilde X(\hat{t},\hat{x})|+|\tilde X(\hat{t},\hat{x})-\tilde X(t,x)|+
 |\tilde X(t,x)-\tilde X(s,0)|\\
\nonumber
&\le &  3 \cdot 2^{-N\xi}\\
\label{tubnd1}
&= &3\epsilon^{\xi},
\end{eqnarray}
which proves (\ref{equt:tubnd}).

If $s \in [t-2^{-N_{\xi}},t-\epsilon]$, then there exists $\tilde{N}\geq N_{\xi}$ such that  $2^{-(\tilde{N}+1)}\leq
 t-s\leq 2^{-\tilde{N}}$ so that as in (\ref{tubnd1}) we can bound 
\begin{eqnarray*}
\nonumber
|\tilde X(s,0)|&\le & |\tilde X(\hat{t},\hat{x})|+|\tilde X(\hat{t},\hat{x})-\tilde X(t,x)|+
 |\tilde X(t,x)-\tilde X(s,0)|\\
\nonumber
&\le &  2^{-N\xi}+2^{-N\xi}+2^{\xi} \cdot 2^{-(\tilde{N}+1)\xi}\\
\nonumber
&\leq& 2\cdot (t-s)^{\xi}+ 2 \cdot (t-s)^{\xi}\\
\label{tubnd2}
&= &4(t-s)^{\xi},
\end{eqnarray*}
which proves~(\ref{1.1}) for $s \in [t-2^{-N_{\xi}},t-\epsilon]$. For $s \in [0,t-2^{-N_{\xi}}]$ 
we bound
\begin{eqnarray}
\nonumber
|\tilde X(s,0)|&\le & K\\
\nonumber
&\le &  K(t-s)^{-\xi}(t-s)^{\xi}\\
\nonumber
&\leq& K2^{N_{\xi}\xi}(t-s)^{\xi},
\end{eqnarray}
and we are done. 

\end{proof}

For the rest of  this section $C(K)$ will be a constant depending on $K$ which may change from line to line. The next lemma is crucial for bounding 
 $P_3$. 
\begin{lemma}
\label{lem:7.2}
Let $N\ge N_{\xi}.$ Then on $\{\omega:(t,x)\in Z_{K,N,\xi}\}$,
 \[ \int_{0}^{t} 
D^{x,y,t,t}(s) ds
\leq C(K)2^{2\gamma\xi N_{\xi}}\ep^{2\hat p} |x-y|^{1-2\alpha-2\delta'}.
\]
\end{lemma}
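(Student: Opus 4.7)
My plan is to split the integral $\int_0^t$ at $s=t-\epsilon$ and estimate the two pieces separately, combining the two bounds on $|\tilde X(s,0)|$ furnished by Lemma~\ref{tubnd} with the kernel-difference estimate of Lemma~\ref{lem:2}. The crucial observation is that since $|x|,|y|,|x-y|\le\epsilon^\alpha$ (the bound for $|y|$ following from $|y|\le|x|+|x-y|\le 2\epsilon^\alpha$), any surplus powers of $\max(|x|,|y|)$ or of $|x-y|$ can be freely converted into powers of $\epsilon$ at rate $\alpha$. The margin $\delta_2>0$ built into $\hat p$ through the definitions~\eqref{equt:11}--\eqref{equt:18} is what ultimately absorbs the discrepancy between the natural $\epsilon$-exponent I produce and the required $2\hat p$.

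On the near interval $[t-\epsilon,t]$ I use $|\tilde X(s,0)|\le 3\epsilon^{\xi}$ from~\eqref{equt:tubnd}, factor out the resulting $\epsilon^{2\gamma\xi}$, and after the change of variable $u=t-s$ apply Lemma~\ref{lem:2} with exponent $\beta=\tfrac12-\alpha-\delta'$. With this choice the surviving $u$-integral $\int_0^\epsilon u^{-2\alpha-2\beta}\,du$ converges at $0$ and equals $C\epsilon^{2\delta'}$. Bounding $(\max(|x|,|y|))^{(1/\alpha-1)2\beta}$ by $\epsilon^{(1-\alpha)(1-2\alpha-2\delta')}$ and collecting all factors, the near part is bounded by $C|x-y|^{1-2\alpha-2\delta'}\epsilon^{2\gamma\xi+1-3\alpha+2\alpha^2+2\alpha\delta'}$.

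On the far interval $[0,t-\epsilon]$ I use $|\tilde X(s,0)|\le(4+K)2^{\xi N_\xi}(t-s)^\xi$ from~\eqref{1.1}; the task then becomes estimating $C(K)\,2^{2\gamma\xi N_\xi}\int_\epsilon^{t}(p_u(x)-p_u(y))^2\,u^{2\gamma\xi}\,du$. Here I apply Lemma~\ref{lem:2} with $\beta$ chosen strictly larger than $\tfrac12-\alpha+\gamma\xi$ and still $\le1$, which is possible precisely because $\xi\gamma+\tfrac12-\alpha<1$ in the generic case. The $u$-integrand then diverges at $0$, producing $\int_\epsilon^t u^{-2\alpha-2\beta+2\gamma\xi}\,du\le C\epsilon^{1-2\alpha-2\beta+2\gamma\xi}$. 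The excess $|x-y|^{2\beta-(1-2\alpha-2\delta')}$ converts to $\epsilon^{\alpha(2\beta-1+2\alpha+2\delta')}$, the $\max$-factor to $\epsilon^{(1-\alpha)2\beta}$, and a direct computation reveals a clean cancellation: all $\beta$-dependent terms in the $\epsilon$-exponent disappear, leaving exactly $2\gamma\xi+1-3\alpha+2\alpha^2+2\alpha\delta'$, the same exponent as on the near piece.

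Comparing this exponent with $2\hat p=2\xi\gamma+1-3\alpha+2\alpha^2+2\alpha\delta'-2\alpha\delta_2$ (obtained by expanding~\eqref{equt:18}) shows a surplus of exactly $2\alpha\delta_2>0$, which is absorbed into $\epsilon^{2\hat p}$ since $\epsilon\le1$, yielding the claimed bound. The one technical nuisance is the saturated case $\xi\gamma+\tfrac12-\alpha\ge1$, where no $\beta\in[0,1]$ makes the far-part $u$-integrand divergent at $0$; in that regime I first replace $(t-s)^{2\gamma\xi}$ by $(t-s)^r$ with $r$ slightly below $1+2\alpha$, absorbing the bounded factor $T_K^{2\gamma\xi-r}\le K^{2\gamma\xi}$ into $C(K)$, and then proceed as above. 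Apart from this saturation issue the main obstacle is simply careful bookkeeping of exponents and a judicious choice of $\beta$ on each subinterval.
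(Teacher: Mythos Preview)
Your proof is correct and follows the paper's strategy: split $\int_0^t$ at $t-\epsilon$, invoke Lemma~\ref{tubnd} on each piece, and control the kernel differences through Lemma~\ref{lem:2}. The details differ slightly. On the near interval the paper appeals to the pre-integrated estimate Lemma~\ref{lem:23_2} (with the same $\beta=\tfrac12-\alpha-\delta'$), whereas you integrate Lemma~\ref{lem:2} by hand over $[0,\epsilon]$; both yield the same $\epsilon$-exponent. On the far interval the paper always takes $\beta=1$ in Lemma~\ref{lem:2} and handles the generic and saturated regimes in one stroke by bounding $\int_\epsilon^t u^{-2\alpha-2+2\gamma\xi}\,du\le C(K,\delta_2)\,\epsilon^{((-2\alpha-1+2\gamma\xi)\wedge 0)-2\alpha\delta_2}$, so no separate case analysis is needed; your variable-$\beta$ argument instead makes the cancellation of $\beta$ in the final exponent explicit, at the cost of the extra saturation case you describe. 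Either way one lands on the same exponent $2\gamma\xi+1-3\alpha+2\alpha^2+2\alpha\delta'\ge 2\hat p$.
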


\begin{proof} First we split the integral: 
\begin{eqnarray*}
\int_{0}^{t} 
D^{x,y,t,t}(s) ds&=& \int_{t-\epsilon}^{t} 
D^{x,y,t,t}(s) ds+\int_{0}^{t-\epsilon} 
D^{x,y,t,t}(s) ds \\
&=:& D_1(t)+D_2(t).
\end{eqnarray*}
By Lemma~\ref{tubnd} we get 
\begin{eqnarray*}
D_1(t)&\leq& \int_{t-\ep}^t (p_{t-s}(x)-p_{t-s}(y))^2 \ep^{2\xi\gamma}\,ds. 
\end{eqnarray*}
Now apply Lemma~\ref{lem:23_2} with $\beta=1/2-\alpha-\delta'$ to get 
\begin{eqnarray*}
D_1(t)&\leq&  c \ep^{2\xi\gamma}|x-y|^{1-2\alpha}
 \left(\max(|x|,|y|)\right)^{(1/\alpha-1)2\beta}.
\end{eqnarray*}
Now recall that 
\begin{eqnarray}
\label{equt:8}
 \max(|x|,|y|)\leq c\ep^{\alpha}
\end{eqnarray}
 and we get 
\begin{eqnarray*}
D_1(t)&\leq&  c \ep^{2\xi\gamma} \ep^{2\delta'} |x-y|^{1-2\alpha-2\delta'}
 \ep^{(1-\alpha)2\beta}\\
&=&c\ep^{2(1/2 - \alpha(3/2-\alpha)+\alpha\delta'+\gamma\xi)}|x-y|^{1-2\alpha-2\delta'}\\
&\leq& c\ep^{2\hat{p}}|x-y|^{1-2\alpha-2\delta'},
\end{eqnarray*}
where the last line follows by~(\ref{equt:18}). 

Now we will bound $D_2(t)$.  
By Lemma~\ref{tubnd} we get 
\begin{eqnarray*}
D_2(t)&\leq& C(K)2^{2\gamma\xi N_{\xi}} \int_{0}^{t-\ep} (p_{t-s}(x)-p_{t-s}(y))^2 (t-s)^{\xi}\,ds. 
\end{eqnarray*}
Apply Lemma~\ref{lem:2} with $\beta=1$ and use~(\ref{equt:8}) to get 
\begin{eqnarray*}
D_2(t)&\leq&  C(K)2^{2\gamma\xi N_{\xi}} \int_0^{t-\ep} (t-s)^{-2\alpha-2+2\gamma\xi}|x-y|^{2}
\ep^{2(1-\alpha)} \,ds\\
&=&C(K,\delta,\delta_2)2^{2\gamma\xi N_{\xi}} \ep^{((-2\alpha-1+2\gamma\xi)\wedge 0)-2\alpha\delta_2} \ep^{2(1-\alpha)} 
 |x-y|^{1-2\alpha-2\delta'}|x-y|^{1+2\alpha+2\delta'}\\
&\leq& C(K,\delta,\delta_2)2^{2\gamma\xi N_{\xi}} \ep^{((-2\alpha-1+2\gamma\xi)\wedge 0)-2\alpha\delta_2}\ep^{2(1-\alpha)} 
 |x-y|^{1-2\alpha-2\delta'}\ep^{\alpha(1+2\alpha+2\delta')}\\
&=&C(K,\delta,\delta_2)2^{2\gamma\xi N_{\xi}}\ep^{2\hat p} |x-y|^{1-2\alpha-2\delta'},
\end{eqnarray*}
where the last equality follows easily by the simple algebra and the definition of $\hat p$. 

\end{proof}

The next lemma is important  for bounding 
 $P_4$.
\begin{lemma}
\label{lem:7.3}
Let $N\ge N_{\xi}.$ Then on $\{\omega:(t,x)\in Z_{K,N,\xi}\}$,
\begin{eqnarray}
\label{eq:7.3.1}
\lefteqn{\int_t^{t'}D^{x,t'}(s) ds
+ \int_{0}^{t}  D^{x,x,t,t'}(s)  ds}\\
\nonumber
&\leq& C(K)2^{2\gamma\xi N_{\xi}}\ep^{2\hat p} |t'-t|^{\alpha(1-2\alpha-2\delta')}. 
\end{eqnarray}
\end{lemma}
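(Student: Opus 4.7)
The plan is to mirror the three-term decomposition used in Lemma~\ref{lem:7.2}, trading the spatial kernel estimates there (Lemmas~\ref{lem:2} and~\ref{lem:23_2}) for the corresponding temporal kernel estimates (Lemma~\ref{lem:23_1} plus a direct differentiation bound). Specifically I will split the left-hand side of~(\ref{eq:7.3.1}) as
$$
\tilde D_1 := \int_t^{t'} p_{t'-s}(x)^2 |\tilde X(s,0)|^{2\gamma}\,ds,\quad
\tilde D_2 := \int_{t-\epsilon}^{t} D^{x,x,t,t'}(s)\,ds,\quad
\tilde D_3 := \int_0^{t-\epsilon} D^{x,x,t,t'}(s)\,ds,
$$
so that $\tilde D_1$ and $\tilde D_2$ both live in the regime $s\ge t-\epsilon$ where Lemma~\ref{tubnd} gives $|\tilde X(s,0)|\le 3\epsilon^\xi$, and $\tilde D_3$ lives in the regime $s\le t-\epsilon$ where $|\tilde X(s,0)|\le (4+K)2^{\xi N_\xi}(t-s)^\xi$.

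For $\tilde D_1$, I will use the elementary bound $p_{t'-s}(x)^2\le c(t'-s)^{-2\alpha}$, so that $\tilde D_1\le C\epsilon^{2\gamma\xi}|t'-t|^{1-2\alpha}$. For $\tilde D_2$, Lemma~\ref{lem:23_1} gives the identical majorant $C\epsilon^{2\gamma\xi}|t'-t|^{1-2\alpha}$. In both cases I will then extract the factor $|t'-t|^{\alpha(1-2\alpha-2\delta')}$ by writing
$$
|t'-t|^{1-2\alpha}=|t'-t|^{\alpha(1-2\alpha-2\delta')}\cdot|t'-t|^{(1-\alpha)(1-2\alpha)+2\alpha^2\delta'}
$$
and bounding the surplus factor by a power of $\epsilon$ via $|t'-t|\le\epsilon$. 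A short arithmetic check, using the definition of $\hat p$ in~(\ref{equt:18}), verifies that the resulting exponent in $\epsilon$ is at least $2\hat p$.

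For $\tilde D_3$, I will obtain a pointwise temporal bound on $p_u(x)$ by differentiating~(\ref{equt:32}) in $u$: one gets $|\partial_u p_u(x)|\le c(u^{-\alpha-1}+u^{-\alpha-2}|x|^{1/\alpha})$, and on the range $s\le t-\epsilon$ the constraints $|x|^{1/\alpha}\le\epsilon$ and $t-s\ge\epsilon$ collapse both terms to $c(t-s)^{-\alpha-1}$. This yields $|p_{t-s}(x)-p_{t'-s}(x)|\le c|t'-t|(t-s)^{-\alpha-1}$, and inserting the temporal bound on $|\tilde X(s,0)|$ gives
$$
\tilde D_3\le C(K)\,2^{2\gamma\xi N_\xi}\,|t'-t|^2\int_\epsilon^{T}u^{-2\alpha-2+2\gamma\xi}\,du.
$$
A split $|t'-t|^2=|t'-t|^{\alpha(1-2\alpha-2\delta')}\cdot|t'-t|^{2-\alpha(1-2\alpha-2\delta')}$, followed once more by $|t'-t|\le\epsilon$, reduces the estimate to a routine exponent count; the surplus $\epsilon^{2\alpha\delta_2}$ coming from $\hat p=p+\alpha(\delta'-\delta_2-\delta_1)$ in~(\ref{equt:17}) absorbs any logarithmic factor that may appear at the borderline $2\gamma\xi=1+2\alpha$.

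The one delicate point is the $\tilde D_3$ estimate: if $\xi\gamma>1/2+\alpha$ (so the remaining integral is finite without any factor of $\epsilon$) the arithmetic is slightly different, and one must use the $\wedge 1$ cap in the definition of $\hat p$ to close. This is where I expect the bookkeeping to be most fiddly, but as in Lemma~\ref{lem:7.2} the choice of $\hat p$ was made precisely so that these two regimes give the same final exponent. Combining the three bounds yields~(\ref{eq:7.3.1}). \gdm
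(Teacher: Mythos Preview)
Your proof is correct and follows the same three--piece decomposition as the paper: the paper calls your $\tilde D_1$ the first integral, your $\tilde D_2$ its $D_1(t)$, and your $\tilde D_3$ its $D_2(t)$, and handles the first two exactly as you do (via $p_{t'-s}(x)^2\le c(t'-s)^{-2\alpha}$ and Lemma~\ref{lem:23_1}, then the same $|t'-t|\le\epsilon$ exponent trade). The one genuine difference is in $\tilde D_3$: you apply the mean value theorem directly to $u\mapsto p_u(x)$ and use $|x|^{1/\alpha}\le\epsilon\le t-s\le u$ to collapse $|\partial_u p_u(x)|\le c(t-s)^{-\alpha-1}$, whereas the paper splits $p_{t-s}(x)-p_{t'-s}(x)$ into a ``power--law'' piece $((t-s)^{-\alpha}-(t'-s)^{-\alpha})e^{-|x|^{1/\alpha}/(t-s)}$ and an ``exponential'' piece $(t'-s)^{-\alpha}(e^{-|x|^{1/\alpha}/(t-s)}-e^{-|x|^{1/\alpha}/(t'-s)})$ and bounds each separately. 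Your route is slightly more economical (one integral instead of two), but both land on the same $\int_\epsilon^{T} u^{-2\alpha-2+2\gamma\xi}\,du$ and the identical exponent arithmetic, including the use of the $2\alpha\delta_2$ slack at the borderline $2\gamma\xi=1+2\alpha$. One small arithmetic slip: the surplus exponent on $|t'-t|$ in your $\tilde D_1,\tilde D_2$ step is $(1-\alpha)(1-2\alpha)+2\alpha\delta'$, not $+2\alpha^2\delta'$; this does not affect the conclusion.
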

\begin{proof}
By Lemma~\ref{tubnd} we have,
\begin{eqnarray}
\nonumber
\int_t^{t'}D^{x,t'}(s) ds &=& \int_t^{t'} p_{t'-s}(x)^2 |\widetilde{X}(s,0)|^{2\gamma}\,ds\\
\nonumber
&\leq&  c\int_t^{t'} p_{t'-s}(0)^2 \ep^{2\xi\gamma}\,ds\\
\nonumber
&=&c\ep^{2\xi\gamma}\int_t^{t'} (t'-s)^{-2\alpha}\,ds\\
\nonumber
&=&c\ep^{2\xi\gamma}|t'-t|^{1-2\alpha}
\end{eqnarray}
As for the second term at the left hand side of~\eqref{eq:7.3.1}, we first split it: 
\begin{eqnarray*}
\int_{0}^{t}  D^{x,x,t,t'}(s)  ds&=& \int_{t-\ep}^{t}  D^{x,x,t,t'}(s)\,ds +\int_{0}^{t-\ep}  D^{x,x,t,t'}(s)\,ds\\
&=:& D_1(t)+D_2(t).
\end{eqnarray*}
Then by Lemma~\ref{lem:23_1} and \eqref{tubnd1} we have 
\begin{eqnarray}
\nonumber
D_1(t)&=& \int_{t-\ep}^{t}  \left| p_{t-s}(x)- p_{t'-s}(x)\right|^2|\widetilde{X}(s,0)|^{2\gamma}\,ds\\
\nonumber
&\leq&c\ep^{2\gamma\xi}|t'-t|^{1-2\alpha}
\\
\nonumber
&\leq& c \ep^{2\gamma\xi}
  \ep^{2(1/2-\alpha-\alpha(1/2-\alpha)+\alpha\delta') }
|t'-t|^{2\alpha(1/2-\alpha-\delta')}\\
\label{equt:14}
&\leq& c \ep^{2\hat p}|t'-t|^{2\alpha(1/2-\alpha-\delta')},
\end{eqnarray}
where the last inequality follows since 
\begin{eqnarray}
\label{equt:13}
 \hat p< 1/2-\alpha+\gamma\xi-\alpha(1/2-\alpha)+\alpha\delta'.
\end{eqnarray} 

As for $D_2(t)$, we again use Lemma~\ref{tubnd}, and also argue similarly to the proof of Lemma~\ref{lem:23_1}:

\begin{eqnarray*}
D_2(t)&=& \int_0^{t-\ep}  \left| p_{t-s}(x)- p_{t'-s}(x)\right|^2|\widetilde{X}(s,0)|^{2\gamma}\,ds\\
\nonumber
 &\leq&C(K)2^{2\gamma\xi N_{\xi}} \int_0^{t-\ep}  \left| ((t-s)^{-\alpha}-(t'-s)^{-\alpha})e^{-\frac{|x|^{1/\alpha}}{t-s}}\right|^2 (t-s)^{2\gamma\xi}\,ds
\\
\nonumber
&&\mbox{}+
 +C(K)2^{2\gamma\xi N_{\xi}}\int_0^{t-\ep} \left| (t'-s)^{-\alpha}(e^{-\frac{|x|^{1/\alpha}}{t-s}}-e^{-\frac{|x|^{1/\alpha}}{t'-s}})\right|^2
 (t-s)^{2\gamma\xi}\,ds
\\
\nonumber
&=:&D_{2,1}+D_{2,2}.
\end{eqnarray*}
Then we easily have
\begin{eqnarray*}
D_{2,1}
 &\leq&C(K)2^{2\gamma\xi N_{\xi}} \int_0^{t-\ep} ((t-s)^{-2\alpha-2}(t'-t)^{2} (t-s)^{2\gamma\xi}\,ds
\\
\nonumber
&\leq&C(K)2^{2\gamma\xi N_{\xi}} \ep^{((-2\alpha-1+2\gamma\xi)\wedge 0)-2\alpha \delta_2} (t'-t)^{2}\\
\nonumber
&\leq&C(K)2^{2\gamma\xi N_{\xi}} \ep^{((-2\alpha-1+2\gamma\xi)\wedge 0)-2\alpha \delta_2}\ep^{2-2\alpha(1/2-\alpha-\delta')} |t'-t|^{2\alpha(1/2-\alpha-\delta')}
\\
\nonumber
&=&C(K)2^{2\gamma\xi N_{\xi}} \ep^{2(((-\alpha+1/2+\gamma\xi)\wedge 1)-\alpha \delta_2-\alpha(1/2-\alpha)+\alpha\delta')} |t'-t|^{2\alpha(1/2-\alpha-\delta')}
\\
&=& C(K)2^{2\gamma\xi N_{\xi}} \ep^{2\hat p} (t'-t)^{\alpha(1-2\alpha-2\delta')},
\end{eqnarray*}
and 
\begin{eqnarray*}
D_{2,2}
 &\leq&C(K)2^{2\gamma\xi N_{\xi}} \int_0^{t-\ep} (t'-s)^{-2\alpha}\left|\frac{|x|^{1/\alpha}}{t-s}-\frac{|x|^{1/\alpha}}{t'-s}\right|^{2}
(t-s)^{2\gamma\xi}
 \,ds
\\
\nonumber
&\leq& C(K)2^{2\gamma\xi N_{\xi}} |x|^{2/\alpha}\int_0^{t-\ep} (t'-s)^{-2\alpha}(t-s)^{-4}|t'-t|^2
(t-s)^{2\gamma\xi}
 \,ds
\\
&\leq& C(K)2^{2\gamma\xi N_{\xi}} |x|^{2/\alpha} \ep^{-3-2\alpha+2\gamma\xi}|t'-t|^2\\
&\leq& C(K)2^{2\gamma\xi N_{\xi}} \ep^{2} \ep^{-3-2\alpha+2\gamma\xi}|t'-t|^{2\alpha(1/2-\alpha-\delta')}
 \ep^{2-2\alpha(1/2-\alpha-\delta')}\\
&=& C(K)2^{2\gamma\xi N_{\xi}} \ep^{2(1/2-\alpha+\gamma\xi-\alpha(1/2-\alpha)+\alpha\delta') }|t'-t|^{2\alpha(1/2-\alpha-\delta')}\\
&\leq& C(K)2^{2\gamma\xi N_{\xi}} \ep^{2\hat p}|t'-t|^{2\alpha(1/2-\alpha-\delta')},
\end{eqnarray*}
where the last inequality follows by~(\ref{equt:13}).

Combining the above bounds, we are done. 

\end{proof}

We can finally conclude that in (\ref{eq:H3}), $P_{3}=P_{4}=0$ if
\begin{equation}
\label{eq:Hoelderimp1}
C(K) \epsilon^{2\hat p}
2^{2  N_{\xi} \xi \gamma} 
< \epsilon^{2p}
\end{equation}
 For (\ref{eq:Hoelderimp1}) it is equivalent to show 
\[ C(K)<2^{2N(\hat p -p)-2N_{\xi}\gamma\xi}\]
and since $\hat p-p=\delta'-\delta_1-\delta_2>0$ we require
\begin{eqnarray}
\nonumber
 N&>& \left\lfloor\frac{2\gamma\xi N_{\xi}+\log C(K)}{2(\hat p -p)}\right\rfloor+1\\
\label{equt:21}
&=&\left\lfloor\frac{2\gamma\xi N_{\xi}+\log C(K)}{2(\delta'-\delta_1-\delta_2)}\right\rfloor+1.
\end{eqnarray} 
where $\lfloor\cdot\rfloor$ is the greatest integer function. Hence by~(\ref{equt:21}) we can choose the constant 
\begin{eqnarray}
\label{equt:22} 
c_{\ref{equt:22}}=c_{\ref{equt:22}}(K,\xi,\delta,\delta_1,\delta',\delta_2) 
\end{eqnarray}
such that for 
\begin{eqnarray*}
\label{equt:23} 
N\geq \left[c_{\ref{equt:22}}N_{\xi}\right]
\end{eqnarray*}
(\ref{eq:Hoelderimp1}) holds. 
 Note that the constant $c_{\ref{equt:22}}$ depends ultimately on $\xi,\xi_1$ and $K$. 
Hence (\ref{eq:H3}), (\ref{equt:19}), (\ref{equt:20}) imply that if 
\begin{eqnarray}
 \label{equt:24}
N_1(\omega,\xi,\xi_1\,,K)=N_{\xi}\vee\left[c_{\ref{equt:22}}N_{\xi}\right]
\end{eqnarray} 
then  for
$d((t,x),(t',y))\le 2^{-N\alpha}$, $t\le t'$,
\begin{eqnarray}
\nonumber
& &\IP\Big(|\tilde X(t,x)-\tilde X(t,y)|\ge |x-y|^{1/2-\alpha-\delta}2^{-Np},\ (t,x)\in
Z_{K,N,\xi}, N\ge N_1 \Big )\\
\nonumber
& &\qquad+\IP \Big(|\tilde X(t',x)-\tilde X(t,x)|\ge |t'-t|^{\alpha
(1/2-\alpha-\delta)}2^{-Np},
\ (t,x)\in Z_{K,N,\xi}, t'\le T_K, N\ge N_1 \Big)\\
\label{keybnd}
& &\le
c_{\ref{P1est}}(\exp\Bigl(-c'_{\ref{P1est}}|x-y|^{-\delta''}\Bigr)
+\exp\Bigl(-c'_{\ref{P1est}}|t'-t|^{-\alpha\delta''}\Bigr)).
\end{eqnarray}
\noindent
Now set
\begin {eqnarray*}
M_{n,N,K}&=& \max\{|\tilde X(j2^{-n},(z+1)2^{-\alpha n})-\tilde X(j2^{-n},z2^{-\alpha n})|\\
& &\qquad +|\tilde X((j+1)2^{-n},z2^{-\alpha n})-\tilde X(j2^{-n},z2^{-\alpha n})|:\\
& &\qquad |z|\le 2^{\alpha n}, (j+1)2^{- n}\le T_K, j\in \IZ_+, z\in\IZ, \\
& & (j2^{-n},z2^{-\alpha n})\in Z_{K,N,\xi}\}.
\end{eqnarray*}
(\ref{keybnd}) implies that if
\[A_N=\{\omega:\hbox{ for some }n\ge N,\ M_{n,N,K}\ge 2\cdot
2^{-n\alpha (1/2-\alpha-\delta)}2^{-Np},\ N\ge N_1\},
\]
then for some fixed constants $C,c_1,c_2>0$, 
\begin{eqnarray*}
\IP(\cup_{N'\ge N}A_{N'})&\le& C\sum_{N'=N}^\infty\sum_{n=N'}^\infty
K 2^{(\alpha+1)n} e^{-c_1 2^{n\delta''\alpha}}\\
&\le&  C K \eta_N,\\
\end{eqnarray*}
where $\eta_N =e^{-c_{2} 2^{N \delta''\alpha}}.$
Therefore $N_2(\omega)=\min\{N\in\IN:\omega\in A_{N'}^c\ \hbox{for all }N'\ge
N\}<\infty$ a.s. and in fact
\begin{eqnarray}
\label{N3bnd}
\IP(N_2> N)=\IP(\cup_{N'\ge N}A_{N'})\le  CK \eta_N.
\end{eqnarray}
Choose $m \in \IN$ with $m > 2/\alpha$ and assume
$N\ge (N_2+m)\vee (N_1+m)$.  Let $(t,x)\in Z_{K,N,\xi}$, $d((t',y),(t,x))\le
2^{-N\alpha}$, and $t'\le T_K$.  For $n\ge N$ let $t_n\in 2^{-n}\IZ_+$ and $x_{n}\in
2^{-\alpha n}\IZ$  be the unique points so that $t_n\le t<t_n+2^{-n},$
$x_{n}\le x<x_{n}+2^{-\alpha n}$
for $x\ge 0$ and $x_{n}-2^{-\alpha n}<x\le x_{n}$ if $x<0$.
Similarly define $t'_n$ and
$y_n$ with $(t',y)$ in place of $(t,x)$.  Choose $(\hat{t},\hat{x})$ as in the definition of
$Z_{K,N,\xi}$ (recall $(t,x)\in Z_{K,N,\xi}$).  If $n\ge N$, then
\begin{eqnarray*}
d((t'_n,y_n),(\hat{t},\hat{x}))&\le&
d((t'_n,y_n),(t',y))+d((t',y),(t,x))+d((t,x),(\hat{t},\hat{x}))\\
 &\le& |t'_n-t'|^{\alpha}+|y-y_n|+2^{-N\alpha}+2^{-N\alpha}\\
&<& 4\cdot 2^{-N\alpha}< 2^{2-N\alpha}\\\
&<& 2^{-\alpha(N-2/\alpha)}<2^{-\alpha(N-m)}.
\end{eqnarray*}
Therefore $(t'_n,y_n)\in Z_{K,N-m,\xi}$, and similarly (and slightly more simply)
$(t_n,x_n)\in Z_{K,N-m,\xi}$.  Our definitions imply
that $t_N$ and $t'_N$ are equal or adjacent in $2^{-N}\IZ_+$ and similarly for
the components of $x_N$
and $y_N$ in $2^{-N\alpha}\IZ_+.$  This, together with the continuity of $\tilde X$, the 
triangle inequality,
and our lower bound on $N$ (which shows $N-m\ge (N_2\vee N_1)$), implies
\begin{eqnarray*}
|\tilde X(t,x)-\tilde X(t',y)|&\le& |\tilde X(t_N,x_N)-\tilde
X(t'_N,y_N)|\\
& &\qquad+\sum_{n=N}^\infty|\tilde X(t_{n+1},x_{n+1})-\tilde X(t_n,x_n)|
+|\tilde X(t'_{n+1},y_{n+1})-\tilde X(t'_n,y_n)|\\
&\le& M_{N,N-m,K}+\sum_{n=N}^\infty 2M_{n+1,N-m,K}\\
&\le& C\sum_{n=N}^\infty 2\cdot2^{-n\alpha (1/2-\alpha-\delta)}2^{-(N-m)p}\\
&\le& c_0(p)2^{-N(\alpha (1/2-\alpha-\delta)+p)}\\
&\le& 2^{-N\xi_1}.
\end{eqnarray*}
The last line is valid for $N\ge N_3$ because $\alpha (1/2-\alpha-\delta)+p>\xi_1$ by
(\ref{equt:10}). Here
$N_3$ is deterministic and may depend on $p,\xi_1,\delta,c_0$ and hence ultimately on
$\xi,\xi_1$.  This proves the required result with
\[N_{\xi_1}(\omega)=\max(N_2(\omega)+m,N_{\xi(\omega)}+m,[c_{\ref{equt:22}}(\xi,\delta_{1}) N_{\xi}]+m, N_3).
\]
Now fix  $R'=1\vee c_{\ref{equt:22}}(\xi,\delta_{1})$ and $N(K)\equiv N_3$ (deterministic).
Then if 
 \mbox{$N\geq 2m \vee N(K)$}, (\ref{N3bnd})
implies that
\begin{eqnarray*}
\IP(N_{\xi_1}\ge N)&\le&\IP(N_2\ge N-m)+2 \IP(N_\xi\ge N(1-m/N)/R')\\
&\le&
CK \eta_{N-m}+2 \IP(N_{\xi}\ge N/R),
\end{eqnarray*}
for $R=2R'$. This
 gives the required probability bound (\ref{Nbnd}).
 
 \gdm
 
 \section{Smooth kernels}
 \label{sec:smoothkernels}
The strong uniqueness results stated earlier had $\alpha>0$. We did not try to extend those arguments to the cases $\alpha=0$, because in that case a much simpler argument will serve. We present that in this section. 
 
If $\alpha=0$, then the SIE~\eqref{equt:cat6} is simply the SDE $dX_t=\sigma(X_t)\,dt$, and the classical Yamada-Watanabe result gives strong uniqueness for $\gamma\in[\frac12,1]$. But one can ask about more general SIE's, with a smooth but non-constant kernel, for which the latter result does not apply directly. That is the content of the following result. Note that this is the only result that in this paper that applies when $\gamma$ actually $=\frac12$. 

\begin{proposition} 
\label{prop:smoothcase}
Suppose that $\kappa(s,t)$ is a deterministic smooth positive function of variables $s\le t$, that 
is bounded away from 0. Let $\sigma$ satisfy~(\ref{eq:sigma_Hold}) for some  $\gamma\in [\frac12, 1]$. Then strong uniqueness holds for the stochastic integral equation 
\begin{equation}
\label{eq:smoothSIE}
X_t=x_0+\int_0^t\kappa(s,t)\sigma(X_s)\,dB_s.
\end{equation}
\end{proposition}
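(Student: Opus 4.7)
The plan is to exploit the smoothness of $\kappa$ to turn (\ref{eq:smoothSIE}) into a standard continuous semimartingale SDE with a path-dependent drift, and then apply the classical Yamada--Watanabe machinery from Section~\ref{section:auxiliary} combined with a Gronwall-type argument. Throughout let $\partial_2\kappa$ denote the partial derivative of $\kappa$ with respect to its second argument.

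First I would write $\kappa(s,t)=\kappa(s,s)+\int_s^t\partial_2\kappa(s,r)\,dr$, substitute this into (\ref{eq:smoothSIE}), and apply stochastic Fubini --- which is justified because $\kappa$ and $\partial_2\kappa$ are bounded on compacts, together with moment bounds on $\sigma(X)$ obtained by a Picard argument analogous to Proposition~\ref{prop:2} --- to obtain the semimartingale decomposition
\[
X_t=x_0+\int_0^t\kappa(s,s)\sigma(X_s)\,dB_s+\int_0^t A_r\,dr,\qquad A_r:=\int_0^r\partial_2\kappa(s,r)\sigma(X_s)\,dB_s.
\]
Thus $X$ has martingale part of quadratic variation $\int_0^{\cdot}\kappa(s,s)^2\sigma(X_s)^2\,ds$ and absolutely continuous drift with density $A_r$.

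Given two solutions $X^1,X^2$ driven by the same $B$, set $D_t=X^1_t-X^2_t$ and stop at $T_K=\inf\{t:|X^1_t|\vee|X^2_t|>K\}\wedge K$. I would apply It\^o's formula to $\phi_n(D_{t\wedge T_K})$ with the Yamada--Watanabe functions $\phi_n$ of Section~\ref{section:auxiliary}: the stochastic-integral term has zero expectation, the second-order It\^o contribution is $O_K(1/n)$ by property~(\ref{psicond}) of $\psi_n$ (using $|\sigma(X^1_s)-\sigma(X^2_s)|^2\le L^2|D_s|^{2\gamma}$ with $\gamma\ge\tfrac12$ and the bound $|D_{s\wedge T_K}|\le 2K$), and the drift contribution equals $\IE\int_0^{t\wedge T_K}\phi_n'(D_s)\tilde A_s\,ds$ where $\tilde A_s:=\int_0^s\partial_2\kappa(u,s)(\sigma(X^1_u)-\sigma(X^2_u))\,dB_u$. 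Letting $n\to\infty$ and writing $m(t):=\IE|D_{t\wedge T_K}|$, the Burkholder--Davis--Gundy inequality together with the boundedness of $\partial_2\kappa$ then yields an estimate of the form $m(t)\le C_K\int_0^t\bigl(\int_0^s m(u)\,du\bigr)^{1/2}\,ds$.

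The hard part will be closing this inequality to conclude $m\equiv 0$, since it a priori admits nonzero polynomial solutions. My plan is to combine the above $L^1$-bound with the direct $L^2$ estimate $\IE D_{t\wedge T_K}^2\le CL^2\int_0^t\IE|D_{s\wedge T_K}|^{2\gamma}\,ds$ obtained from It\^o isometry applied to the original Volterra representation of $D_t$, and iterate the two bounds against each other to bootstrap improved polynomial decay. Once $m\equiv 0$ is established up to $T_K$, sending $K\to\infty$ using the a.s.\ continuity of $X^1,X^2$ yields pathwise uniqueness on all of $\IR_+$.
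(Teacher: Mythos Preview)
Your semimartingale decomposition and the handling of the martingale and second-order It\^o terms are correct and match the paper. The gap is in the ``hard part'' you correctly flag: the bootstrap you propose cannot close. Write $m(t)=\IE|D_{t\wedge T_K}|$. Your $L^1$-bound gives
\[
m(t)\le C_1\int_0^t\Bigl(\int_0^s m(u)\,du\Bigr)^{1/2}ds,
\]
and combining your $L^2$ It\^o-isometry estimate with Cauchy--Schwarz (and $|D_s|^{2\gamma}\le (2K)^{2\gamma-1}|D_s|$) gives only $m(t)\le C_2(\int_0^t m(s)\,ds)^{1/2}$. Both inequalities are satisfied (for small $t$) by $m(t)=c\,t^3$ with $c=C_1^2/36$; indeed this is a fixed point of the iteration $\alpha_{n+1}=(\alpha_n+3)/2$ for the exponents, and the prefactors converge to a strictly positive limit rather than to zero. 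So no amount of iterating these two square-root inequalities against each other forces $m\equiv 0$.

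The paper avoids BDG entirely for the drift term. The key idea is to introduce the auxiliary process $\tilde Y_t=\int_0^t(\sigma(X^1_s)-\sigma(X^2_s))\,dB_s$ and integrate by parts: since $\tilde A_r=\int_0^r\partial_2\kappa(s,r)\,d\tilde Y_s=\partial_2\kappa(r,r)\tilde Y_r-\int_0^r\tilde Y_s\,\partial_{21}\kappa(s,r)\,ds$, one bounds $\IE|\tilde A_r|$ \emph{linearly} in $\sup_{s\le r}\IE|\tilde Y_s|$, with no square root. Likewise $\tilde X_t=\kappa(t,t)\tilde Y_t-\int_0^t\partial_1\kappa(s,t)\tilde Y_s\,ds$, which (using that $\kappa$ is bounded away from $0$) lets you control $\IE|\tilde Y_t|$ linearly by $\IE|\tilde X_t|$ plus a time-integral of $\IE|\tilde Y_s|$. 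Combining the two yields a genuine linear Gronwall inequality for $\sup_{s\le t}\IE|\tilde Y_{s\wedge T_K}|$, which forces it (and hence $\tilde X$) to vanish. Replace your BDG step by this integration-by-parts trick and the argument goes through.
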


\begin{proof}
Let $X^1_t$ and $X^2_t$ be solutions to~\eqref{eq:smoothSIE}. Set $Y^i_t=\int_0^t\sigma(X_s)\,dB_s$, so $X^i_t=x_0+\int_0^t \kappa(s,t)\,dY^i_s$. Therefore $dX^i_t=\kappa(t,t)\,dY^i_t + H^i_t\,dt$, where $H^i_t=\int_0^t\partial_2\kappa(s,t)\,dY^i_s$. 

Set $\tilde X_t=X^1_t-X^2_t$, $\tilde Y_t=Y^1_t-Y^2_t$, and $\tilde H_t=H^1_t-H^2_t$, so 
$d\tilde X_t=\kappa(t,t)\,d\tilde Y_t +\tilde H_t\,dt$.
In particular, for $\phi_n$ as in~\eqref{def:phi}, 
\begin{multline*}
\phi_n(\tilde X_t)=\int_0^t\phi_n'(\tilde X_s)\kappa(s,s)\,d\tilde Y_s+\int_0^t\phi_n'(\tilde X_s)\tilde H_s\,ds+\\
+\frac12\int_0^t\phi_n''(\tilde X_s)[\sigma(X^1_s)-\sigma(X^2_s)]^2\,ds.
\end{multline*}
Let $K>0$ and take $T_K$ to be the first time either $X^1_t$ or $X^2_t$ exceeds K. Recall that  $L$ is the H\"older constant for $\sigma$. Then the quadratic variation of the first term is bounded, so 
\begin{align*}
&\IE[\phi_n(\tilde X_{t\land T_K})]\\
&\qquad=\IE\Big[\int_0^t\phi_n'(\tilde X_s)1_{\{s<T_K\}}\tilde H_s\,ds
+\frac12\int_0^t\phi_n''(\tilde X_s)(\tilde X_s)1_{\{s<T_K\}}[\sigma(X^1_s)-\sigma(X^2_s)]^2\,ds\Big]\\
&\qquad\le \IE\Big[ \int_0^t |\phi_n'(\tilde X_s)\tilde H_s|1_{\{s<T_K\}}\,ds
+\frac{L^2}{2}\int_0^t|\phi_n''(\tilde X_s)|1_{\{s<T_K\}}|\tilde X_s|^{2\gamma}\,ds\Big]\\
&\qquad\le \int_0^t \IE[|\phi_n'(\tilde X_{s\land T_K})\tilde H_{s\land T_K}|]\,ds
+\frac{L^2}{2}\int_0^t\IE[|\phi_n''(\tilde X_{s\land T_K})||\tilde X_{s\land T_K}|^{2\gamma}]\,ds\equiv I_1^n+I_2^n.
\end{align*}
Then 
$$
I_2^n\le \frac{L^2}{n}\int_0^t \IE[|\tilde X_{s\land T_K}|^{2\gamma-1}]\le \frac{L^2t(2K)^{2\gamma-1}}{n}\to 0\quad\text{as $n\to\infty$.}
$$
Since
$$
\tilde H_t=\int_0^t\partial_2\kappa(s,t)\,d\tilde Y_s=\partial_2\kappa(t,t)\tilde Y_t-\int_0^t\tilde Y_s\partial_{21}\kappa(s,t)\,ds,
$$
we have
\begin{align*}
I_1^n&\le \int_0^t \IE[|\tilde H_{s\land T_K}|]\,ds\\
&\le \int_0^t |\partial_2\kappa(s,s)|\IE[|\tilde Y_{s\land T_K}|]\,ds
+ \int_0^t \int_0^s|\partial_{21}\kappa(q,s)|\IE[|\tilde Y_{q\land T_K}|]\,dq\,ds\\
&=\int_0^t \IE[|\tilde Y_{s\land T_K}|]\Big[|\partial_2\kappa(s,s)|+\int_s^t|\partial_{21}\kappa(s,q)|\,dq\Big]\,ds.
\end{align*}
Sending $n\to\infty$ gives that
\begin{equation}
\label{eq:I1estimate}
\IE[|\tilde X_{t\land T_K}|]\le \int_0^t \IE[|\tilde Y_{s\land T_K}|]\Big[|\partial_2\kappa(s,s)|+\int_s^t|\partial_{21}\kappa(s,q)|\,dq\Big]\,ds.
\end{equation}
Let $m_K(t)=\max_{s\le t}\IE[|\tilde Y_{s\land T_K}|]$. Since 
$$
\tilde X_t=\int_0^t\kappa(s,t)\,d\tilde Y_s=\kappa(t,t)\tilde Y_t-\int_0^t\partial_1\kappa(s,t)\tilde Y_s\,ds,
$$
we see that $|\kappa(t,t) \tilde Y_t|\le |\tilde X_t|+\int_0^t|\partial_1\kappa(s,t)\tilde Y_s|\,ds$. In combination with~\eqref{eq:I1estimate}, this shows that 
$$
\IE[|\tilde Y_{t\land T_K}|]\le \int_0^t \IE[|\tilde Y_{s\land T_K}|]\Big[\frac{|\partial_2\kappa(s,s)|+\int_s^t|\partial_{21}\kappa(s,q)\,dq
+|\partial_1\kappa(s,t)|}{|\kappa(t,t)|}\Big]\,ds.
$$
For any $t_0>0$, let $C(t_0)$ be the maximum of the above fraction, over $0\le s\le t\le t_0$. Therefore
\begin{equation}
\label{eq:Gronwall}
0\le m_K(t)\le C(t_0) \int_0^t m_K(s)\,ds
\end{equation}
for every $t\le t_0$. This is $\le C(t_0)tm_K(t)$, from which it follows that $m_K(t)=0$ for $t\in [0,\frac1{C(t_0)}]$. Applying~\eqref{eq:Gronwall} a second time now gives this for $t\in [0,\frac2{C(t_0)}]$. After finitely many iterations we have $m_K(t)=0$ on $[0,t_0]$, and since $t_0$ was arbitrary, in fact this holds for all $t\ge 0$. Sending $K\to\infty$ shows that for every $t$ we have $\tilde Y_t=0$ a.s., and therefore also $\tilde X_t=0$ a.s.
\end{proof}


\end{document}